\documentclass[pdflatex,sn-mathphys-num]{sn-jnl}
\usepackage[T1]{fontenc}
\usepackage{lmodern}
\ifdefined\pdfgentounicode
  \input{glyphtounicode}
\fi
\usepackage{amsmath,amssymb,amsthm}
\usepackage{mathtools}
\usepackage{tikz}
\theoremstyle{thmstyleone}
\newtheorem{theorem}{Theorem}[section]
\newtheorem{proposition}[theorem]{Proposition}
\newtheorem{lemma}[theorem]{Lemma}
\newtheorem{corollary}[theorem]{Corollary}
\theoremstyle{thmstyletwo}
\newtheorem{remark}[theorem]{Remark}

\newcommand{\R}{\mathbb{R}}
\newcommand{\C}{\mathbb{C}}
\DeclareMathOperator{\supp}{supp}

\DeclareMathOperator{\dist}{dist}
\DeclareMathOperator{\Mag}{Mag}
\newcommand{\Dmx}[1]{D_{\max}(#1)}
\newcommand{\indicator}[1]{\mathbf1_{#1}}

\begin{document}

\title[Maximum diversity on disks and balls]{Maximum diversity and support
transitions on disks and three-dimensional balls}

\author*[1]{\fnm{Adam} \sur{Dajani}}\email{Adam.Dajani@lonestar.edu}
\email{adamdajani4@gmail.com}
\affil*[1]{\orgname{Lone Star College--Montgomery},
  \orgaddress{\street{3200 College Park Drive}, \city{Conroe},
  \state{TX}, \postcode{77384-4500}, \country{USA}}}

\abstract{\unboldmath
For the similarity kernel $e^{-|x-y|}$, we determine the probability
measure of maximum diversity on every planar disk and
three-dimensional ball, answering a question of Leinster and Roff.
In each dimension the measure is uniform on the boundary up to a
threshold. Above it, the support is a closed central ball and
the outer sphere, separated by a mass-free annulus. In dimensions
two and three, their conjecture of finitely many supporting spheres
therefore holds only up to the respective threshold. We obtain the
measure explicitly up to one scalar root in three dimensions and
through Fredholm integral equations and a
scalar contact equation in the plane.
Computations in dimensions four and five suggest that interior
supporting spheres appear one at a time.
}

\keywords{Maximum diversity, maximum entropy, magnitude, equilibrium
measure, obstacle problem, Bessel capacity}
\pacs[MSC Classification (2020)]{Primary 31B15; Secondary 45B05, 45E10, 51F99}

\maketitle

\section{Introduction}

The maximum diversity of a compact metric space is the
largest diversity of a probability measure on it, for the similarity
kernel $e^{-d(x,y)}$ \cite{leinsterroff2021}. On a nonempty compact set
$E\subset\R^N$ with the Euclidean metric it is
\begin{equation}\label{eq:divintro}
\Dmx{E}=
\left(\min_{\substack{\mu\ge0,\ \mu(E)=1\\ \supp\mu\subset E}}
\iint e^{-|x-y|}\,d\mu(x)\,d\mu(y)\right)^{-1}.
\end{equation}
The reciprocal of the double integral is the diversity of order two of
$\mu$. A minimizer maximizes the diversity of every order
$q\in[0,\infty]$ at once, and the maximum value is independent of $q$
\cite[Theorem~7.1 and Corollary~7.3]{leinsterroff2021}, as Leinster and
Meckes first proved for finite spaces \cite{leinstermeckes2016}. Its
logarithm is the maximum entropy of $E$
\cite[Definition~7.2]{leinsterroff2021}.

Rotational symmetry on a ball leaves the distribution of mass over the
radii to be found. Leinster and Roff ask for the maximizing measure on a
ball and conjecture that its support is a finite union of concentric
spheres, their number depending on the
radius
\cite[\S10, Q~(2)]{leinsterroff2021}.\footnote{Their interval
example settles the one-dimensional case, where the maximizing measure
has full support \cite[Example~8.4]{leinsterroff2021}.}
Bouafia also says that the support on Euclidean balls is still unknown
\cite{bouafia2026}. Leinster reiterated the problem in 2024, as a possible application of spectral geometry and semiclassical analysis, reporting Gimperlein's suggestion that spectral geometry could advance the study of magnitude \cite{leinster2024magnitude}.
In this paper we determine the maximizing measure for every radius in dimensions two and three using a potential-theoretic approach.

\begin{theorem}\label{thm:summary}
Let $N\in\{2,3\}$, let $\mu_R$ be the unique maximizing measure
for $e^{-|x-y|}$ on $\overline B_R\subset\R^N$, and let $\sigma_R$ be
normalized surface measure on $\partial B_R$. Let $R_N$ be the unique
$R>0$ at which the potential of $\sigma_R$ at the center equals its
value on the sphere,
\[
e^{-R}=\int e^{-|Re_1-y|}\,d\sigma_R(y).
\]
Thus $R_2\approx1.364$ and $R_3\approx2.729$.
If $0<R\le R_N$, then $\mu_R=\sigma_R$, and $\Dmx{\overline B_R}$ is the
reciprocal of the boundary potential in the displayed equation.
If $R>R_N$, there is $a\in(0,R)$ such that
\[
\supp\mu_R=\overline B_a\cup\partial B_R.
\]
The measure has positive density throughout $B_a$ and positive mass on
$\partial B_R$. Its mass on $\partial B_a$ is positive in dimension
three and zero in dimension two. The open annulus $a<|x|<R$ has zero
mass, and there the potential $e^{-|\cdot|}*\mu_R$ exceeds its value
$1/\Dmx{\overline B_R}$ on the support.
\end{theorem}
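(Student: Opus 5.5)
The plan is to use the standard characterization of minimizers of a positive definite energy. The kernel $e^{-|x-y|}$ is the Bessel (Matérn) kernel with positive Fourier transform $c_N(1+|\xi|^2)^{-(N+1)/2}$, so the energy in \eqref{eq:divintro} is strictly convex on probability measures. Hence $\mu_R$ exists, is unique and is rotation invariant. A measure $\mu$ is the minimizer if and only if its potential $U\mu=e^{-|\cdot|}*\mu$ satisfies $U\mu\ge c$ on $\overline B_R$ with equality on $\supp\mu$, where $c=\iint e^{-|x-y|}d\mu\,d\mu=1/\Dmx{\overline B_R}$. I will verify these Frostman conditions for an explicit candidate in each regime, so uniqueness identifies $\mu_R$.

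The key tool is that $e^{-|x|}$ is a fundamental solution of $(1-\Delta)^{(N+1)/2}$ up to a constant. In $N=3$ this operator is the local operator $(1-\Delta)^2$. So on any open set free of mass, $U\mu$ is a radial solution of $(1-\Delta)^2u=0$, spanned by $\sinh r/r,\ \cosh r/r,\ e^{\pm r}$-type pieces, namely $e^{\pm r}/r$ and $e^{\pm r}$. In $N=2$ the operator $(1-\Delta)^{3/2}$ is nonlocal, so I would instead work directly with the potential of $\sigma_r$ expressed through modified Bessel functions $I_0,K_0,I_1,K_1$ via the Gegenbauer-type addition formula for $e^{-|x-y|}$ averaged over circles.

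\emph{Subcritical regime.} Let $f_R(s)=U\sigma_R(se_1)$ for $0\le s\le R$, which is radial and has an explicit formula. I will show that $f_R(s)\ge f_R(R)$ on $[0,R]$ if and only if $f_R(0)\ge f_R(R)$, i.e. $R\le R_N$. For $N=3$, $f_R$ solves $(1-\Delta)^2 f=0$ inside, so $f_R(s)=A\,\sinh s/s+B\cosh s$, with $A,B$ computed explicitly. I will then show that $f_R-f_R(R)$ changes sign at most once on $(0,R)$ and has the correct sign near $R$ by an endpoint derivative computation. This gives the claim, and $R_3$ is the root of $f_R(0)=f_R(R)$. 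For $N=2$ I will do the same with the Bessel series. I will also show that $h(R)=f_R(0)-f_R(R)$ has a single sign change, so that $R_N$ is unique.

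\emph{Supercritical regime, which is the main obstacle.} For $R>R_N$, $\sigma_R$ fails at the center, so mass must move inward. I posit the ansatz $\mu=\rho(|x|)\,dx\,\indicator{B_a}+\alpha\,\sigma_a+\beta\,\sigma_R$. For $N=3$, the equality condition on $B_a$ plus $(1-\Delta)^2U\mu=\kappa\mu$ forces $\rho$ to be constant in the interior, since $U\mu=c$ there. This makes the measure explicit, up to the scalar $a$, via matching conditions: $C^1$ contact at $r=a$ and equality at $r=R$. These conditions give the one scalar root. For $N=2$ the interior equation $U\mu=c$ on $B_a$ is a genuine Fredholm equation of the first kind for $\rho$. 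I would reduce it to a second-kind Fredholm equation through the $1/|x|$ singular part of the kernel expansion, prove solvability and positivity of $\rho$ by a maximum-principle/Neumann-series argument, and fix $a$ by the contact condition that $U\mu-c$ has a double zero at $a$. The vanishing of the $\partial B_a$ mass in 2D should follow from the order $3/2$ regularity, since a $\sigma_a$ atom would create a non-$C^1$ kink incompatible with a minimum at $a$.

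The hardest step is showing existence and uniqueness of the root $a\in(0,R)$ together with positivity of $\rho$, $\alpha$ and $\beta$, and the strict inequality $U\mu>c$ on $a<|x|<R$. I would attack it by continuity in $R$ starting at $R_N$, where $a=0$ bifurcates. On the annulus I would use the ODE structure in 3D, writing $U\mu-c$ as a combination of four exponentials with a double zero at $a$ and a zero at $R$, with a sign count to exclude further zeros. In 2D I would use monotonicity of the Bessel combinations, likely backed by rigorous numerics near the threshold.
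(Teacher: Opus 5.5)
The genuine gap is the supercritical regime in the plane. Your subcritical step is the paper's (Lemma~\ref{lem:sphere} and Proposition~\ref{prop:endpoints}; in the plane the paper uses a maximum principle for $U_\partial'$ rather than a series).

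Verifying the Frostman conditions identifies $\mu_R$ only after you have produced, for every $R>R_2$, an inner radius $a\in(0,R)$ with nonnegative density, positive outer mass, contact on $\overline B_a\cup\partial B_R$ and strict inequality on the annulus. You propose continuation from $R_2$ backed by numerics. Nothing in your plan prevents the branch from folding, losing the edge sign or acquiring a new contact radius, and numerics cannot cover $(R_2,\infty)$. Without this step you learn nothing about $\supp\mu_R$.

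The missing idea is the a priori classification of Proposition~\ref{prop:support}, which needs no formula for the measure.
\begin{itemize}
\item With $J=(1-\Delta)U=(N-1)L*\mu$, second differences of $U$ at an interior contact point, the bound $\delta^2_{h,e}K\le h^2$ and Fatou's lemma give $J\le\gamma$ there (Lemma~\ref{lem:contact}).
\item For $N=2,3$ one has $\Delta L>0$ off the origin. Take a mass-free annulus between interior contact radii, or a mass-free central ball bounded by a contact sphere. Lemma~\ref{lem:traces} supplies boundary values $J\le\gamma$, so the maximum principle gives $J<\gamma$ inside. Then $(1-\Delta)(U-\gamma)=J-\gamma<0$ contradicts $U-\gamma\ge0$ with zero boundary values.
\item With Lemma~\ref{lem:boundary} this forces $S=[0,a]\cup\{R\}$ and $U>\gamma$ on $a<|x|<R$ for the true minimizer.
\end{itemize}
In Theorem~\ref{thm:disk}, existence of a radius satisfying the selection conditions is then obtained by reading $a$ and $k$ off that minimizer. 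The converse verification rests on the Wronskian argument with convexity of $J$ in $\log r$ (Lemma~\ref{lem:annulus}), not on monotonicity of Bessel combinations.

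Two local steps are also wrong as stated.
\begin{itemize}
\item Mass on $\partial B_a$ in the plane creates no kink. $K$ is Lipschitz and $\sigma_a$ is atomless, so $K*\sigma_a\in C^1$, as for every atomless measure. The obstruction is second order: $L*\sigma_a=+\infty$ on the circle, so $\Delta U=U-J\to-\infty$ next to it and $J\le\gamma$ fails on the contact circle $\partial B_a$.
\item A double zero of $U-c$ at $a$ cannot select $a$, because it is automatic once $U=c$ on $\overline B_a$ and $U\in C^1$. In the plane the scalar conditions are contact at the center (which removes the $I_0$ mode and fixes $k$), contact on the outer circle, and nonnegativity of the Abel edge coefficient $g(a)$.
\end{itemize}

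In three dimensions your construction is the paper's Theorem~\ref{thm:ball} and needs no continuation. The root of $(R-b)P(b)+Q(b)=0$ in $(\beta,R_3)$ follows from the signs in Lemma~\ref{lem:roots} and the intermediate value theorem. The contact at $a$ is a triple zero of $w=r(U/c-1)$, and $w'''(0)$ is a positive multiple of the mass on $\partial B_a$, which is positive at that root.

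With that multiplicity your sign count works as an alternative to Lemma~\ref{lem:width}. Extended as an ODE solution, $w$ lies in the six-dimensional kernel of $\frac{d^2}{ds^2}\bigl(\frac{d^2}{ds^2}-1\bigr)^2$, so it has at most five real zeros counted with multiplicity. Also $w>0$ just after $0$, and $w\to-\infty$ as $s\to\infty$ because its leading coefficient is $-(a+2)/b$. Hence $b$ is its only positive zero. A double zero, as you wrote, would leave room for more.
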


Positive density on the central ball rules out a finite union of
supporting spheres for $R>R_N$. The theorem follows from Section~\ref{sec:support} and the
pointwise positivity proved in Theorem~\ref{thm:disk}.

\begin{figure}[h]
\centering
\resizebox{\linewidth}{!}{%
\begin{tikzpicture}
[font=\small,
callout/.style=
{anchor=west,align=left,font=\footnotesize}]
\begin{scope}
\draw[line width=1.2pt] (0,0) circle (1.5);
\draw[line width=0.6pt] (0,0) -- (-1.07,-1.07);
\node at (-0.66,-1) {$R$};
\draw[gray] (0.96,1.15) -- (1.7,1.6);
\node[callout] at (1.7,1.6) {$\partial B_R$\\uniform mass};
\node at (0,-2.05) {$R\le R_N$};
\end{scope}
\begin{scope}[xshift=6.2cm]
\fill[gray!25] (0,0) circle (0.75);
\draw[line width=0.8pt] (0,0) circle (0.75);
\draw[line width=1.2pt] (0,0) circle (1.5);
\draw[line width=0.4pt] (0,0) -- (-0.75,0);
\node at (-0.4,0.2) {$a$};
\draw[line width=0.6pt] (0,0) -- (-1.05,-1.05);
\node at (-0.66,-1) {$R$};
\draw[gray] (1,1.1) -- (1.7,1.7);
\node[callout] at (1.7,1.7) {$\partial B_R$\\positive mass};
\draw[gray] (0.72,0.19) -- (1.7,0.35);
\node[callout] at (1.7,0.35)
{$\partial B_a$\\positive mass if $N=3$\\zero mass if $N=2$};
\draw[gray] (0.3,-0.4) -- (1.7,-0.95);
\node[callout] at (1.7,-0.95) {$B_a$\\positive density};
\node at (0,-2.05) {$R>R_N$};
\end{scope}
\end{tikzpicture}%
}
\caption{Support of the maximizing measure for $R\le R_N$
and $R>R_N$. At $R=R_N$ the center is also a contact point,
but has zero mass.}\label{fig:support}
\end{figure}

Up to the threshold the maximum diversity of the ball is therefore that of its boundary sphere, which is the magnitude of the sphere
\cite[Example~8.5]{leinsterroff2021}. Willerton computed this magnitude
for the two-sphere with the Euclidean metric \cite[Theorem~12]{willerton2014}.

Above the threshold in dimension three, Theorem~\ref{thm:ball} expresses the measure and
maximum diversity in terms of the unique root of a scalar equation
for the annulus width. For large $R$ the leading term is $R^3/6$,
and the $R^2$ coefficient in \eqref{eq:largeR} is approximately $0.928$.
Barcel\'o and Carbery's magnitude formula is
$R^3/6+R^2+2R+1$ \cite[Theorem~2]{barcelocarbery2018}.

In the plane the interior density is real analytic, and
Theorem~\ref{thm:disk} determines it through integral
equations. The density diverges like the inverse square root of the
distance to its edge, and the inner radius grows linearly from zero at the
threshold. At the threshold, the maximum diversity departs from the magnitude
of the circle $\partial B_R$ only at second order.

For $N=4,5$, numerical evidence points to a
further range of radii with finitely many supporting spheres
(Appendix~\ref{app:tables}).

For a probability measure $\mu$ on $E$ put $U_\mu=e^{-|\cdot|}*\mu$ and
$I(\mu)=\int U_\mu\,d\mu$. We call a minimizer of $I$ over these
probability measures an equilibrium measure. It is the maximizing
measure, whose uniqueness was proved by Meckes
\cite[Proposition~8.8]{leinsterroff2021}. Its
equilibrium level $\gamma=I(\mu)$ equals $1/\Dmx{E}$, and the potential
$U_\mu$ is their typicality function
\cite[Definition~3.4]{leinsterroff2021}. A maximizing measure is
balanced, with $U_\mu=\gamma$ on its support
\cite[Proposition~6.3]{leinsterroff2021}, and $U_\mu\ge\gamma$ on $E$
\cite[Lemma~7.7(i)]{leinsterroff2021}. We call the latter the
equilibrium inequality, and $\{x\in E:U_\mu(x)=\gamma\}$ the
\emph{contact set}.

The support proof is the same for both ball problems. For the
equilibrium measure $\mu$ of a ball put $U=U_\mu$ and
$J=(1-\Delta)U=(N-1)(e^{-|\cdot|}/|\cdot|)*\mu$.
At an interior contact point $U$ has a local minimum, so formally
$\Delta U\ge0$ and $J=U-\Delta U\le\gamma$; the latter inequality
is proved in Lemma~\ref{lem:contact}.
In dimensions two and three $J$ is
strictly subharmonic on every open set disjoint from the support, so
the maximum principle, applied to $J$ and then to $U-\gamma$, excludes
a mass-free annulus between interior contact radii, and also a
mass-free central ball bounded by an interior contact sphere. An open
annulus next to the outer sphere is always mass-free, and together these force the
supercritical support and contact set in Proposition~\ref{prop:support} (Fig.~\ref{fig:support}).

To recover the density we invert convolution with $e^{-|\cdot|}$. In $\R^3$ the
identity $(1-\Delta)^2U=8\pi\mu$ is local and determines the constant density
on the central ball. In $\R^2$ the inverse is proportional to
$(1-\Delta)^{3/2}$, and the density depends on the potential outside
the central disk as well; Section~\ref{sec:disk} obtains it through a screened
charged disk problem.

Meckes's variational formula for the magnitude $\Mag(E)$, denoted $|E|$
by Leinster and Roff, is a supremum over signed measures; restricted to
probability measures it is equivalent to maximum diversity, so
$\Dmx{E}\le\Mag(E)$ \cite{meckes2013}, \cite[(14)]{leinsterroff2021}.
Meckes also expresses maximum diversity as a constant multiple of
the Bessel capacity of order $(N+1)/2$ \cite{meckes2015}. In that form
\[
\Dmx{E}=\frac1{c_N}\inf\Bigl\{\bigl\|(1-\Delta)^{(N+1)/4}f\bigr\|_{L^2}^2:
f\in H^{(N+1)/2}(\R^N),\ f\ge1\hbox{ on }E\Bigr\},
\]
with $c_N$ the constant in \eqref{eq:fourier}, $N!$ times the
volume of the unit ball. Maximum diversity is thus an obstacle problem for a higher-order
operator, which is a parallel Barcel\'o and Carbery point out
\cite[\S10]{barcelocarbery2018}. Its minimizer is $U_\mu/\gamma$,
with coincidence set equal to the contact set above.

Apart from finite spaces, the maximizing measures
Leinster and Roff compute come from positive weight measures \cite[Lemma~8.2]{leinsterroff2021}.
Three-dimensional Euclidean balls have no weight measure
\cite{leinstermeckes2017survey}, so their maximizing measure misses
part of the ball \cite[\S8]{leinsterroff2021}.
For $R>R_N$ the measure $\mu_R/\gamma$ is a
positive weight measure on its support $\overline B_a\cup\partial B_R$
\cite[Lemma~5.6]{leinsterroff2021}. By their Corollary~7.4(ii),
$\Dmx{\overline B_R}$ equals the magnitude of that support. So
Theorems~\ref{thm:ball} and~\ref{thm:disk} compute the
magnitude of sets with a positive weight measure, a class they find
apparently rare \cite[\S8]{leinsterroff2021}. Leinster and Roff's uniform measure on a
ball is normalized Lebesgue measure \cite[Theorem~9.9]{leinsterroff2021}.
In three dimensions the density is constant on the central ball, whose
radius is $R-O(1)$ and which contains all but $O(1/R)$ of the mass
(Remark~\ref{rem:limits}), so after rescaling to the unit ball the
maximizing measure approaches it in total variation at the rate
$O(1/R)$.

Spherical to volume transitions also occur for Riesz energies with
radial external fields \cite{cmsvw2024,csw2023}. The volume part in
\cite{csw2023} reaches the sphere; the exponential equilibrium on a
ball has an intervening mass-free annulus between these components.

\section{Kernel identities}\label{sec:kernels}

For $N\ge2$ put $K(x)=e^{-|x|}$ and $L(x)=e^{-|x|}/|x|$ for $x\ne0$;
when it occurs in a potential, the locally integrable function $L$ is
given the value $+\infty$ at zero. We use the Fourier transform
$\widehat h(\xi)=\int e^{-ix\cdot\xi}h(x)\,dx$.
By radial differentiation,
\begin{equation}\label{eq:kernels}
(1-\Delta)K=(N-1)L
\end{equation}
and, away from the origin,
\begin{equation}\label{eq:laplacian}
\Delta L(x)=\frac{e^{-|x|}}{|x|^3}
\bigl(|x|^2+(3-N)|x|+3-N\bigr).
\end{equation}
The first identity holds as distributions on $\R^N$; its classical
second derivatives are locally integrable, and the flux of the bounded
gradient of $K$ across a sphere of radius $\varepsilon$ tends to zero.
For $N=2,3$ the second expression is positive.

For $\alpha,\zeta>0$,
\begin{equation*}\tag{G}\label{eq:gauss}
\int_0^\infty t^{-1/2}e^{-\alpha t-\zeta/t}\,dt
=\sqrt{\frac\pi\alpha}\,e^{-2\sqrt{\alpha\zeta}},
\qquad
\int_0^\infty t^{-3/2}e^{-\alpha t-\zeta/t}\,dt
=\sqrt{\frac\pi\zeta}\,e^{-2\sqrt{\alpha\zeta}},
\end{equation*}
and by monotone convergence the first holds at $\zeta=0$ and the second
at $\alpha=0$. The substitution $t\mapsto\zeta/(\alpha t)$ turns the
first integral into $\sqrt{\zeta/\alpha}$ times the second, and
$u=\sqrt{\alpha t}-\sqrt{\zeta/t}$ turns $\sqrt\alpha$ times the first
plus $\sqrt\zeta$ times the second into
$2e^{-2\sqrt{\alpha\zeta}}\int_{\R}e^{-u^2}\,du$. With $\alpha=|x|^2$ and
$\zeta=1/4$ the second formula expresses $K$ as a positive superposition
of the Gaussians $e^{-t|x|^2}$, and Fourier transformation of a Gaussian
yields
\begin{equation}\label{eq:fourier}
\widehat K(\xi)=\frac{c_N}{(1+|\xi|^2)^{(N+1)/2}},
\qquad c_N=2^N\pi^{(N-1)/2}\Gamma\left(\frac{N+1}{2}\right).
\end{equation}
By Legendre's duplication formula $c_N$ is $N!$ times the volume of the
unit ball, the constant in \cite[Proposition~9.7]{leinsterroff2021}; in
particular $c_3=8\pi$.
The multiplier is positive and integrable, so by Fourier inversion and
Fubini, for every finite signed measure $\eta$,
\[
I(\eta)=\iint K(x-y)\,d\eta(x)\,d\eta(y)
=\frac1{(2\pi)^N}\int_{\R^N}
\frac{c_N|\widehat\eta(\xi)|^2}{(1+|\xi|^2)^{(N+1)/2}}\,d\xi.
\]
Zero energy means that the continuous function $\widehat\eta$ vanishes
everywhere, and testing against Schwartz functions then shows $\eta=0$.
Strict positive definiteness gives uniqueness of an energy minimizer;
on a ball it is rotation invariant.
For finite measures $\mu,\nu$ of equal mass, with $U_\mu=K*\mu$,
\begin{equation}\label{eq:variation}
I(\nu)-I(\mu)=I(\nu-\mu)+2\int U_\mu\,d(\nu-\mu).
\end{equation}
Hence $U_\mu\ge c$ on $E$ together with $U_\mu=c$ on the support
suffices for optimality among probability measures on $E$. For a
minimizing probability measure the two conditions are necessary as
well, with $c=I(\mu)$, by first variation against point masses and
continuity of the potential.

For $N=3$, \eqref{eq:fourier} reads
$\widehat K(\xi)=8\pi(1+|\xi|^2)^{-2}$, so
\begin{equation}\label{eq:green}
(1-\Delta)^2K=8\pi\delta_0\quad\hbox{in }\R^3.
\end{equation}

In the plane the first formula in \eqref{eq:gauss}, with $\alpha=|x|^2$
and $\zeta=1/4$, expresses $\sqrt\pi L$ as a superposition of the same
Gaussians, and Fourier transformation followed by the second formula,
with $\alpha=0$ and $\zeta=(1+|\xi|^2)/4$, yields
\[
\widehat L(\xi)=\frac{2\pi}{\sqrt{1+|\xi|^2}},\qquad
\widehat K(\xi)=\frac{2\pi}{(1+|\xi|^2)^{3/2}}.
\]
For $N=2$, \eqref{eq:kernels} and \eqref{eq:laplacian} read
\begin{equation}\label{eq:plane}
(1-\Delta)K=L,\qquad
\Delta L(x)=e^{-|x|}\left(\frac1{|x|}+\frac1{|x|^2}
  +\frac1{|x|^3}\right)>0\quad(x\ne0).
\end{equation}

\section{Support on balls}\label{sec:support}

Let $\mu$ minimize the exponential energy among probability measures on
$\overline B_R\subset\R^N$, where $R>0$ and $N\ge2$, and set
\[
U=K*\mu,\qquad \gamma=I(\mu),\qquad J=(N-1)L*\mu.
\]
By \eqref{eq:kernels}, $(1-\Delta)U=J$ as distributions. The integral defining $J$ is
nonnegative and belongs to $L^1(\R^N)$, though individual values may at
first be infinite. By the first variation
\begin{equation}\label{eq:contact}
U\ge\gamma\quad\hbox{on }\overline B_R,
\qquad U=\gamma\quad\hbox{on }\supp\mu.
\end{equation}
By uniqueness $\mu$ is rotation invariant. Denote its pushforward under
$x\mapsto|x|$ by $\mu_{\rm rad}$, put $S=\supp\mu_{\rm rad}$, and for
$t>0$ let $\sigma_t$ be normalized surface measure on $\partial B_t$. Then
\begin{equation}\label{eq:radial}
\mu=\mu_{\rm rad}(\{0\})\delta_0
  +\int_{(0,R]}\sigma_t\,d\mu_{\rm rad}(t),
\qquad U(r)=\gamma\quad(r\in S).
\end{equation}

\subsection{Boundary measure in dimension three}

Let $\sigma_R$ be normalized surface measure on
$\partial B_R\subset\R^3$, and put $U_\partial=K*\sigma_R$.
Define
\begin{equation}\label{eq:Upsilon}
\Upsilon(R)=2R^2e^{-R}+(1+2R)e^{-2R}-1.
\end{equation}

\begin{lemma}\label{lem:sphere}
For $R>0$,
\begin{equation}\label{eq:sphere}
U_\partial(r)=\frac{e^{-R}}R\left((1+R)\frac{\sinh r}{r}-\cosh r\right)
\quad(0<r\le R).
\end{equation}
Its center value is $e^{-R}$, and its boundary value is
\begin{equation}\label{eq:bR}
U_\partial(R)=\frac{1-(1+2R)e^{-2R}}{2R^2}.
\end{equation}
If $R\le2$, $U_\partial$ is strictly decreasing on $[0,R]$.
If $R>2$, it is strictly increasing up to a single point
$r_R\in(0,R)$ and strictly decreasing thereafter. Consequently,
\begin{equation}\label{eq:endpoints}
U_\partial(r)>\min\{U_\partial(0),U_\partial(R)\}\quad(0<r<R).
\end{equation}
\end{lemma}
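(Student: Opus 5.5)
The plan is to compute $U_\partial$ in closed form by reducing the surface integral to a one-dimensional integral in the distance variable, and then to study the sign of its derivative through a ratio of power series.

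\textbf{Closed form.} Fix $x$ with $|x|=r\in(0,R]$ and write $|x-y|^2=r^2+R^2-2rR\cos\theta$ for $y\in\partial B_R$. Normalized surface measure on $\partial B_R\subset\R^3$ pushes forward to $\tfrac12\,d(\cos\theta)$ on $[-1,1]$ (Archimedes). The substitution $s=|x-y|$, with $s\,ds=rR\,d(\cos\theta)$, then gives
\[
U_\partial(r)=\frac1{2rR}\int_{R-r}^{R+r}se^{-s}\,ds
=\frac{(1+R-r)e^{-(R-r)}-(1+R+r)e^{-(R+r)}}{2rR}.
\]
Grouping the $e^{\pm r}$ terms yields \eqref{eq:sphere}. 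The limit $r\to0$ gives $\tfrac{e^{-R}}{R}\bigl((1+R)-1\bigr)=e^{-R}$, and continuity of $U_\partial$ identifies this with the center value. Putting $r=R$ and using $2\sinh R\cosh R=\sinh 2R$ gives \eqref{eq:bR} after routine simplification.

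\textbf{Monotonicity.} Set $f(r)=(1+R)\sinh r/r-\cosh r$. Then
\[
r^2f'(r)=(1+R)g(r)-r^2\sinh r,\qquad g(r)=r\cosh r-\sinh r>0\quad(r>0).
\]
So $f'(r)>0$ if and only if $h(r)<1+R$, where $h=r^2\sinh r/g$. The key step is to show that $h$ is strictly increasing on $(0,\infty)$, with $h(0+)=3$ and $h\to\infty$. Expanding in series,
\[
r^2\sinh r=\sum_{j\ge0}\frac{r^{2j+3}}{(2j+1)!},\qquad g(r)=\sum_{j\ge0}\frac{2(j+1)\,r^{2j+3}}{(2j+3)!}.
\]
The ratio of corresponding coefficients is $(2j+3)(2j+2)/(2j+2)=2j+3$, which is strictly increasing in $j$. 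The classical monotonicity lemma for quotients of power series with positive coefficients (Biernacki--Krzy\.z) then shows that $h$ is strictly increasing. The value $h(0+)=3$ comes from the leading coefficients, and $h\to\infty$ because $h(r)\sim r$.

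Hence if $R\le2$ we have $h>3\ge1+R$ on $(0,R]$, so $f'<0$ and $U_\partial$ is strictly decreasing on $[0,R]$; continuity at $0$ handles the endpoint. If $R>2$, then $h-(1+R)$ has exactly one zero $r_R$, with $f'>0$ before it and $f'<0$ after it.

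\textbf{Location of $r_R$ and the main obstacle.} To see that $r_R<R$, I need $h(R)>1+R$. This is equivalent to $(R^2+R+1)\sinh R>R(1+R)\cosh R$, that is,
\[
\tanh R>\frac{R(1+R)}{R^2+R+1}=1-\frac1{R^2+R+1}.
\]
I expect this elementary inequality to be the main (mild) obstacle, since it must hold for all $R>2$. I would first try cross-multiplying it to
\[
e^{2R}>1+2R+2R^2,
\]
which follows from the Taylor series of $e^{2R}$, because the cubic term $\tfrac{(2R)^3}{6}$ is positive. Alternatively, $f'(R)<0$ can be read off from the $s$-integral representation by direct differentiation.

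\textbf{Conclusion.} In both cases $U_\partial$ on $[0,R]$ is either monotone or unimodal with its maximum at an interior point. A continuous function of this shape exceeds the smaller of its endpoint values at every interior point, which gives \eqref{eq:endpoints}.
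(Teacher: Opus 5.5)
Your proposal is correct and follows the paper's proof. It uses the same distance substitution, the same reduction of the sign of $U_\partial'$ to a monotone ratio (your $h$ is the reciprocal of the paper's $\ell(r)=(r\coth r-1)/r^2$), and the same endpoint inequality $e^{2R}>1+2R+2R^2$. The only difference is that you obtain monotonicity from the Biernacki--Krzy\.z lemma, with coefficient ratios $2j+3$, whereas the paper writes $\ell'$ as an explicit negative series.
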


\begin{proof}
For $0<r\le R$ the angle between $re_1$ and a
$\sigma_R$-distributed point has cosine uniform on $[-1,1]$, and
after changing variables to the distance $u$ between the points,
\[
U_\partial(r)=\frac1{2rR}\int_{R-r}^{R+r}ue^{-u}\,du,
\]
which is \eqref{eq:sphere}.
The center value follows by continuity, and \eqref{eq:bR} is the
case $r=R$.

For the extrema introduce
$\ell(r)=(r\coth r-1)/r^2$ for $r>0$. Differentiating,
\[
\begin{aligned}
U_\partial'(r)&=\frac{e^{-R}}R\sinh r\bigl((1+R)\ell(r)-1\bigr),
\qquad \ell(0^+)=\tfrac13,\\
\ell'(r)&=-\frac1{4r^3\sinh^2r}
\sum_{n\ge3}\frac{(2n-4)(2r)^{2n}}{(2n)!}<0.
\end{aligned}
\]
Thus $(1+R)\ell(r)-1$ decreases strictly from $(R-2)/3$, and
\[
R^2\bigl((1+R)\ell(R)-1\bigr)
=\frac{2R(1+R)}{e^{2R}-1}-1<0,
\]
since $e^{2R}-1>2R+2R^2$.
Hence for $R\le2$ we have $U_\partial'<0$ throughout $(0,R]$, while for
$R>2$ there is one critical point $r_R\in(0,R)$,
with the asserted monotonicity on either side.
On the increasing part $U_\partial(r)>U_\partial(0)$; on the decreasing
part with $r<R$ we have $U_\partial(r)>U_\partial(R)$, and
\eqref{eq:endpoints} follows.
\end{proof}

\begin{proposition}\label{prop:sphere}
The function $\Upsilon$ has a unique positive zero
$R_3$, is positive on
$(0,R_3)$, and is negative on $(R_3,\infty)$.
Normalized surface measure $\sigma_R$ is the unique equilibrium
probability measure on $\overline B_R\subset\R^3$ if and only if
$R\le R_3$. In that range its equilibrium level is $U_\partial(R)$.
For $R<R_3$, the contact set is $\partial B_R$;
at $R=R_3$, it is $\partial B_R\cup\{0\}$.
For $R>R_3$, the equilibrium has positive mass in $B_R$.
\end{proposition}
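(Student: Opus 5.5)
The plan is to reduce everything to the sign of $U_\partial(0)-U_\partial(R)$. By Lemma~\ref{lem:sphere}, $U_\partial(0)=e^{-R}$ and $U_\partial(R)$ is given by \eqref{eq:bR}. Expanding,
\[
U_\partial(0)-U_\partial(R)=\frac{2R^2e^{-R}+(1+2R)e^{-2R}-1}{2R^2}=\frac{\Upsilon(R)}{2R^2},
\]
so $\Upsilon$ has the sign of the center value minus the boundary value.

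\emph{Sign pattern of $\Upsilon$.} We have $\Upsilon(0^+)=0$ and $\Upsilon(R)\to-1$ as $R\to\infty$. Differentiating and factoring gives
\[
\Upsilon'(R)=2Re^{-R}\,g(R),\qquad g(R)=2-R-2e^{-R}.
\]
Here $g(0)=0$ and $g'(R)=2e^{-R}-1$. So $g$ increases on $(0,\log 2)$ and decreases strictly afterwards to $-\infty$. Hence $g$ has exactly one positive zero $\rho$, with $g>0$ on $(0,\rho)$ and $g<0$ on $(\rho,\infty)$. Consequently $\Upsilon$ increases strictly on $(0,\rho]$ from $0$, so it is positive there. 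It then decreases strictly on $[\rho,\infty)$ towards $-1$. This yields a unique positive zero $R_3>\rho$, with $\Upsilon>0$ on $(0,R_3)$ and $\Upsilon<0$ beyond. Only this elementary calculus is needed; no real obstacle arises.

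\emph{Equilibrium characterization.} The measure $\sigma_R$ is a probability measure on $\overline B_R$, and $U_\partial$ is constant on its support $\partial B_R$. By \eqref{eq:variation} and the remarks after it, $\sigma_R$ is the (unique) equilibrium measure iff $U_\partial\ge U_\partial(R)$ on $\overline B_R$, and then the level is $U_\partial(R)$. The necessity direction uses that a minimizer satisfies \eqref{eq:contact} with $\gamma=I(\sigma_R)=U_\partial(R)$. Since $U_\partial$ is radial, \eqref{eq:endpoints} says that on $(0,R)$ it strictly exceeds $\min\{U_\partial(0),U_\partial(R)\}$. Therefore the inequality $U_\partial\ge U_\partial(R)$ on the ball is equivalent to $U_\partial(0)\ge U_\partial(R)$, that is, to $\Upsilon(R)\ge0$, that is, to $R\le R_3$.

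\emph{Contact set.} Let $R\le R_3$. By \eqref{eq:endpoints}, $U_\partial(r)>U_\partial(R)$ for $0<r<R$, since the minimum there is $U_\partial(R)$. Hence the contact set consists of $\partial B_R$, together with the center exactly when $U_\partial(0)=U_\partial(R)$. By the sign analysis this equality holds exactly when $R=R_3$.

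\emph{Mass in the open ball for $R>R_3$.} Here $\sigma_R$ is not the equilibrium. Suppose the equilibrium $\mu$ had no mass in $B_R$. Then by \eqref{eq:radial} we would have $\mu_{\rm rad}=\delta_R$, hence $\mu=\sigma_R$, a contradiction. The main care point in the whole argument is simply invoking the necessity half of the first-variation conditions correctly, at the center.
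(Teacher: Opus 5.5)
Your proof is correct and follows the paper's argument. Both use the same factorization of $\Upsilon'$, the identity $U_\partial(0)-U_\partial(R)=\Upsilon(R)/(2R^2)$ together with \eqref{eq:endpoints}, and the variational criterion. The differences are only cosmetic: you cite the general first-variation necessity where the paper writes out the point-mass perturbation at the center, and you identify a sphere-supported equilibrium with $\sigma_R$ through rotation invariance \eqref{eq:radial} rather than through the uniqueness of $\sigma_R$ among sphere-supported probabilities.
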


\begin{proof}
Differentiating,
\[
\Upsilon'(R)=2Re^{-R}\bigl(2-R-2e^{-R}\bigr).
\]
The last factor starts at zero with positive derivative and is strictly
concave, tending to $-\infty$, so it crosses zero once on the positive
axis; $\Upsilon$ therefore increases from zero and then decreases to
$-1$, which proves the assertions about $R_3$.
By Lemma~\ref{lem:sphere},
\begin{equation}\label{eq:crossing}
U_\partial(0)-U_\partial(R)=\frac{\Upsilon(R)}{2R^2}.
\end{equation}
If $R\le R_3$, this comparison and
\eqref{eq:endpoints} imply $U_\partial\ge U_\partial(R)$ on the ball,
strictly on $B_R\setminus\{0\}$, with equality at the center only at
$R=R_3$. Optimality and the stated contact set follow from
the variational criterion in Section~\ref{sec:kernels}.

For $R>R_3$ move a mass $s>0$ from the sphere to the center.
The energy change is
\[
I\bigl((1-s)\sigma_R+s\delta_0\bigr)-I(\sigma_R)
=s^2I(\delta_0-\sigma_R)+2s\bigl(U_\partial(0)-U_\partial(R)\bigr)<0
\]
for sufficiently small $s>0$, so $\sigma_R$ fails to minimize on the
ball. Among probabilities supported on the sphere it does minimize,
uniquely, because its potential is constant on the sphere and the
kernel is strictly positive definite. An equilibrium with zero mass in
$B_R$ would then equal $\sigma_R$, contradicting the perturbation.
\end{proof}

\subsection{Boundary measure in dimension two}

Here $N=2$. Put $U_\partial=K*\sigma_R$ and $j_R=L*\sigma_R$.

\begin{proposition}\label{prop:endpoints}
For every $R>0$,
$U_\partial(r)\ge\min\{U_\partial(0),U_\partial(R)\}$ on $[0,R]$.
\end{proposition}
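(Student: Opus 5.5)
We aim to show that $U_\partial$, viewed as a function of $r=|x|$ on $[0,R]$, has no interior local minimum below both endpoint values. The tool is the planar identity \eqref{eq:plane}. Away from $\partial B_R$ we have $(1-\Delta)U_\partial=j_R$ with $j_R=L*\sigma_R$, and $j_R$ is strictly subharmonic in $B_R$, since $\Delta L>0$ off the origin and $\sigma_R$ puts no mass in the open disk. Inside $B_R$ both $U_\partial$ and $j_R$ are smooth and radial. Because the disk contains no charge, the subharmonic radial function $j_R$ is nondecreasing in $r$ on $[0,R)$. Indeed, radial subharmonicity gives $(rj_R')'\ge0$, hence $rj_R'\ge0$; in fact $rj_R'>0$ for $r>0$.

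Step one is to turn this into an ODE statement about $u(r)=U_\partial(r)$. The equation reads
\[
u''+\tfrac1r u'=u-j_R .
\]
Suppose $u$ attains its minimum over $[0,R]$ at an interior point $r_0\in(0,R)$, or at $0$. If the minimum is at $0$ or at $R$, the claim is immediate, so assume it is at $r_0\in(0,R)$ and that $u(r_0)<\min\{u(0),u(R)\}$. At $r_0$ we have $u'(r_0)=0$ and $u''(r_0)\ge0$, so $u(r_0)\ge j_R(r_0)$.

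Step two is to track the function $w=u-j_R$. We have $\Delta u=w$ and
\[
\Delta w=\Delta u-\Delta j_R=w-\Delta j_R<w .
\]
Let $r_1<r_0$ be the largest point in $[0,r_0)$ where $u'$ changes sign. Such a point exists because $u$ decreases somewhere before $r_0$, given $u(0)>u(r_0)$. On $(r_1,r_0)$ the function $u$ is decreasing, so $(ru')'=rw$ must integrate from $0$ up to $0$. The plan is to get a contradiction from the monotonicity of $j_R$ together with the supersolution inequality for $w$. Concretely, I would consider the first interior local maximum $r_1$ of $u$ below $r_0$. There $u''(r_1)\le0$ gives $u(r_1)\le j_R(r_1)\le j_R(r_0)\le u(r_0)$. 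This contradicts $u(r_1)>u(r_0)$, since $u$ decreases strictly from $r_1$ to $r_0$. If no interior maximum exists before $r_0$, then $u$ is monotone on $[0,r_0]$, so it decreases there. The point $r_1=0$ then plays the same role: at the center, $\Delta u(0)=2u''(0)\le0$ gives $u(0)\le j_R(0)\le j_R(r_0)\le u(r_0)$, which is again a contradiction.

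The main obstacle is justifying the regularity and monotonicity claims near $r=R$ and at the center. Two points need care. First, $j_R$ is finite and smooth in the open disk, although it blows up logarithmically at $\partial B_R$; this is harmless, because the argument only uses points $r<r_0<R$. Second, $j_R$ is nondecreasing in $r$, which follows from the mean value property of subharmonic functions on concentric circles. If subharmonicity gave only weak inequalities, strictness would come from $\Delta L>0$. This argument never uses the endpoint $R$, so it shows that $u$ has no interior local minimum with a local maximum or the center to its left at a greater value. Hence on $[0,R]$ the function $u$ is either monotone, or increasing and then decreasing. In every case the minimum is attained at an endpoint, which is the proposition.
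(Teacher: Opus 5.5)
Your argument is correct, and it takes a different route from the paper. The paper differentiates the radial equation and works with $w=U_\partial'$, which satisfies $w''+w'/r-(1+r^{-2})w=-j_R'<0$ with $w(0)=0$. A negative minimum of $w$ on a component of $\{w<0\}$ is then impossible, so once $U_\partial'$ turns negative it stays negative. You instead work at zeroth order. At an interior minimum $r_0$ the second-derivative test gives $u(r_0)\ge j_R(r_0)$. At a maximum $r_1<r_0$ it gives $u(r_1)\le j_R(r_1)$; at the center this uses $\Delta U_\partial(0)=2u''(0)$. Monotonicity of $j_R$ chains these into $u(r_1)\le u(r_0)$, contradicting $u(r_1)\ge u(0)>u(r_0)$.

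Your version needs only that $j_R$ is nondecreasing, where the paper uses $j_R'>0$ in the differentiated equation, and you never differentiate the equation at all. The paper's version yields in one stroke that $U_\partial$ is monotone or increasing then decreasing, which mirrors Lemma~\ref{lem:sphere}. Your argument gives that too if you run it with $r_0$ a minimizer over an arbitrary subinterval $[r_a,r_c]$ satisfying $u(r_a)>u(r_0)$. That justifies your closing sentence, which does not follow from the global-minimum case alone; the proposition itself does not need it.

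One point of presentation: choose $r_1$ to be a maximizer of $u$ on $[0,r_0]$. Then $u(r_1)\ge u(0)>u(r_0)$ forces $r_1\ne r_0$, and the cases $r_1\in(0,r_0)$ and $r_1=0$ are exactly your two cases. This avoids two claims you do not prove and the contradiction does not need: that a largest sign change of $u'$ below $r_0$ exists, and that $u$ decreases strictly on $(r_1,r_0)$. The material on $w=u-j_R$ in Step two and the remark about integrating $(ru')'$ are unused and can be dropped.
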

\begin{proof}
The potentials are smooth in $B_R$. By
\eqref{eq:plane}, $(1-\Delta)U_\partial=j_R$ and
$(rj_R'(r))'=r\Delta j_R(r)>0$ for $0<r<R$, and smooth radiality
requires $j_R'(0)=0$, so $j_R'(r)>0$ throughout this interval. Put
$w=U_\partial'$ and differentiate the radial equation,
\[
w''+\frac1r w'-\left(1+\frac1{r^2}\right)w=-j_R'<0,
\quad 0<r<R,
\]
with $w(0)=0$.
Suppose $w$ were negative somewhere and later nonnegative. Then the
component of $\{w<0\}$ containing that point would have its right
endpoint at most $R$ and its left endpoint in $[0,R)$, with zero values at
both endpoints, the origin included if necessary, and $w$ would attain
a negative minimum at some $r_0\in(0,R)$, where the displayed lefthand
side would be positive. So once $w$ becomes negative it stays negative.
The kernel is continuous, so $U_\partial$ is continuous at $R$.
Its minimum on $[0,R]$ is therefore attained at an endpoint.
\end{proof}

\begin{proposition}\label{prop:circle}
There is a unique positive solution $R_2$ of
$e^{-R}=\frac2\pi\int_0^{\pi/2}e^{-2R\sin t}\,dt$.
Normalized
arc length $\sigma_R$ is the unique equilibrium probability
measure for $e^{-|x-y|}$ on $\overline B_R\subset\R^2$ if and only
if $0<R\le R_2$.
\end{proposition}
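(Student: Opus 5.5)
The proof will follow the three-dimensional Proposition~\ref{prop:sphere}, with Proposition~\ref{prop:endpoints} in place of Lemma~\ref{lem:sphere}.

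\emph{Step 1: the two values of $U_\partial$.} The center value is $U_\partial(0)=e^{-R}$. For the boundary value, parametrize $\partial B_R$ by the angle $\theta$. The distance from $Re_1$ to the point at angle $\theta$ is $2R\sin(\theta/2)$. Symmetry and the substitution $t=\theta/2$ then give
\[
U_\partial(R)=\frac2\pi\int_0^{\pi/2}e^{-2R\sin t}\,dt.
\]
So the defining equation says exactly $U_\partial(0)=U_\partial(R)$. Put $\Phi(R)=e^{-R}-\frac2\pi\int_0^{\pi/2}e^{-2R\sin t}\,dt$, so that $\Phi(0)=0$. Here I will show:
\begin{itemize}
\item $\Phi>0$ for small $R$;
\item $\Phi<0$ for large $R$;
\item $\Phi$ has exactly one positive zero.
\end{itemize}
For small $R$, expand to second order. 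The integral is $1-\frac{4R}{\pi}+R^2+O(R^3)$, using the mean of $\sin^2$ on $[0,\pi/2]$, which is $1/2$. Since $e^{-R}=1-R+\frac{R^2}{2}+O(R^3)$, we get $\Phi(R)=(\frac4\pi-1)R+O(R^2)>0$. For large $R$, Laplace's method near $t=0$ gives an integral of size about $\frac1{\pi R}$, which dominates $e^{-R}$, so $\Phi<0$.

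\emph{Uniqueness of the zero.} This is the main obstacle. I would first try the scaled function $e^{R}\Phi(R)=1-\frac2\pi\int_0^{\pi/2}e^{R(1-2\sin t)}\,dt$. Uniqueness of its zero is not immediate, because the exponent $1-2\sin t$ is positive for $t<\pi/6$ and negative for $t>\pi/6$. The function $g(R)=\frac2\pi\int_0^{\pi/2} e^{R(1-2\sin t)}\,dt$ is strictly convex in $R$ and satisfies $g(0)=1$. Its derivative at zero is $g'(0)=\frac2\pi\int_0^{\pi/2}(1-2\sin t)\,dt=1-\frac4\pi<0$, and $g\to\infty$. A strictly convex function with these properties crosses the level $1$ exactly once on $(0,\infty)$. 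Hence $\Phi$ has a unique positive zero $R_2$, with $\Phi>0$ on $(0,R_2)$ and $\Phi<0$ beyond. This settles the existence and uniqueness of $R_2$, and the convexity argument is cleaner than a sign analysis of $\Phi'$.

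\emph{Step 2: optimality for $R\le R_2$.} On this range $U_\partial(0)\ge U_\partial(R)$. Proposition~\ref{prop:endpoints} then gives $U_\partial\ge U_\partial(R)$ on $[0,R]$. Rotation invariance makes $U_\partial$ constant on the support $\partial B_R$. The variational criterion after \eqref{eq:variation} therefore makes $\sigma_R$ a minimizer, and strict positive definiteness makes it the unique one.

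\emph{Step 3: failure for $R>R_2$.} Here $U_\partial(0)<U_\partial(R)$. Compare $(1-s)\sigma_R+s\delta_0$ with $\sigma_R$, as in the proof of Proposition~\ref{prop:sphere}. The energy change is
\[
s^2I(\delta_0-\sigma_R)+2s\bigl(U_\partial(0)-U_\partial(R)\bigr),
\]
which is negative for small $s>0$. So $\sigma_R$ is not the equilibrium measure. This completes the equivalence.
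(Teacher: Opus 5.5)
Your proof is correct and is essentially the paper's. The paper also reduces the equilibrium inequality to $U_\partial(0)\ge U_\partial(R)$ via Proposition~\ref{prop:endpoints}, and it locates $R_2$ by the same strict convexity of $e^{R}U_\partial(R)$, using its value $1$ at $R=0$, its negative slope $1-4/\pi$ there, and its growth to infinity; for $R>R_2$ it invokes the necessity half of the variational criterion of Section~\ref{sec:kernels}, which rests on exactly your point-mass perturbation, so your separate small-$R$ expansion is redundant once the convexity argument is in place.
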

\begin{proof}
The potential is constant on the circle, with value
$U_\partial(R)$, and
\[
U_\partial(0)=e^{-R},\qquad
U_\partial(R)=\frac2\pi\int_0^{\pi/2}e^{-2R\sin t}\,dt
=I_0(2R)-\mathbf L_0(2R).
\]
Here $I_0$ is the modified Bessel function of the first kind and order
zero and $\mathbf L_0$ the modified Struve function of order zero
\cite[(11.2.6), (11.5.5)]{dlmf}.
By Proposition~\ref{prop:endpoints} the equilibrium inequality is equivalent
to $U_\partial(0)\ge U_\partial(R)$. To locate the threshold put
\[
\chi(R)=e^R U_\partial(R)=\frac2\pi\int_0^{\pi/2}e^{R(1-2\sin t)}\,dt.
\]
Then $\chi(0)=1$, $\chi'(0)=1-4/\pi<0$, and $\chi''>0$ on $[0,\infty)$,
while the integral over $0\le t\le\pi/12$ tends to infinity, so
$\chi(R)\to\infty$; strict convexity forces a unique positive crossing
of $\chi=1$, and $\chi(R)\le1$ if and only if $R\in[0,R_2]$.
The variational criterion of Section~\ref{sec:kernels}
finishes the proof.
\end{proof}

\subsection{Interior contact estimates}

\begin{remark}\label{rem:process}
The conjecture suggested examining what a finite
collection of supporting spheres would leave empty. Once a central
atom is excluded such a configuration leaves a mass-free ball
around the origin. We sought a maximum principle argument to test
whether this is compatible with equilibrium. The auxiliary potential $J=(1-\Delta)U$ gives a second-order
argument for the support without a formula for the measure. Pursuing that avenue requires the interior contact bound $J \leq \gamma$, which Lemma~\ref{lem:contact} proves through second differences, not needing to assume the measure has a density.
\end{remark}

\begin{lemma}\label{lem:contact}
For every $N\ge2$ and every $x\in B_R$ with $U(x)=\gamma$, one has
$J(x)\le\gamma$. The measure $\mu$ is atomless. When $N=2$, it also
satisfies $\mu_{\rm rad}(\{t\})=0$ for every $0<t<R$.
\end{lemma}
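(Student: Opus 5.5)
Since $U\ge\gamma$ on $\overline B_R$ and $U(x)=\gamma$ at an interior point $x$, the function $U$ has a local minimum at $x$. I will make the formal inequality $\Delta U(x)\ge0$ rigorous through spherical means instead of pointwise derivatives. For small $\rho>0$ with $\overline B_\rho(x)\subset B_R$, let $M_\rho U(x)$ denote the average of $U$ over $\partial B_\rho(x)$. Then $M_\rho U(x)-U(x)\ge0$. I will compute this second difference by Fubini, applying it to the kernel:
\[
M_\rho U(x)-U(x)=\int \bigl(M_\rho K(x-y)-K(x-y)\bigr)\,d\mu(y).
\]
For the kernel itself, the identity $(1-\Delta)K=(N-1)L$ from \eqref{eq:kernels} and Green's formula for spherical means give
\[
M_\rho K(z)-K(z)=\int_{B_\rho(z)} G_\rho(z,w)\,\Delta K(w)\,dw,
\]
with a nonnegative weight $G_\rho$ of total mass $\rho^2/(2N)$. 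Here $\Delta K=K-(N-1)L$, so the $L$ part enters with a negative sign. Dividing by $\rho^2/(2N)$, I obtain
\[
0\le\frac{2N}{\rho^2}\bigl(M_\rho U(x)-U(x)\bigr)=A_\rho(x)-B_\rho(x),
\]
where $A_\rho=\int\!\!\int \tilde G_\rho K\,dw\,d\mu$ and $B_\rho=(N-1)\int\!\!\int\tilde G_\rho L\,dw\,d\mu$, and $\tilde G_\rho$ is a probability weight concentrating at $x$.

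Next I pass to the limit $\rho\to0$. The term $A_\rho\to U(x)=\gamma$ because $K$ is continuous and bounded. The term $B_\rho$ is a weighted local average of $J$ near $x$. Since $L$ is lower semicontinuous and positive, Fatou's lemma gives $\liminf B_\rho\ge J(x)$, including the case $J(x)=+\infty$. Hence $J(x)\le\liminf B_\rho\le\lim A_\rho=\gamma$. The main technical point is justifying that $\tilde G_\rho(\cdot)\to\delta_x$ in a way that allows Fatou for each fixed $y$. I would check that $w\mapsto\int\tilde G_\rho(x,w)L(w-y)\,dw$ has $\liminf\ge L(x-y)$ for every $y$, including $y=x$, where it tends to $+\infty$. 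This holds because $\tilde G_\rho$ is radial and nonnegative, so superharmonic-type averaging of the singularity $|w-y|^{-1}$, which is superharmonic for $N\ge3$, or direct estimation in the plane, keeps it large. I expect this uniform lower bound to be the main obstacle.

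The finiteness $J(x)\le\gamma<\infty$ then yields the remaining claims. Suppose $\mu$ had an atom of mass $m$ at a point $p$. Rotation invariance forces $p=0$, since otherwise there would be infinitely many atoms. If $0\in B_R$ lies in $\supp\mu$, then $U(0)=\gamma$ and $J(0)\ge(N-1)mL(0)=\infty$, a contradiction. An atom on $\partial B_R$ is excluded by rotation invariance, so $\mu$ is atomless. For $N=2$ and $0<t<R$, suppose $\mu_{\rm rad}(\{t\})=m>0$. Then $\partial B_t\subset\supp\mu$ lies in the contact set. At $x\in\partial B_t$ we have $J(x)\ge m\,(L*\sigma_t)(x)$. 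In the plane $L*\sigma_t$ diverges on $\partial B_t$, because near $x$ the arc contributes $\int e^{-s}/s\,ds$ over arclength $s$, which is logarithmically divergent. This contradicts $J(x)\le\gamma$. The same computation gives a finite value when $N\ge3$, which explains why this last claim is restricted to the plane.
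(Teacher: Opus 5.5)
Your proof is correct, but it reaches $J(x)\le\gamma$ by a different mechanism than the paper.

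The paper takes coordinate second differences $\delta^2_{h,e_i}U(x)\ge0$. It makes Fatou's lemma applicable through the semiconcavity bound $\delta^2_{h,e}K\le h^2$, read off from the Hessian eigenvalues $e^{-|z|}$ and $-e^{-|z|}/|z|$ and from the downward kink on lines through zero. This gives $A_h=N-h^{-2}\sum_i\delta^2_{h,e_i}K\ge0$, whose pointwise limit $N-K+(N-1)L$ yields $N-U(x)+J(x)\le N$.

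You average over spheres instead. The Green representation of $M_\rho K-K$ with $\Delta K=K-(N-1)L$ splits the second difference into a bounded $K$ part, handled by continuity of $U$, and a nonnegative $L$ part, handled by Fatou. So no semiconcavity bound is needed, and the formal argument ``$\Delta U(x)\ge0$, hence $J\le U$'' is made rigorous almost verbatim.

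The price is the Green representation for the non-$C^2$ kernel. You need it also when $B_\rho(x-y)$ contains the origin, that is, for $y\in B_\rho(x)$, a set that may carry mass. It holds because $\Delta K\in L^1_{\rm loc}$ and the flux of the bounded gradient of $K$ through small spheres about the cusp tends to zero, as remarked after \eqref{eq:kernels}. For each fixed $y$ both pieces are then finite, and the $L$ part is $\mu$-integrable as a difference of bounded functions. The paper's route avoids this and uses only elementary pointwise second differences.

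The step you single out as the main obstacle is in fact immediate. Since $\tilde G_\rho(x,\cdot)$ is a probability density supported in $B_\rho(x)$, you have $\int\tilde G_\rho(x,w)L(w-y)\,dw\ge\inf_{B_\rho(x)}L(\cdot-y)$. This tends to $L(x-y)$ by lower semicontinuity of $L$, and to $+\infty$ when $y=x$. Your superharmonicity heuristic points the wrong way: superharmonicity bounds such averages from above, not below, so drop it.

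Your treatment of atoms, and of radial atoms in the plane, coincides with the paper's.
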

\begin{proof}
For $z\ne0$ the eigenvalues of $D^2K(z)$ are $e^{-|z|}$ in the radial
direction and $-e^{-|z|}/|z|$ in the tangential directions, hence
\begin{equation}\label{eq:semiconcavity}
\delta^2_{h,e}K(z):=K(z+he)+K(z-he)-2K(z)\le h^2
\end{equation}
for every unit vector $e$; on a line through zero the same bound
holds, the first derivative having a downward jump there. With the
coordinate vectors $e_1,\ldots,e_N$ put
\[
A_h(z)=N-h^{-2}\sum_{i=1}^N\delta^2_{h,e_i}K(z)\ge0.
\]
At an interior contact point $\delta^2_{h,e_i}U(x)\ge0$ for all
sufficiently small $h$, so $\int A_h(x-y)\,d\mu(y)\le N$.
For $z\ne0$ the limit is
\[
A_h(z)\longrightarrow N-K(z)+(N-1)L(z),
\]
and at $z=0$ it is $+\infty$. By Fatou's lemma
$N-U(x)+J(x)\le N$, and the bound follows.

An atom at zero would put zero in the contact set and make $J(0)$
infinite, and an atom at a nonzero point would, by rotation invariance,
come with arbitrarily many distinct atoms of the same positive mass.
Thus $\mu$ is atomless. In dimension two an interior radial atom would
make $J$ infinite on its supporting circle, the angular
integrand being comparable to $1/(t|\theta|)$ near coincidence.
\end{proof}

\subsection{Boundary mass and mass-free regions}

\begin{lemma}\label{lem:boundary}
For every $N\ge2$, one has $U\in C^1(\R^N)$, $R\in S$ and $U'(R)<0$.
There is $\varepsilon>0$ such that $U(r)>\gamma$ for
$R-\varepsilon<r<R$ and $S\cap(R-\varepsilon,R)=\varnothing$.
In particular $q:=\mu_{\rm rad}(\{R\})>0$.
\end{lemma}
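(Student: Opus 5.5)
The plan is to get everything from one sign computation for the radial derivative of $U$. The only delicate point is regularity: we need the gradient of $U$ to exist and be continuous even though $\nabla K$ jumps at the origin. That is where the atomlessness in Lemma~\ref{lem:contact} enters.

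\emph{Step 1: $U\in C^1(\R^N)$.} Away from zero, $\nabla K(z)=-e^{-|z|}z/|z|$, which is bounded by $1$; $K$ is Lipschitz with only a jump of $\nabla K$ at $z=0$. Differentiating under the integral (dominated convergence, with difference quotients bounded by $1$) gives
\[
\nabla U(x)=-\int e^{-|x-y|}\frac{x-y}{|x-y|}\,d\mu(y)
\]
wherever $\mu(\{x\})=0$. By Lemma~\ref{lem:contact}, $\mu$ is atomless, so this holds at every $x$. Continuity of $\nabla U$ follows from dominated convergence once more: the integrand is bounded and converges $\mu$-a.e., since the only bad point $y=x$ is $\mu$-null.

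\emph{Step 2: the sign of the radial derivative.} By radiality, $U'(r)=e_1\cdot\nabla U(re_1)$, so
\[
U'(r)=-\int e^{-|re_1-y|}\,\frac{r-y_1}{|re_1-y|}\,d\mu(y).
\]
For $y\in\overline B_R$ and $r\ge\max S$ (in particular $r=R$), we have $r-y_1\ge r-|y|\ge0$. If $r>\max S$ the integrand is strictly positive on the support. At $r=R$ it vanishes only at $y=Re_1$, which is $\mu$-null. Since $\mu\neq0$, in both cases $U'(r)<0$.

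\emph{Step 3: conclusions.} First, $R\in S$. If instead $s:=\max S<R$, then $U(s)=\gamma$ by \eqref{eq:radial}, and Step~2 gives $U'<0$ on $(s,R]$. Hence $U(R)<\gamma$, contradicting the equilibrium inequality \eqref{eq:contact}.

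So $R\in S$, $U(R)=\gamma$, and $U'(R)<0$ by Step~2. Continuity of $U'$ from Step~1 gives $\varepsilon>0$ with $U'<0$ on $(R-\varepsilon,R]$, hence $U(r)>U(R)=\gamma$ there. By \eqref{eq:radial}, $U=\gamma$ on $S$, so $S\cap(R-\varepsilon,R)=\varnothing$.

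Thus $R$ is an isolated point of the closed set $S=\supp\mu_{\rm rad}$. An isolated point of a support carries positive mass, so $q=\mu_{\rm rad}(\{R\})>0$.

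The main obstacle is Step~1, specifically continuity of $\nabla U$ at points of $\partial B_R$, where $\mu$ may concentrate. It is handled entirely by the atomlessness in Lemma~\ref{lem:contact}: only a point mass could produce a jump in the gradient of a convolution with a Lipschitz kernel whose gradient is discontinuous at a single point.
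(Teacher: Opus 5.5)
Your proposal is correct and follows the paper's proof essentially step for step. Atomlessness from Lemma~\ref{lem:contact} plus dominated convergence gives $U\in C^1$; the sign of the radial derivative beyond $\max S$ forces $R\in S$; at $r=R$ the integrand vanishes only at the null point $Re_1$; and continuity of $U'$ then isolates $R$ in $S$, so $q>0$. One small wording fix: for $\max S\le r<R$, the inequality $r-y_1\ge r-|y|\ge0$ should be asserted only for $y\in\supp\mu$, not for all $y\in\overline B_R$.
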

\begin{proof}
Since $K$ is Lipschitz and $\mu$ is atomless, dominated convergence
applies to the difference quotients, so
\begin{equation}\label{eq:gradient}
\nabla U(x)=-\int e^{-|x-y|}\frac{x-y}{|x-y|}\,d\mu(y),
\end{equation}
and dominated convergence once more makes it continuous.
If $t_0=\max S<R$, every source point has $|y|\le t_0$, so
$U'(r)<0$ for $r>t_0$ and $U(R)<U(t_0)=\gamma$, contradicting
\eqref{eq:contact}. So $R\in S$ and $U(R)=\gamma$.
At $r=R$ the radial derivative is
\[
U'(R)=-\int e^{-|Re_1-y|}\frac{R-y_1}{|Re_1-y|}\,d\mu(y)<0.
\]
The numerator is positive except at the single point $Re_1$, which
has zero mass. By continuity of $U'$, $U(r)>\gamma$ just inside $R$,
and \eqref{eq:contact} excludes those radii from $S$.
The radius $R$ is isolated in the support of $\mu_{\rm rad}$, hence has positive mass.
\end{proof}

For $N=2,3$ and $t>0$ put
\[
j(r,t)=\int L(re_1-y)\,d\sigma_t(y),
\qquad j(r,0)=L(re_1).
\]
For $0<r<t$ strict subharmonicity and radiality give
$(r^{N-1}\partial_rj(r,t))'=r^{N-1}\Delta j(r,t)>0$, with
$\partial_rj(0,t)=0$, so $j(r,t)$ increases strictly for $r<t$;
for $r>t$ every source distance increases with $r$ and $j(r,t)$
decreases strictly. In dimension two $j(t,t)=+\infty$.
In dimension three the distance substitution gives
\begin{equation}\label{eq:yukawa}
j(r,t)=\frac1{2rt}\int_{|r-t|}^{r+t}e^{-u}\,du
=\frac{e^{-|r-t|}-e^{-(r+t)}}{2rt}\quad(r,t>0).
\end{equation}
In particular it is continuous across $r=t$, with
$j(t,t)=(1-e^{-2t})/(2t^2)$, and the integral also shows
$j(r,t)\le1/\max\{r,t\}$, so by bounded convergence
$J(r)=2\int j(r,t)\,d\mu_{\rm rad}(t)$ is continuous at every positive
radius, including those with a sphere layer. For $N=2$ the infinite
one-sided limits come from monotone convergence outside the source
circle, or Fatou's lemma inside it. Hence the monotonicity bounds above
hold up to $r=t$ in both dimensions.

\begin{lemma}\label{lem:traces}
Let $N=2$ or $N=3$. Suppose $0<r_1<r_2<R$ are contact radii and
$\mu(\{r_1<|x|<r_2\})=0$. Then $J$ is smooth and strictly subharmonic
in this annulus and extends continuously from it to the boundary,
with limiting values $J(r_1),J(r_2)\le\gamma$.
If $0<r_1<R$ is a contact radius and $\mu(B_{r_1})=0$, the same conclusion
holds in $B_{r_1}$, with boundary value $J(r_1)\le\gamma$.
\end{lemma}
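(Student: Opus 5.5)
In the mass-free annulus $A=\{r_1<|x|<r_2\}$ (or the ball $B_{r_1}$) I will write $J$ through the radial decomposition \eqref{eq:radial},
\[
J(x)=(N-1)\int_{[0,R]} j(|x|,t)\,d\mu_{\rm rad}(t),
\]
and note that every source radius $t$ in $S$ lies outside the open interval $(r_1,r_2)$, resp. in $[r_1,R]$. The atom at $0$ is excluded by Lemma~\ref{lem:contact}.

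\emph{Smoothness and strict subharmonicity.} On any compact subset of $A$ the source points stay at positive distance, so $L(x-y)$ and all its $x$-derivatives are bounded uniformly in $y$. Differentiation under the integral then gives $J\in C^\infty(A)$, with
\[
\Delta J=(N-1)\int \Delta L(x-y)\,d\mu(y).
\]
This is strictly positive because $\Delta L>0$ off the origin by \eqref{eq:laplacian} and \eqref{eq:plane}, and $\mu\ne0$. The ball case is identical, since $\mu(B_{r_1})=0$ and the atom at $0$ is excluded.

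\emph{Boundary behaviour.} For $N=3$, \eqref{eq:yukawa} gives $0\le j(r,t)\le 1/\max\{r,t\}\le 1/r_1$ near $r\ge r_1/2$, and $j$ is continuous in $r$ across $r=t$. Bounded convergence therefore shows that $J$ extends continuously to $\overline A$, with value at $|x|=r_i$ equal to $J(r_i)$ as computed from the integral. This continuity of $J$ at positive radii was already recorded after \eqref{eq:yukawa}.

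For $N=2$ the obstacle is sources sitting exactly at $t=r_i$. I will handle this by monotonicity. Split $\mu_{\rm rad}$ into the part with $t\le r_1$ and the part with $t\ge r_2$. As $r\downarrow r_1$ the first part has $r>t$, so $j(r,t)$ increases monotonically to $j(r_1,t)$ (possibly $+\infty$), and monotone convergence applies. For the second part $r<t$ and $j(\cdot,t)$ is continuous at $r_1$ with a uniform bound (distance $\ge r_2-r_1-\delta$), so dominated convergence applies. The limit is therefore the integral value $J(r_1)\in[0,\infty]$. The endpoint $r_2$ is treated symmetrically, using that $j(r,t)$ increases as $r\uparrow t$ for $t\ge r_2$, together with Fatou/monotone convergence. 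Lemma~\ref{lem:contact} says $\mu_{\rm rad}(\{r_i\})=0$ in the plane, so the layer at $r_i$ contributes nothing, and in fact the limit is finite.

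\emph{The bound.} Since $r_i<R$ are contact radii, Lemma~\ref{lem:contact} gives $J(r_i)\le\gamma$, and this is exactly the limiting value found above.

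The main obstacle is the two-dimensional boundary limit, where $j(t,t)=+\infty$. Identifying the one-sided limit with the pointwise integral value requires the monotonicity of $j(\cdot,t)$ on each side of $t$ together with the absence of radial atoms from Lemma~\ref{lem:contact}. Without that absence the extension could be $+\infty$, and continuity into $[0,\infty]$ would still hold but the bound would fail.
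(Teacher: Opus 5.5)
Your proposal is correct and follows essentially the same route as the paper: differentiation under the integral with $\Delta L>0$ off the origin for smoothness and strict subharmonicity, a split of the sources into $t\le r_1$ and $t\ge r_2$ with bounded convergence for the separated part and the one-sided monotonicity of $j(\cdot,t)$ for the adjacent part, and Lemma~\ref{lem:contact} for the bound $J(r_i)\le\gamma$. The only difference is cosmetic: in the plane you apply monotone convergence, while the paper uses the same monotonicity as the dominating bound $j(r,t)\le j(r_i,t)$, integrable because $J(r_i)<\infty$; the planar mass-free-ball case, which you leave implicit, is the same interior-monotonicity argument as your $r_2$ endpoint.
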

\begin{proof}
Either open region is locally separated from the source support, so
differentiation under the integral and \eqref{eq:laplacian}
imply smoothness and strict subharmonicity and Lemma~\ref{lem:contact}
bounds the endpoint values.

In the annulus split the radial integral at the source sets $t\le r_1$
and $t\ge r_2$. As $r\downarrow r_1$, by exterior monotonicity
$j(r,t)\le j(r_1,t)$ for $t\le r_1$, an upper bound integrable
because $J(r_1)<\infty$, and for each such $t$ the integrand converges to
$j(r_1,t)$; at $t=r_1$ that follows from
\eqref{eq:yukawa} when $N=3$, and when $N=2$ the contact
bound has already prevented a radial atom there. The sources $t\ge r_2$
stay uniformly separated from $r$ near $r_1$ and contribute continuously
by bounded convergence. Thus $J(r)\to J(r_1)$.

As $r\uparrow r_2$, by interior monotonicity
$j(r,t)\le j(r_2,t)$ for $t\ge r_2$, again an integrable bound, with
the same endpoint distinction at $t=r_2$ and the remaining sources
separated. Therefore $J(r)\to J(r_2)$.
For a mass-free central ball every source radius satisfies $t\ge r_1$, and
$j(r,t)\le j(r_1,t)$ for $r<r_1$. Dominated convergence gives
$J(r)\to J(r_1)$ as $r\uparrow r_1$. Local separation also makes
$J$ smooth at the center.
\end{proof}

\subsection{Analyticity of densities}

Constancy of $U$ determines $J$ on the planar disk but the density still depends on exterior values of $J$. Recovering it by an integral over the complement puts the diagonal singularity outside the domain of integration.

Take $N=2$ for the following recovery formula, and let
$\mathcal B=(1-\Delta)^{1/2}$ be the Fourier multiplier
$(1+|\xi|^2)^{1/2}$ on tempered distributions. From the transform of $L$,

\begin{equation}\label{eq:recovery}
\mathcal BJ=2\pi\mu.
\end{equation}
For $\tau>0$,
\[
\sqrt\tau=\frac1{2\sqrt\pi}\int_0^\infty(1-e^{-\tau t})t^{-3/2}\,dt.
\]
Integration by parts reduces it to the first formula in
\eqref{eq:gauss} with $\zeta=0$. Applied to the
Fourier multiplier, the same identity becomes
\begin{equation}\label{eq:semigroup}
\mathcal Bh=\frac1{2\sqrt\pi}\int_0^\infty
  (h-e^{-t}e^{t\Delta}h)t^{-3/2}\,dt,
\end{equation}
first for Schwartz functions and then by duality where used below,
and the positive off-diagonal density in the formula is
\begin{equation}\label{eq:levy}
n(z)=\frac1{8\pi\sqrt\pi}\int_0^\infty
  e^{-t-|z|^2/(4t)}t^{-5/2}\,dt
=\frac{(1+|z|)e^{-|z|}}{2\pi|z|^3}\quad(z\ne0).
\end{equation}
For the last equality differentiate the second formula in
\eqref{eq:gauss} in $\zeta$, then set $\alpha=1$ and $\zeta=|z|^2/4$.

\begin{lemma}\label{lem:analyticity}
Suppose $H\in L^1(\R^2)$ and $H=c$ almost everywhere in $B_a$, where $a>0$.
Then the distribution $\mathcal BH$ coincides in $B_a$ with the real analytic
function
\begin{equation}\label{eq:analyticity}
c+\int_{\R^2\setminus B_a}(c-H(y))n(x-y)\,dy.
\end{equation}
\end{lemma}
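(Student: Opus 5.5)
Write $H=c+(H-c)$ with $G:=H-c$, and treat the two pieces separately in the sense of tempered distributions. We read $c$ as the constant function; it is not in $L^1$, but $\mathcal B c=c$ because the multiplier equals $1$ at $\xi=0$. Equivalently, in the semigroup formula \eqref{eq:semigroup} one has $h-e^{-t}e^{t\Delta}h=(1-e^{-t})c$, and $\frac1{2\sqrt\pi}\int_0^\infty(1-e^{-t})t^{-3/2}dt=1$ by the $\sqrt\tau$ identity at $\tau=1$. To avoid non-integrable objects we work with $H$ itself and pair against a test function $\varphi\in C_c^\infty(B_a)$. Since $\mathcal B$ is symmetric,
\[
\langle\mathcal BH,\varphi\rangle=\int H\,\mathcal B\varphi,
\]
and $\mathcal B\varphi$ is a Schwartz function, so the pairing with $H\in L^1$ makes sense.

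Next we evaluate $\mathcal B\varphi$ off $\supp\varphi$ via \eqref{eq:semigroup}. For $y\notin\supp\varphi$ we have $\varphi(y)=0$, so
\[
\mathcal B\varphi(y)=-\frac1{2\sqrt\pi}\int_0^\infty e^{-t}(e^{t\Delta}\varphi)(y)t^{-3/2}dt.
\]
Inserting the heat kernel $(4\pi t)^{-1}e^{-|x-y|^2/4t}$ and applying Fubini gives $\mathcal B\varphi(y)=-\int n(x-y)\varphi(x)\,dx$. Fubini is justified because $|x-y|$ stays bounded below for $x\in\supp\varphi$, $y$ near $y_0$, so the integrand is absolutely integrable in $t$. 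Inside $B_a$ we keep the full formula. We then split
\[
\int H\,\mathcal B\varphi=c\int_{B_a}\mathcal B\varphi+\int_{\R^2\setminus B_a}H\,\mathcal B\varphi .
\]
For the first term, $\int_{\R^2}\mathcal B\varphi=\widehat{\mathcal B\varphi}(0)=\widehat\varphi(0)=\int\varphi$. Hence $c\int_{B_a}\mathcal B\varphi=c\int\varphi-c\int_{\R^2\setminus B_a}\mathcal B\varphi$. Combining this with the exterior formula gives
\[
\langle\mathcal BH,\varphi\rangle=\int\varphi(x)\Bigl(c+\int_{\R^2\setminus B_a}(c-H(y))n(x-y)\,dy\Bigr)dx,
\]
after one more Fubini exchange. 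That exchange needs $\int_{\R^2\setminus B_a}|c-H(y)|\,n(x-y)\,dy<\infty$ uniformly for $x\in\supp\varphi$, which holds because $n(x-y)$ is bounded for $|x-y|\ge\dist(\supp\varphi,\partial B_a)$ and decays like $e^{-|x-y|}$. The $H$ part is then integrable since $H\in L^1$, and the $c$ part since $n$ is integrable away from zero. This identifies $\mathcal BH$ in $B_a$ with \eqref{eq:analyticity}.

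It remains to show that \eqref{eq:analyticity} is real analytic in $B_a$. Fix $x_0\in B_a$ and $\rho>0$ with $\overline B_{2\rho}(x_0)\subset B_a$, so $|x-y|\ge\rho$ for $x\in B_\rho(x_0)$ and $y\notin B_a$. From \eqref{eq:levy}, $n(z)=(1+|z|)e^{-|z|}/(2\pi|z|^3)$ with $|z|=\sqrt{z_1^2+z_2^2}$. This function extends holomorphically in $z\in\C^2$ to a neighbourhood where $\operatorname{Re}(z_1^2+z_2^2)>\rho^2/2$, using the principal square root. On a fixed complex polydisc neighbourhood of $B_{\rho}(x_0)$ we then have the uniform bound $|n(x-y)|\le Ce^{-|x_{\rm re}-y|/2}$ for all $y\notin B_a$. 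Integrating this holomorphic extension against the $L^1_{\rm loc}$ weight $c-H$ gives a function holomorphic in $x$, by Morera's theorem or by differentiating under the integral. The weight is not globally $L^1$ for the $c$ part, but the exponential decay handles it. Restricted to real $x$, this is the required real analytic function.

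The main obstacle is the exponential bound uniform in the complex neighbourhood, with a decay rate that beats the constant $c$ over the unbounded exterior. Writing $z=z_{\rm re}+iw$ with $w$ small, $\operatorname{Re}\sqrt{z\cdot z}\ge|z_{\rm re}|-O(|w|)$ for $|z_{\rm re}|\ge\rho$, which handles it.
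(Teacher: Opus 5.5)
Your proposal is correct and follows essentially the same route as the paper's proof in Appendix~\ref{app:analyticity}. Both arguments pair against a test function supported in $B_a$ and evaluate the semigroup formula \eqref{eq:semigroup} off the test support to produce the kernel $n$, with Fubini justified by the positive distance from that support to $\R^2\setminus B_a$. Both then obtain analyticity from the principal-root holomorphic extension of $n$ with a uniform exponentially decaying majorant. The only difference is bookkeeping: you pair the integrable $H$ with $\mathcal B\varphi$ and use $\int_{\R^2}\mathcal B\varphi=\int\varphi$, which is the dual form of the paper's step $\mathcal B(c)=c$ applied to $g=H-c$.
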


The proof is in Appendix~\ref{app:analyticity}.

For $N=2$, if $U=\gamma$ on a disk $B_a$ then
the identity $(1-\Delta)U=J$ implies $J=\gamma$ almost everywhere there, and by
Lemma~\ref{lem:analyticity} and \eqref{eq:recovery}
the restriction of $\mu$ to $B_a$ has the real analytic density
\begin{equation}\label{eq:density}
f(x)=\frac1{2\pi}\left[
\gamma+\frac1{2\pi}\int_{\R^2\setminus B_a}
(\gamma-J(y))\frac{(1+|x-y|)e^{-|x-y|}}{|x-y|^3}\,dy\right].
\end{equation}
It is nonnegative because $\mu$ is positive. Rotation invariance makes
it radial.

\subsection{Support classification}

\begin{lemma}\label{lem:regularity}
For $t>0$ put $p(r,t)=(K*\sigma_t)(re_1)$ in dimension three.
The function $p(\cdot,t)$ is $C^2$ across $r=t$, and
\[
\partial_r^3p(t+,t)-\partial_r^3p(t-,t)=\frac2{t^2}.
\]
For $a>0$, the radial potential of $\indicator{B_a}\,dx$
is $C^3$ near $r=a$.
\end{lemma}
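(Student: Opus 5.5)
The plan is to compute everything explicitly from the distance substitution used in Lemma~\ref{lem:sphere}. For $N=3$ and $r,t>0$,
\[
p(r,t)=\frac1{2rt}\int_{|r-t|}^{r+t}ue^{-u}\,du
=\frac{F(|r-t|)-F(r+t)}{2rt},\qquad F(u)=(1+u)e^{-u},
\]
since $\int ue^{-u}\,du=-(1+u)e^{-u}$. The term $F(r+t)/(2rt)$ is smooth near $r=t$. So all the non-smoothness sits in $g(r)=F(|r-t|)/(2rt)$, and in fact in $F(|s|)$ with $s=r-t$.

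First I would expand $F$ at zero: $F(u)=1-u^2/2+u^3/3-\cdots$. This follows from $F'(u)=-ue^{-u}$, $F''(u)=(u-1)e^{-u}$ and $F'''(u)=(2-u)e^{-u}$, giving $F(0)=1$, $F'(0)=0$, $F''(0)=-1$, $F'''(0)=2$. Hence $F(|s|)=1-s^2/2+|s|^3/3+O(s^4)$, where the $O(s^4)$ remainder is of the form $\phi(|s|)$ with $\phi$ smooth and $\phi=O(u^4)$. Such a remainder is $C^3$ with vanishing third-order jump. So $F(|s|)$ is $C^2$ across $s=0$, and its third derivative jumps by $2\cdot(1/3)\cdot 3!/ 1\cdot\tfrac12$. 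To be careful: $\frac{d^3}{ds^3}|s|^3/3$ equals $\pm2$ on the two sides, so the jump is $4$.

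Multiplying by the smooth factor $1/(2rt)$ preserves $C^2$. By Leibniz, the jump in the third derivative is $4/(2t\cdot t)=2/t^2$, because lower derivatives of $F(|s|)$ have no jumps. This gives both claims for $p(\cdot,t)$.

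For the ball potential, I would write the radial potential of $\indicator{B_a}\,dx$ as $P(r)=\int_0^a 4\pi t^2\,p(r,t)\,dt$. Integrating a kernel that is $C^2$ with a bounded third-derivative jump along the diagonal gains one derivative. Concretely, I would split the $t$-integral at $t=r$. For the smooth part $F(r+t)$ there is no issue. For the part $F(|r-t|)$, I would write $P$ near $r=a$ as $\int_0^{\min(r,a)}(\cdots)F(r-t)\,dt+\int_{\min(r,a)}^a(\cdots)F(t-r)\,dt$, with smooth weights $2\pi t/r$.

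Differentiating under the integral, the boundary terms cancel at each order because $F$ and its derivatives agree on both sides up to order $2$. For $r<a$, the third derivative picks up the jump of $F''$ (which is none) and integrals of $F'''(|r-t|)$-type kernels. Those are bounded and integrable, so they depend continuously on $r$, including across $r=a$, where the integration range merely varies continuously.

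Alternatively, and more cleanly, I would compute $P$ in closed form. Using $(1-\Delta)^2K=8\pi\delta_0$ from \eqref{eq:green}, $P$ satisfies $(1-\Delta)^2P=8\pi\indicator{B_a}$. Inside and outside $B_a$ it is an explicit combination of $\sinh r/r,\cosh r,\ e^{-r}/r$, etc. Since $(1-\Delta)^2$ is fourth order with a merely bounded jump on the right, $P\in C^3$ by the radial ODE: the jump in the fourth derivative equals $8\pi$, and the lower derivatives match.

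The main obstacle is justifying the differentiation under the integral sign through the diagonal in the $C^3$ claim. I expect the ODE route to be easiest. $P$ is $C^1$ as a convolution of a Lipschitz kernel, and the distributional identity forces $P''''$ to be bounded near $r=a$ in the radial variable. That gives $P\in C^{3}$ there.
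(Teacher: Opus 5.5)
Your proposal is correct. The sphere part is the paper's argument. It uses the same distance substitution $rp(r,t)=\bigl(F(|r-t|)-F(r+t)\bigr)/(2t)$. It uses the same observation that $F(|s|)$ is $C^2$ because $F'(0)=0$, with third-derivative jump $2F'''(0)=4$. It finishes by dividing by the smooth factor. You should discard the garbled product that precedes your corrected count of $4$.

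For the ball, your first route splits the $t$-integral at $t=r$ and checks that the boundary terms cancel through order two. This is an explicit version of the paper's proof. The paper notes that for $t\ge a/4$ the first three $r$-derivatives of the integrand are locally uniformly bounded and the third is continuous off $t=r$, and then concludes by dominated convergence.

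Your preferred ODE route is genuinely different. For $Y=rP$, testing $(1-\Delta)^2P=8\pi\indicator{B_a}$ against radial functions gives
$Y''''-2Y''+Y=8\pi r\indicator{\{r<a\}}$ in $\mathcal D'(0,\infty)$. You only know $P\in C^1$ at the start, so to make the claim that $P''''$ is bounded rigorous you should write out the short bootstrap:
\begin{itemize}
\item $(Y''-2Y)''=8\pi r\indicator{\{r<a\}}-Y$ is bounded, so $Y''-2Y\in C^{1,1}$;
\item since $Y\in C^1$, this gives $Y''\in C^1$ and hence $Y\in C^3$;
\item the equation then gives $Y''''\in L^\infty$.
\end{itemize}

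This route avoids differentiating through the diagonal and yields slightly more: $C^{3,1}$ regularity in $r$, and the jump $P''''(a+)-P''''(a-)=-8\pi$. With the paper's outside-minus-inside convention, the sign of the $8\pi$ in your last paragraph should be negative. Applied to $8\pi\sigma_t$, whose radial form is $(2/t)\delta(r-t)$, the same argument would also recover the jump $2/t$ of $(rp)'''$ without a Taylor expansion.

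The paper's route is more elementary. It needs only the explicit shell formula, which it uses elsewhere anyway, and it proves only the local $C^3$ statement that the later proofs require.
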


\begin{proof}
The distance substitution gives
\[
rp(r,t)=\frac{(1+|r-t|)e^{-|r-t|}-(1+r+t)e^{-(r+t)}}{2t}.
\]
The function $v\mapsto(1+|v|)e^{-|v|}$ is $C^2$, and its third
derivative has jump $4$ at zero. Thus the third derivative of $rp(r,t)$
has jump $2/t$ at $r=t$. Since $p$ and its first two derivatives are
continuous there, the asserted jump is $2/t^2$.

For the volume potential, integrate $p(r,t)$ against $4\pi t^2\,dt$
over $0<t<a$. When $r$ is near $a$, the sources $t<a/4$ are separated
from $r$ and contribute smoothly. For $t\ge a/4$, the first three
radial derivatives are locally uniformly bounded, and the third
derivative is continuous in $r$ except at $t=r$. Dominated convergence
therefore makes the integrated third derivative continuous.
\end{proof}

\begin{proposition}\label{prop:support}
Let $N=2$ or $N=3$. Either $\mu=\sigma_R$, or there is $a\in(0,R)$ such
that
\[
S=[0,a]\cup\{R\},\qquad
\supp\mu=\overline B_a\cup\partial B_R,
\qquad U\equiv\gamma\quad\hbox{on }\overline B_a.
\]
In the latter case $\mu(\partial B_R)>0$ and $U(r)>\gamma$ for
$a<r<R$, so the contact set equals the support.
For $N=2$, the restriction to $B_a$ has a nonnegative real analytic
radial density, positive almost everywhere, and
$\mu(\partial B_a)=0$.
For $N=3$ the measure has the form
\begin{equation}\label{eq:ballform}
\mu=\frac{\gamma}{8\pi}\indicator{B_a}\,dx+m\sigma_a+q\sigma_R,
\quad m,q>0.
\end{equation}
\end{proposition}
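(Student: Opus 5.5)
We argue on the closed radial support $S\subset[0,R]$. By Lemma~\ref{lem:boundary}, $R\in S$, $q>0$, and $S\cap(R-\varepsilon,R)=\varnothing$. Write $S'=S\setminus\{R\}$, a compact subset of $[0,R-\varepsilon]$. If $S'=\varnothing$ then $\mu=\sigma_R$ and we are done.

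Otherwise the first task is to show that $S'$ is an interval $[0,a]$. There are two ways this could fail.

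\emph{A gap.} Suppose $r_1<r_2$ in $S'$ bound a complementary gap $(r_1,r_2)$ with $r_1>0$. Both radii are contact radii by \eqref{eq:contact}. Lemma~\ref{lem:traces} makes $J$ strictly subharmonic in the annulus $\{r_1<|x|<r_2\}$ with boundary values at most $\gamma$. The maximum principle then gives $J<\gamma$ inside. Consequently $(1-\Delta)(U-\gamma)=J-\gamma<0$ there, so $\Delta(U-\gamma)>U-\gamma$. Since $U-\gamma\ge0$ by \eqref{eq:contact}, $U-\gamma$ is subharmonic in the annulus and vanishes on its boundary. Hence $U\le\gamma$, so $U=\gamma$ throughout the annulus. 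But then $\Delta U=0$ and $J=U=\gamma$ there, contradicting $J<\gamma$.

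\emph{A hole at the center.} Suppose $0\notin S'$, and let $r_1=\min S'>0$. Then $\mu(B_{r_1})=0$ (atomlessness at the center comes from Lemma~\ref{lem:contact}). The second part of Lemma~\ref{lem:traces} and the same two-step maximum principle on $B_{r_1}$ give a contradiction.

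Hence $S'$ has no interior gaps and contains $0$. Its smallest point is $0$, it has no gaps, and it is closed, so $S'=[0,a]$; the case $a=0$ is an atom at zero, excluded by Lemma~\ref{lem:contact}. On the annulus $a<r<R$ there is no mass. I expect $U>\gamma$ there, since otherwise a contact radius $r_2\in(a,R)$ would produce, together with $a$, an annulus of the excluded type. The argument above actually needs contact radii rather than support radii, so I will phrase the gap argument for contact radii bounding a mass-free annulus. Meanwhile $U=\gamma$ on $[0,a]\subset S$, and therefore the contact set equals the support.

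For the densities, in dimension three, $U\equiv\gamma$ on $B_a$. Applying \eqref{eq:green} to $U=K*\mu$ gives $8\pi\mu=(1-\Delta)^2U=\gamma$ on $B_a$, so $\mu\lfloor B_a=\frac{\gamma}{8\pi}dx$. The remaining mass lies on $\partial B_a\cup\partial B_R$, giving the form \eqref{eq:ballform} with $m\ge0$ and $q>0$.

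The main obstacle is $m>0$. I would argue by contradiction using Lemma~\ref{lem:regularity}. If $m=0$, the potential of the volume part is $C^3$ near $a$, and the potential of $q\sigma_R$ is smooth there, so $U$ is $C^3$ near $r=a$. Because $U\equiv\gamma$ on $[0,a]$, all left derivatives vanish, and $U-\gamma\ge0$ to the right. Then $U-\gamma=o((r-a)^3)$ to the right, forcing $U'''(a+)=0$. Applying $(1-\Delta)^2$ radially on the right side, where $(1-\Delta)^2U=0$, and comparing with the jump of $(1-\Delta)^2U$ across $r=a$ from the density $\gamma/(8\pi)$ to $0$, yields a fourth-derivative sign contradiction. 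Concretely, just outside $a$ we have $(1-\Delta)^2(U-\gamma)=-\gamma<0$, while $U-\gamma$ vanishes to third order at $a$. That gives $U''''(a+)\approx-\gamma<0$, hence $U<\gamma$ just outside $a$, contradicting the equilibrium inequality. Conversely, a positive $m$ supplies the jump $2m/a^2>0$ in $U'''$ from Lemma~\ref{lem:regularity}, consistent with $U\ge\gamma$.

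In dimension two, $J=\gamma$ a.e.\ on $B_a$. Formula \eqref{eq:density} gives a real analytic, nonnegative, radial density there. Since it is analytic and not identically zero (otherwise $B_a$ would carry no mass, contradicting $S\supset[0,a]$), it is positive almost everywhere. Finally, $\mu(\partial B_a)=0$ by the last assertion of Lemma~\ref{lem:contact}.
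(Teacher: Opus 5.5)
Your proposal is correct and follows the paper's proof essentially step by step: the maximum principle for $J$ and then for $U-\gamma$ on mass-free annuli and central balls, and, for $m>0$, third-order vanishing of $U-\gamma$ at $a$ combined with $(1-\Delta)^2(U-\gamma)=-\gamma$ outside $a$ (the paper runs the same computation with $w=r(U/\gamma-1)$). The one omission is a complementary interval of $S'$ with left endpoint $0$, which neither of your two cases covers; it is excluded because $0$ would then be an isolated point of $S$, hence an atom forbidden by Lemma~\ref{lem:contact}, and this is where atomlessness is actually needed (in your central-hole case $0\notin S$ already gives $\mu(B_{r_1})=0$).
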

\begin{proof}
Suppose that an open region is either a mass-free annulus between
two interior contact radii or a mass-free central ball bounded by
an interior contact sphere. By Lemma~\ref{lem:traces}, $J$
is smooth there, continuous up to the boundary and at most $\gamma$
on the boundary. The maximum principle gives $J<\gamma$ inside.
The function $u=U-\gamma$ is smooth and nonnegative in the region,
vanishes on its boundary and satisfies
\begin{equation}\label{eq:annulus}
(1-\Delta)u=J-\gamma<0.
\end{equation}
If $u$ is positive somewhere its positive interior maximum contradicts
the equation, and $u\equiv0$ contradicts it as well. Both regions
are therefore excluded.

By Lemma~\ref{lem:boundary}, $R$ is isolated in $S$. If
$S\setminus\{R\}$ is empty, $\mu=\sigma_R$ by rotation invariance.
Otherwise $S\setminus\{R\}$ is nonempty and compact. Its least element
is zero (a positive least element would bound a mass-free central ball) and its
greatest element $a$ is positive, as $a=0$ would make zero a point
atom. A point of $(0,a)$ outside $S$ would belong to a complementary
interval with endpoints in $S$, the left one positive, since a left
endpoint at zero would isolate zero in $S$ and make it an atom; both
endpoints would then be interior contact radii bounding a mass-free annulus,
and that case is already ruled out. So $S=[0,a]\cup\{R\}$, and
by \eqref{eq:contact} the potential equals $\gamma$ on
$\overline B_a$.

If an additional contact radius $r_2$ lay in $(a,R)$, then
$(a,r_2)$ would be a mass-free annulus between two interior contact
radii, which the first paragraph makes impossible. So $U>\gamma$ on
$a<r<R$. The contact set equals the support.

For $N=2$ Lemma~\ref{lem:contact} excludes mass on
$\partial B_a$, and formula~\eqref{eq:density} defines a
nonnegative real analytic radial density in $B_a$, with positive
integral over $B_a$ because the interior mass is positive and the
circle $\partial B_a$ carries zero mass. Analyticity then forces
positivity almost everywhere in $B_a$.

For $N=3$, $(1-\Delta)^2U=8\pi\mu$ implies
$\mu|_{B_a}=(\gamma/8\pi)\,dx$; the rest of the mass is carried by
$\partial B_a$ and $\partial B_R$, so by rotation invariance \eqref{eq:ballform} holds
with $m\ge0$ and $q>0$.
Put $s=r-a$ and $w(s)=r(U(r)/\gamma-1)$ in the annulus.
By Lemma~\ref{lem:regularity}, the inner sphere potential is $C^2$
with third-derivative jump $2/a^2$, and the volume potential is $C^3$
near $a$. The outer sphere potential is smooth there.
As $U=\gamma$ inside the central ball,
\[
w(0)=w'(0)=w''(0)=0,\qquad
w'''(0)=\frac{2m}{a\gamma},\qquad
(\partial_s^2-1)^2w=-(a+s).
\]
If $m=0$, evaluating the equation at zero yields
$w^{(4)}(0)=-a<0$, hence
$w(s)=-as^4/24+O(s^5)<0$ for sufficiently small $s>0$, against
$U\ge\gamma$ on the ball. So $m>0$.
\end{proof}

\begin{remark}\label{rem:subsets}
Let $F$ be compact with
$\supp\mu_R\subseteq F\subseteq\overline B_R$, equipped with the
induced Euclidean metric. Then $\mu_R$, regarded as a probability
measure on $F$, is its unique maximizing measure, and
$\Dmx{F}=\Dmx{\overline B_R}$.

The measure $\mu_R$ has mass one on $F$, and every probability on $F$
extends by zero to one on $\overline B_R$ with the same exponential energy.
Thus the two minimum energies coincide, and uniqueness on the ball
implies uniqueness on $F$. The diversity conclusions follow from
\eqref{eq:divintro} and \cite[Corollaries~7.3 and~7.5]{leinsterroff2021}.
\end{remark}

\section{Equilibrium on three-dimensional balls}\label{sec:ball}

Here $N=3$ now. Denote the equilibrium measure by $\mu_R$, with $U=K*\mu_R$ and $\gamma=1/\Dmx{\overline B_R}$ as in
Section~\ref{sec:support}. By \eqref{eq:green},
$(1-\Delta)^2U=8\pi\mu_R$. The function $\Upsilon$ is as in
\eqref{eq:Upsilon}. For $R > R_3$ we first rescale the measure to unit potential on $B_a$, so the volume density is $1/(8\pi)$. The sphere coefficients must cancel the $\cosh r$ and $\sinh r / r$ terms and after this cancellation, contact at the outer sphere leaves a single equation for the annulus width $b=R-a$.

For $b>0$ define
\begin{equation}\label{eq:PQ}
P(b)=b-\frac{b^2+b}{2}e^{-b}-\frac{\sinh b}{2},\qquad
Q(b)=b^2+be^{-b}-\sinh b.
\end{equation}
\begin{lemma}\label{lem:roots}
The function $P$ has a unique positive zero $\beta<R_3$.
It is positive below $\beta$ and negative above it.
Also $Q>0$ on $(0,R_3)$ and $Q(R_3)=0$.
\end{lemma}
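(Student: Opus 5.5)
The statement splits into three independent parts. The claims about $Q$ reduce to Proposition~\ref{prop:sphere}. The sign pattern of $P$ comes from a derivative cascade on a rescaled function. The bound $\beta<R_3$ follows by evaluating $P$ at $R_3$ using $Q(R_3)=0$.

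\emph{The function $Q$.} First check the identity
\[
2e^{-b}Q(b)=2b^2e^{-b}+2be^{-2b}-(1-e^{-2b})=\Upsilon(b).
\]
Thus $Q(b)=\tfrac12e^{b}\Upsilon(b)$. Proposition~\ref{prop:sphere} says $\Upsilon>0$ on $(0,R_3)$ and $\Upsilon(R_3)=0$, so $Q>0$ on $(0,R_3)$ and $Q(R_3)=0$.

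\emph{Sign pattern of $P$.} Work with $g(b)=2e^{b}P(b)$, which has the same sign as $P$:
\[
g(b)=2be^{b}-b^2-b-\tfrac12(e^{2b}-1).
\]
Its derivatives are
\[
g'=2(1+b)e^b-2b-1-e^{2b},\qquad
g''=2(2+b)e^b-2-2e^{2b},\qquad
g'''=2e^b\bigl(3+b-2e^b\bigr).
\]
The factor $3+b-2e^b$ equals $1$ at $0$, is strictly concave, and tends to $-\infty$. So it has exactly one positive zero, and $g'''$ is positive and then negative.

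Note that $g(0)=g'(0)=g''(0)=0$, and each of $g,g',g''$ tends to $-\infty$ because of the $-e^{2b}$ term. Integrate upward, one step at a time. Since $g''(0)=0$ and $g'''$ changes sign once from $+$ to $-$, $g''$ increases from $0$ and then decreases to $-\infty$, so it has exactly one positive zero with sign pattern $+,-$. The same argument applied to $g'$, and then to $g$, gives each of them exactly one positive zero, positive before it and negative after.

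Hence $P$ has a unique positive zero $\beta$, with $P>0$ on $(0,\beta)$ and $P<0$ on $(\beta,\infty)$. As a consistency check, $P(b)=b^3/6+O(b^4)$ near $0$.

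\emph{The bound $\beta<R_3$.} By the sign pattern it suffices to show $P(R_3)<0$. From $Q(R_3)=0$ we get $\sinh R_3=R_3^2+R_3e^{-R_3}$. Substituting this into $P$ gives
\[
P(R_3)=R_3-\tfrac12R_3^2-\tfrac12(R_3^2+2R_3)e^{-R_3}
=\tfrac12R_3(2-R_3)-\tfrac12(R_3^2+2R_3)e^{-R_3}.
\]
The second term is negative. The first is negative as soon as $R_3>2$.

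To verify $R_3>2$, evaluate $\Upsilon(2)=8e^{-2}+5e^{-4}-1$. Using $e^{2}<7.4$ gives $8e^{-2}>1.08$, so $\Upsilon(2)>0$, and the sign description in Proposition~\ref{prop:sphere} yields $2<R_3$.

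The only delicate point is keeping the derivative cascade honest. At each stage one must check both that the function vanishes at $0$ and that it tends to $-\infty$, which here is immediate from the dominant $-e^{2b}$ term. The rest is algebra.
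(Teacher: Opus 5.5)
Your proof is correct. The $Q$ part is the paper's argument, but the other two parts take a different route.

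\emph{Sign pattern of $P$.} The paper expands
\[
4e^bP(b)=\tfrac23b^3-\sum_{n\ge5}\frac{(2^n-4n)b^n}{n!}
\]
and notes that after division by $b^3$ the result decreases strictly from $2/3$ to $-\infty$, since every subtracted coefficient is positive. Your cascade through $g'''=2e^b(3+b-2e^b)$ is an equally elementary alternative. Both versions also yield $P'(\beta)<0$, which the paper uses later in Proposition~\ref{prop:width}. In your version this holds because $g$ is strictly decreasing after the zero of $g'$, and that zero precedes $\beta$.

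\emph{The bound $\beta<R_3$.} The paper avoids numerics with the global identity
\[
Q(b)-2P(b)=b\bigl(b-2+(b+2)e^{-b}\bigr).
\]
The bracket vanishes at $0$ and has derivative $1-(b+1)e^{-b}>0$, so $2P<Q$ on $(0,\infty)$, and $Q(R_3)=0$ gives $P(R_3)<0$. Your expression
\[
P(R_3)=\tfrac12R_3(2-R_3)-\tfrac12(R_3^2+2R_3)e^{-R_3}=-\tfrac12R_3\bigl(R_3-2+(R_3+2)e^{-R_3}\bigr)
\]
is exactly this bracket evaluated at $R_3$. It is therefore negative for every positive value of $R_3$. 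Your verification of $R_3>2$ through $\Upsilon(2)>0$ and $e^2<7.4$ is correct, but you do not need it.
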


\begin{proof}
Expanding $4e^bP(b)$ gives
\[
4e^bP(b)=4be^b-e^{2b}+1-2b-2b^2
=\frac23b^3-\sum_{n\ge5}\frac{(2^n-4n)b^n}{n!}.
\]
After division by $b^3$ what is left decreases strictly from $2/3$ to
$-\infty$, every subtracted coefficient being positive because
$2^n-4n>0$ for $n\ge5$, and the assertions about $P$ follow. Those
about $Q$ follow from the identity $Q(b)=e^b\Upsilon(b)/2$.
Finally,
\[
Q(b)-2P(b)=b\bigl(b-2+(b+2)e^{-b}\bigr).
\]
The bracket vanishes at zero and its derivative is $1-(b+1)e^{-b}>0$, so
$2P<Q$ on $(0,\infty)$, and $Q(R_3)=0$ forces
$P(R_3)<0$. So $\beta<R_3$.
\end{proof}

\begin{theorem}\label{thm:ball}
Let $R_3$ be the zero of $\Upsilon$ from
Proposition~\ref{prop:sphere}.
\begin{enumerate}

\item[(i)] If $R\le R_3$, then $\mu_R=\sigma_R$ and
\begin{equation}\label{eq:ballvalue}
\Dmx{\overline B_R}=\frac{2R^2}{1-(1+2R)e^{-2R}}.
\end{equation}
In particular $\Dmx{\overline B_{R_3}}=e^{R_3}$.
The contact set is $\partial B_R$ for $R<R_3$ and
$\partial B_R\cup\{0\}$ for $R=R_3$, and $U>\gamma$ at every
other point of $\overline B_R$.

\item[(ii)] If $R>R_3$, then $0<\beta<R_3$, and the
scalar equation
\begin{equation}\label{eq:root}
(R-b)P(b)+Q(b)=0
\end{equation}
has a unique solution $b\in(\beta,R_3)$. With
\begin{equation}\label{eq:masses}
a=R-b,\qquad g=a+1-\frac{a+2}{b},\qquad
M=\frac{a^3}{6}+\frac{ag}{2}+\frac{R(a+2)e^b}{2b},
\end{equation}
the maximizing measure and the maximum diversity are
\begin{equation}\label{eq:ballmeasure}
\mu_R=\frac1M\biggl(\frac1{8\pi}\indicator{B_a}\,dx
+\frac{ag}2\,\sigma_a+\frac{R(a+2)e^b}{2b}\,\sigma_R\biggr),
\qquad \Dmx{\overline B_R}=M.
\end{equation}
Both spherical masses are positive.
\end{enumerate}
\end{theorem}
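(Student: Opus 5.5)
Part (i) is a restatement of Proposition~\ref{prop:sphere} together with \eqref{eq:bR}: the equilibrium level is $U_\partial(R)$, so $\Dmx{\overline B_R}=1/U_\partial(R)$ gives \eqref{eq:ballvalue}. At $R=R_3$ the identity \eqref{eq:crossing} gives $U_\partial(R_3)=U_\partial(0)=e^{-R_3}$. The strict inequality off the contact set is the strict form of \eqref{eq:endpoints} already used in that proof.

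For (ii) I will use Proposition~\ref{prop:support}. Since $R>R_3$ forces $\mu_R\ne\sigma_R$ by Proposition~\ref{prop:sphere}, we have $\mu_R$ of the form \eqref{eq:ballform}. I rescale to $\nu=\mu_R/\gamma=\frac1{8\pi}\indicator{B_a}dx+m'\sigma_a+q'\sigma_R$ with unit potential on $\overline B_a$, where $M=\nu(\R^3)=1/\gamma$. The plan is to compute $K*\nu$ explicitly on $0\le r\le R$.
\begin{itemize}
\item The volume part is $\frac1{8\pi}\int_0^a p(r,t)4\pi t^2dt$, elementary by the formula for $rp(r,t)$ in Lemma~\ref{lem:regularity}.
\item The spheres contribute via \eqref{eq:sphere}.
\end{itemize}
For $r<a$, the volume potential equals $1$ minus a combination of $\cosh r$ and $\sinh r/r$ with $a$-dependent coefficients, since $(1-\Delta)^2$ of it is $1$ and the radial homogeneous solutions regular at $0$ are $\sinh r/r$ and $r\cosh r-\sinh r$ over $r$. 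The two sphere terms are linear in $\cosh r$ and $\sinh r/r$ for $r<a$. Imposing $K*\nu\equiv1$ on $B_a$ therefore gives two linear equations for $(m',q')$. Solving them should produce $m'=ag/2$ and $q'=R(a+2)e^b/(2b)$, with $b=R-a$.

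The third condition is contact at the outer sphere, $K*\nu(R)=1$. On $a<r<R$ the potential is an explicit combination of $e^{\pm r}/r$ terms from the volume and the inner sphere, plus the interior formula for $\sigma_R$. Substituting $m',q'$ and clearing the factor $e^{b}$, I expect this condition to reduce to \eqref{eq:root}, $(R-b)P(b)+Q(b)=0$. This is routine but lengthy bookkeeping. Then $M=\nu(\R^3)=a^3/6+m'+q'$, since the volume of $B_a$ divided by $8\pi$ is $a^3/6$.

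Conversely, Proposition~\ref{prop:support} guarantees that the true $\mu_R$ has this form for some $a$, so its $b$ solves \eqref{eq:root} with $m,q>0$. I must show the root is unique in $(\beta,R_3)$, which identifies it. Write $F(b)=(R-b)P(b)+Q(b)$. Positivity of $m'$ requires $g>0$. For $b\le\beta$ we have $P\ge0$, and Lemma~\ref{lem:roots} gives $Q>0$ for $b<R_3$, so $F>0$ there; hence any admissible root has $b>\beta$. For $b\ge R_3$ (with $b<R$) I need $F<0$, using $P<0$ and the sign of $Q$ beyond $R_3$ from Proposition~\ref{prop:sphere}. Thus every root with $0<b<R$ lies in $(\beta,R_3)$, and existence follows from the intermediate value theorem on $[\beta,R_3]$: $F(\beta)=Q(\beta)>0$ and $F(R_3)=(R-R_3)P(R_3)<0$.

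The \textbf{main obstacle} is uniqueness of the root on $(\beta,R_3)$. I would first try to show that $F/P=R-b+Q/P$ is strictly monotone there, i.e.\ that $Q/P$ decreases, or alternatively that $Q'P-QP'$ has a sign. If this fails, the fallback is to use uniqueness of the equilibrium measure. Any root $b$ with $g>0$ gives a measure whose potential is $\gamma$ on the support. If I can also verify the equilibrium inequality on the annulus, which should follow from the maximum-principle argument of Proposition~\ref{prop:support} run in reverse, then each such root yields a minimizer, and uniqueness forces a single $b$. Finally, $g>0$ must be verified on $(\beta,R_3)$, likely via the relation between $P$ and $g$ coming from \eqref{eq:root}. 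Positivity of $q'$ is immediate.
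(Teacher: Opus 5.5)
\textbf{Gap: uniqueness of the root is never established.} Part (i) is argued exactly as in the paper. In (ii), three steps are sound: the necessity computation via Proposition~\ref{prop:support}, the localization of every root with $0<b<R$ in $(\beta,R_3)$, and existence by the intermediate value theorem. But the theorem describes $\mu_R$ through \emph{the} root of \eqref{eq:root}, so your argument identifies $\mu_R$ only once that root is known to be unique, and you leave this step open.

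Route (a) is only announced. You give no argument that $Q/P$ is monotone on $(\beta,R_3)$, and the paper gets monotonicity of $b(R)$ only afterwards, as a consequence of uniqueness.

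Route (b) is the right strategy, but both of its ingredients are missing.
\begin{enumerate}
\item A root with $g\le0$ would give a signed $\nu$, which is not a competitor. So you need $g>0$ at every root, not just at the one coming from $\mu_R$.
\item You need $U>\gamma$ on $a<r<R$ for the constructed potential. The maximum-principle argument of Proposition~\ref{prop:support} cannot supply this ``in reverse''. It turns $u\ge0$, $u=0$ at both ends and $J\le\gamma$ at both ends into a contradiction; it never yields positivity of $u$. Moreover, no bound $J(R)\le\gamma$ is available at the outer sphere, since Lemma~\ref{lem:contact} needs $U\ge\gamma$ on a full neighbourhood. If such a bound did hold, the same argument would give $J<\gamma$ inside and hence $u\le0$ on the annulus, the opposite of what you want.
\end{enumerate}

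\textbf{What the paper uses instead.} It rewrites the root equation and solves the annular problem explicitly. With $k_0,k_1,k_2$ from Lemma~\ref{lem:width}, the identities $bk_1(b)-(b-1)k_0(b)=P(b)$ and $bk_2(b)-(b-2)k_0(b)=Q(b)$ turn \eqref{eq:root} into $g=f_a(b)=(ak_1(b)+k_2(b))/k_0(b)$. Hence $g>0$ at every root.

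Next put $w(s)=r(U(r)-1)$ with $s=r-a$. On the annulus it satisfies $(\partial_s^2-1)^2w=-(a+s)$, with $w(0)=w'(0)=w''(0)=0$ and $w'''(0)=g$; the last value is the jump from Lemma~\ref{lem:regularity}. Solving gives $w(s)=gk_0(s)-ak_1(s)-k_2(s)=k_0(s)\bigl(f_a(b)-f_a(s)\bigr)$. By the strict monotonicity of $f_a$, this is positive on $(0,b)$ and zero at $s=b$.

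So every root yields a minimizer. Distinct roots give distinct inner radii, hence distinct supports, and uniqueness of the minimizer forces a unique root. Since $w(b)=0$ is itself the root equation, this also makes your outer-contact bookkeeping and the necessity direction unnecessary.

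Alternatively, once $g>0$ is known, you could get the annular inequality from a Wronskian argument as in Lemma~\ref{lem:annulus}. Use weight $r^2$, $v=R\sinh(R-r)/r$, and the strict convexity of $J-\gamma$ in $-1/r$ with $J(a)=\gamma$.
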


\begin{proof}[Proof of Theorem~\ref{thm:ball}(i)]
Proposition~\ref{prop:sphere} determines the measure and the contact
set, and its level $U_\partial(R)$ in \eqref{eq:bR} is
$1/\Dmx{\overline B_R}$. At $R=R_3$,
\eqref{eq:crossing} shows
$U_\partial(R_3)=e^{-R_3}$.
\end{proof}

\subsection{Supercritical equilibrium}

Multiplying the potential excess by $r$ strips the annular equation of its variable coefficients and reduces the positivity question to a comparison of a homogeneous solution with particular solutions for constant and linear forcing.

\begin{lemma}\label{lem:width}
For $s>0$ put
\[
\begin{gathered}
k_0(s)=\frac{s\cosh s-\sinh s}{2},\qquad
k_1(s)=\frac{s\sinh s-2\cosh s+2}{2},\\
k_2(s)=\frac{s\cosh s-3\sinh s+2s}{2}.
\end{gathered}
\]
These functions are positive, and, for every $a\ge0$,
\[
f_a(s)=\frac{ak_1(s)+k_2(s)}{k_0(s)}
\]
is strictly increasing.
\end{lemma}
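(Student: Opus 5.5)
The plan is to check positivity coefficient by coefficient in the Taylor series, and to get monotonicity from the monotone form of l'H\^opital's rule. That rule says: if $F(0)=G(0)=0$, $G'>0$ on $(0,\infty)$ and $F'/G'$ is strictly increasing, then $F/G$ is strictly increasing. Since $a\ge0$, it is enough to show that $k_1/k_0$ and $k_2/k_0$ are both strictly increasing.

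For positivity I would expand each function at zero. From $s\cosh s-\sinh s=\sum_{n\ge1}\bigl(\frac1{(2n)!}-\frac1{(2n+1)!}\bigr)s^{2n+1}$, every coefficient of $k_0$ is positive. For $k_1$, the coefficient of $s^{2n}$ in $s\sinh s-2\cosh s+2$ is $\frac{2n-2}{(2n)!}$. This vanishes for $n\le1$ and is positive for $n\ge2$. For $k_2$, the coefficient of $s^{2n+1}$ is $\frac{2n-2}{(2n+1)!}$ once the $2s$ term has cancelled the linear part. So all three functions have nonnegative, not identically zero, series and are positive for $s>0$.

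The key observation for monotonicity is the chain of derivatives. A direct computation gives
\[
k_2'=k_1,\qquad k_1'=k_0,\qquad k_0'=\tfrac12 s\sinh s,\qquad k_0''=\tfrac12(\sinh s+s\cosh s),
\]
and $k_0,k_1,k_2,k_0'$ all vanish at $0$.

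For $k_1/k_0$, one application of the rule reduces the claim to monotonicity of
\[
\frac{k_0}{k_0'}=\coth s-\frac1s .
\]
This is the Langevin function. Its strict increase follows from $\sinh^2 s>s^2$, since the derivative is $\frac1{s^2}-\frac1{\sinh^2 s}$.

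For $k_2/k_0$, I apply the rule twice. First it reduces to $k_1/k_0'$; both numerator and denominator vanish at zero, and $k_0'>0$. Then it reduces to
\[
\frac{k_0}{k_0''}=\frac{s-\tanh s}{s+\tanh s}=\frac{1-\tanh s/s}{1+\tanh s/s}.
\]
This is strictly increasing because $\tanh s/s$ is strictly decreasing. That in turn holds because $(\tanh s/s)'$ has the sign of $s\operatorname{sech}^2s-\tanh s=\frac{2s-\sinh 2s}{2\cosh^2s}<0$. Each denominator in the chain ($k_0$, $k_0'$, $k_0''$) has positive derivative on $(0,\infty)$, so the rule applies at every step.

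I expect the main obstacle to be spotting the derivative chain $k_2'=k_1$, $k_1'=k_0$. A direct Wronskian attack on $f_a'$ leads to products of hyperbolic functions of indefinite appearance. Once the chain is seen, the rest reduces to the two classical monotone functions above.
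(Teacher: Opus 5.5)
Your proof is correct. The Taylor coefficients, the chain $k_2'=k_1$, $k_1'=k_0$, and both applications of the monotone l'H\^opital rule all check out; the strict form of that rule follows from Cauchy's mean value theorem.

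The paper organizes the argument differently. It encodes the same derivative chain as $k_1(s)=\int_0^s k_0(u)\,du$ and $k_2(s)=\int_0^s(s-u)k_0(u)\,du$, which gives positivity at once from $k_0'>0$. For monotonicity it proves only that $k_0$ is strictly log-concave, via $k_0'^2-k_0k_0''=(\sinh^2s-s^2)/4>0$. This makes $k_0(s-t)/k_0(s)$ increasing in $s$. The representations
\[
\frac{k_1}{k_0}=\int_0^s \frac{k_0(s-t)}{k_0(s)}\,dt,\qquad \frac{k_2}{k_0}=\int_0^s t\,\frac{k_0(s-t)}{k_0(s)}\,dt
\]
then finish both quotients at once.

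Your step for $k_1/k_0$ is the same inequality, since $k_0/k_0'=\coth s-1/s$ and $(k_0/k_0')'=(k_0'^2-k_0k_0'')/k_0'^2$. The difference is in $k_2/k_0$. You need a second l'H\^opital step and a second inequality: $k_0/k_0''$ increasing, i.e.\ $\sinh 2s>2s$. The convolution form instead reuses the single log-concavity fact, and would handle any quotient $\int_0^s w(t)k_0(s-t)\,dt/k_0(s)$ with $w>0$. Your route is more mechanical, each step reducing to a classical inequality. The paper's is shorter and extends to other weights without new checks.
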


\begin{proof}
Since $k_0'(s)=s\sinh s/2>0$ and $k_0(0)=0$, positivity follows from
\[
k_1(s)=\int_0^s k_0(u)\,du,\qquad
k_2(s)=\int_0^s(s-u)k_0(u)\,du.
\]
The identity
\[
k_0'(s)^2-k_0(s)k_0''(s)=\frac{\sinh^2s-s^2}{4}>0
\]
makes $k_0'/k_0$ strictly decreasing, and for fixed $t>0$ and $s>t$
\[
\frac{d}{ds}\log\frac{k_0(s-t)}{k_0(s)}
=\frac{k_0'(s-t)}{k_0(s-t)}-\frac{k_0'(s)}{k_0(s)}>0.
\]
In each of $k_1(s)/k_0(s)=\int_0^s k_0(s-t)/k_0(s)\,dt$ and
$k_2(s)/k_0(s)=\int_0^s tk_0(s-t)/k_0(s)\,dt$, increasing the upper limit increases
the integrand on the common interval and adds a positive integral, so
both quotients are strictly increasing, and with them $f_a$ for every
$a\ge0$.
\end{proof}

\begin{proof}[Proof of Theorem~\ref{thm:ball}(ii)]
The left side of \eqref{eq:root} is
positive at $b=\beta$ and negative at $b=R_3$, so a root
lies between them; fix any root in this interval and set
$a=R-b$. The identities
\[
bk_1(b)-(b-1)k_0(b)=P(b),\qquad
bk_2(b)-(b-2)k_0(b)=Q(b)
\]
turn the root equation into
\begin{equation}\label{eq:g}
g=f_a(b)=a+1-\frac{a+2}{b}>0.
\end{equation}
Define $M$ by \eqref{eq:masses} and the positive measure
\[
\nu=\frac1{8\pi}\indicator{B_a}\,dx+\frac{ag}2\,\sigma_a
+\frac{R(a+2)e^b}{2b}\,\sigma_R,
\]
of total mass $M$. In this proof put $U=K*\nu$; after normalization,
the probability measure and its potential are $\nu/M$ and $U/M$.

For $t>0$ and $0\le r\le t$ the potential of normalized surface measure is
\begin{equation}\label{eq:shell}
p(r,t)=\frac{e^{-t}}{t}
\left((t+1)\frac{\sinh r}{r}-\cosh r\right),
\end{equation}
with the continuous value at $r=0$; for $r\ge t$ use $p(r,t)=p(t,r)$.
The formula follows from the distance substitution of
Lemma~\ref{lem:sphere}. Since $\int_{\R^3}e^{-|x|}\,dx=8\pi$ we find the volume
contribution by subtracting the contribution from radii $t>a$, which
for $0\le r\le a$ gives
\[
U_{\rm vol}(r)=1-\frac{e^{-a}}2
\left((a^2+3a+3)\frac{\sinh r}{r}-(a+1)\cosh r\right).
\]
Here the radial density $t^2/2$ produces tail integrals
$e^{-a}(a^2+3a+3)$ and $e^{-a}(a+1)$. Adding the two sphere potentials
cancels the coefficient of $\cosh r$, since
\[
\frac{e^{-a}}2\left(a+1-g-\frac{a+2}{b}\right)=0.
\]
Likewise for the coefficient of $\sinh r/r$,
\[
\frac{e^{-a}}2\left(-a^2-3a-3+g(a+1)
+\frac{(a+2)(R+1)}{b}\right)=0,
\]
by the definition of $g$ and $R=a+b$.. Thus $U=1$ on $\overline B_a$.

By
\eqref{eq:green}, $(1-\Delta)^2U=8\pi\nu$, and since $U$ is
radial, for $r>0$
\[
r(1-\Delta)^2U=\left(\frac{d^2}{dr^2}-1\right)^2(rU).
\]
Lemma~\ref{lem:regularity} gives the sphere and volume regularity
at $a$. With $s=r-a$ and $w(s)=r(U(r)-1)$,
\[
w(0)=w'(0)=w''(0)=0,\qquad w'''(0)=\frac2a\cdot\frac{ag}2=g,
\qquad
\left(\frac{d^2}{ds^2}-1\right)^2w=-(a+s).
\]
The solution is
\begin{equation}\label{eq:excess}
w(s)=gk_0(s)-ak_1(s)-k_2(s)
=k_0(s)\bigl(f_a(b)-f_a(s)\bigr).
\end{equation}
By Lemma~\ref{lem:width}, $w(s)>0$ for $0<s<b$ and $w(b)=0$.
Hence $U>1$ on the open annulus, with $U=1$ on both supporting components.

The probability measure $\nu/M$ has potential at least $1/M$, with
equality on its support, so by strict positive definiteness and
\eqref{eq:variation} it is the unique minimizer, of energy $1/M$.
Two roots of \eqref{eq:root} in $(\beta,R_3)$ would
produce, through the construction, different positive inner radii,
hence different supports, contradicting uniqueness. Every component mass is positive, so the
support is $\overline B_a\cup\partial B_R$, and by
\eqref{eq:excess} it equals the contact set.
\end{proof}

\begin{corollary}\label{cor:capacity}
Let $\lambda>0$. By \eqref{eq:green} and scaling,
$\kappa_\lambda(x)=e^{-\lambda|x|}/(8\pi\lambda)$ is the Green kernel of
$(\lambda^2-\Delta)^2$ on $\R^3$, the Mat\'ern kernel of smoothness
$\tfrac12$. Its equilibrium measure on $\overline B_R$ is the image of
$\mu_{\lambda R}$ under $x\mapsto x/\lambda$, and the capacity of
$\overline B_R$, the reciprocal of the least $\kappa_\lambda$-energy of a
probability measure on it, is $8\pi\lambda\,\Dmx{\overline B_{\lambda R}}$.
\end{corollary}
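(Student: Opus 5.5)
The plan is a direct scaling argument. I will also check that $\kappa_\lambda$ is the Green kernel by Fourier transform.

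\textbf{Green kernel.} Put $K_\lambda(x)=K(\lambda x)=e^{-\lambda|x|}$. By \eqref{eq:fourier} with $N=3$,
\[
\widehat{K_\lambda}(\xi)=\lambda^{-3}\widehat K(\xi/\lambda)=\frac{8\pi\lambda^{-3}}{(1+|\xi|^2/\lambda^2)^2}=\frac{8\pi\lambda}{(\lambda^2+|\xi|^2)^2}.
\]
Hence $\widehat{\kappa_\lambda}=(\lambda^2+|\xi|^2)^{-2}$, and so $(\lambda^2-\Delta)^2\kappa_\lambda=\delta_0$. Equivalently, this is \eqref{eq:green} applied to $K$ and transported by the dilation $x\mapsto\lambda x$, tracking the factor $\lambda^{3}$ from $\delta_0(\lambda x)=\lambda^{-3}\delta_0(x)$. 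The Mat\'ern identification is then just the standard naming of this kernel, $e^{-\lambda r}$ being the smoothness-$\tfrac12$ member of the family.

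\textbf{Transporting the energy problem.} Let $D(x)=x/\lambda$. It is a bijection between probability measures on $\overline B_{\lambda R}$ and those on $\overline B_R$, via $\nu\mapsto D_\#\nu$. For any such $\nu$,
\[
\iint\kappa_\lambda(x-y)\,dD_\#\nu(x)\,dD_\#\nu(y)=\frac1{8\pi\lambda}\iint e^{-|x'-y'|}\,d\nu(x')\,d\nu(y'),
\]
because $\lambda|x/\lambda-y/\lambda|=|x-y|$. So the $\kappa_\lambda$-energy on $\overline B_R$ is a fixed positive multiple of the $K$-energy on $\overline B_{\lambda R}$, and minimizers correspond under $D$. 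Since $\mu_{\lambda R}$ is the unique minimizer for $K$ on $\overline B_{\lambda R}$ (uniqueness from strict positive definiteness, Section~\ref{sec:kernels}), $D_\#\mu_{\lambda R}$ is the unique $\kappa_\lambda$-equilibrium on $\overline B_R$.

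\textbf{Capacity.} The least $\kappa_\lambda$-energy is $(8\pi\lambda)^{-1}I(\mu_{\lambda R})=(8\pi\lambda)^{-1}\Dmx{\overline B_{\lambda R}}^{-1}$ by \eqref{eq:divintro}. Its reciprocal is therefore $8\pi\lambda\,\Dmx{\overline B_{\lambda R}}$.

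No step is a real obstacle. The only care needed is keeping the constants straight in the Fourier scaling, namely the $\lambda^{-3}$ Jacobian and the normalization $1/(8\pi\lambda)$.
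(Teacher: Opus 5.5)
Your proposal is correct and follows the paper's own proof. The dilation sets up a bijection between probability measures on $\overline B_R$ and on $\overline B_{\lambda R}$ that multiplies energy by the fixed factor $1/(8\pi\lambda)$, so minimizers and minimum values correspond; your explicit Fourier and scaling check of $(\lambda^2-\Delta)^2\kappa_\lambda=\delta_0$ just spells out what the paper asserts from \eqref{eq:green}.
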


\begin{proof}
The dilation $x\mapsto\lambda x$ sends $\overline B_R$ onto
$\overline B_{\lambda R}$, and the $\kappa_\lambda$-energy of a
probability measure on $\overline B_R$ is $1/(8\pi\lambda)$ times the
exponential energy of its image. The claim follows on minimizing both sides.
\end{proof}

\subsection{Radius limits}

\begin{proposition}\label{prop:width}
The root $b=b(R)$ is continuous and strictly decreasing, with
\[
\lim_{R\downarrow R_3}b(R)=R_3,
\qquad \lim_{R\to\infty}b(R)=\beta.
\]
The maximum diversity is continuous at $R_3$.
\end{proposition}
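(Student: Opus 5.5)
Rewrite the root equation \eqref{eq:root} as
\[
R=\Phi(b):=b-\frac{Q(b)}{P(b)},
\]
valid on $(\beta,R_3)$, where Lemma~\ref{lem:roots} gives $P<0$ and $Q>0$. Uniqueness of the root for each $R>R_3$ (Theorem~\ref{thm:ball}(ii)) says that $\Phi$ takes every value in $(R_3,\infty)$ exactly once on $(\beta,R_3)$. The function $\Phi$ is continuous there. At the endpoints, $P(R_3)<0$ and $Q(R_3)=0$ give $\Phi(b)\to R_3$ as $b\uparrow R_3$. As $b\downarrow\beta$ we have $P(b)\to0^-$ and $Q(\beta)>0$, so $\Phi(b)\to+\infty$.

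A continuous map of an interval that hits each value of $(R_3,\infty)$ exactly once is injective, hence strictly monotone. The endpoint limits then force $\Phi$ to be strictly decreasing, from $+\infty$ down to $R_3$. One also has to check that $\Phi$ never takes a value $\le R_3$ inside the interval. This follows because such a value would, by the intermediate value theorem and the endpoint limits, give a second preimage of some value slightly above it. A strictly monotone continuous bijection $(\beta,R_3)\to(R_3,\infty)$ has a continuous strictly decreasing inverse. That inverse is $b(R)$, and its limits are $R_3$ as $R\downarrow R_3$ and $\beta$ as $R\to\infty$. All of this is the intermediate value theorem plus the uniqueness already proved, so no derivative of $\Phi$ needs to be computed.

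For continuity of the maximum diversity at $R_3$, the left limit follows from \eqref{eq:ballvalue}, which is continuous and equals $e^{R_3}$ at $R_3$. For the right limit, substitute $b(R)\to R_3$ and $a=R-b\to0$ into \eqref{eq:masses}. Then $g=a+1-(a+2)/b$ stays bounded, so $a^3/6$ and $ag/2$ tend to zero. The remaining term satisfies
\[
\frac{R(a+2)e^b}{2b}\to\frac{R_3\cdot2\,e^{R_3}}{2R_3}=e^{R_3}.
\]
This matches $\Dmx{\overline B_{R_3}}=e^{R_3}$ from Theorem~\ref{thm:ball}(i).

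The step most likely to need care is the endpoint behaviour of $\Phi$ near $\beta$, namely the sign of $P$ just above $\beta$ and $Q(\beta)>0$. Both come directly from Lemma~\ref{lem:roots}, since $\beta<R_3$.
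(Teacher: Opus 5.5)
There is a genuine gap in the step where you rule out $\Phi\le R_3$ inside $(\beta,R_3)$, and it is exactly the step that carries the threshold limit. Your reduction to $R=\Phi(b)$, the two endpoint limits, and the computation $M\to e^{R_3}$ are fine.

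Suppose $\Phi(b_1)\le R_3$ for some $b_1$. On $(\beta,b_1)$ the function covers $(R_3,\infty)$. A second preimage of some $v>R_3$ would require $\Phi>R_3$ somewhere in $(b_1,R_3)$. The limit $\Phi(b)\to R_3$ as $b\uparrow R_3$ does not give this, because $\Phi$ may approach $R_3$ from below. The values that do acquire two preimages lie between $\Phi(b_1)$ and $R_3$, outside the range where Theorem~\ref{thm:ball}(ii) gives uniqueness.

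Concretely, take a continuous $\Phi$ that decreases from $+\infty$, reaches the level $R_3$ at some $b^*<R_3$, dips below it, and climbs back to $R_3$ from below. It has every property you use: continuity, both endpoint limits, and exactly one preimage of each $R>R_3$. Yet for it $b(R)\to b^*\neq R_3$ as $R\downarrow R_3$. Your argument therefore does give continuity, strict decrease and $b(R)\to\beta$, since $\Phi$ restricted to $(\beta,b^*)$ is a continuous bijection onto $(R_3,\infty)$. It does not give $\lim_{R\downarrow R_3}b(R)=R_3$. Hence it also does not give continuity of the maximum diversity at $R_3$, which you derived from that limit.

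The missing input is information at $R=R_3$ itself, which uniqueness for $R>R_3$ cannot supply. By the intermediate value theorem, any value $\Phi\le R_3$ forces $\Phi(b_0)=R_3$ for some $b_0\in(\beta,R_3)$. That is, $a_0P(b_0)+Q(b_0)=0$ with $a_0=R_3-b_0>0$. The construction in the proof of Theorem~\ref{thm:ball}(ii) uses only this equation, $a_0>0$, and Lemma~\ref{lem:width}. So it would produce an equilibrium on $\overline B_{R_3}$ with positive mass in $B_{a_0}$, contradicting $\mu_{R_3}=\sigma_{R_3}$ and uniqueness in Theorem~\ref{thm:ball}(i). This is how the paper obtains $b(R)\to R_3$; there it is phrased as excluding a limit $b_0<R_3$. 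Once $\Phi>R_3$ on all of $(\beta,R_3)$ is established this way, the rest of your inverse-function argument goes through.

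Alternatively, a verified inequality $Q'(R_3)<P(R_3)$ would make $\Phi>R_3$ just below $R_3$. Your second-preimage argument would then work, but that needs quantitative control of $R_3$, which the proposal does not provide.
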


\begin{proof}
Set $G_R(t)=(R-t)P(t)+Q(t)$. Continuity comes from taking
subsequential limits in \eqref{eq:root}; at a fixed
$R>R_3$ the strict endpoint signs rule out $\beta$ and
$R_3$ as a limiting root, and the unique interior root is
all that is left. For strict decrease take
$R'>R>R_3$. Then
\[
G_{R'}(b(R))=(R'-R)P(b(R))<0,
\qquad G_{R'}(\beta)=Q(\beta)>0.
\]
The unique root for $R'$ is therefore smaller than $b(R)$.
As $R\to\infty$, boundedness of $b$ implies $P(b)=-Q(b)/(R-b)=O(1/R)$,
hence $b\to\beta$. The series for $4e^bP(b)/b^3$ in
Lemma~\ref{lem:roots} has a negative termwise derivative, so
$P'(\beta)<0$ and $b-\beta=O(1/R)$.
Were the limit $b_0$ at the threshold smaller than $R_3$, we would
have $a_0=R_3-b_0>0$ and $a_0P(b_0)+Q(b_0)=0$, which are all
the conditions the construction with positive inner radius uses, and it would produce a
second minimizer at $R=R_3$, contrary to
Theorem~\ref{thm:ball}(i). So $b\to R_3$ and $a\to0$ at
the threshold. Substitution in \eqref{eq:masses} shows
$M\to e^{R_3}$, so the maximum diversity is continuous at
$R_3$.
\end{proof}

\begin{remark}\label{rem:limits}
For large $R$ both spherical masses in \eqref{eq:ballmeasure} are
$O(1/R)$, and collecting the terms of order $R^3$ and $R^2$ in $M$
gives
\begin{equation}\label{eq:largeR}
\Dmx{\overline B_R}=\frac{R^3}{6}
+\frac12\Bigl(\frac{e^\beta}{\beta}+1-\beta-\frac1\beta\Bigr)R^2+O(R),
\quad R\to\infty,
\end{equation}
with leading term as in \cite[Proposition~9.7]{leinsterroff2021}.
Meckes's variational formula and
\cite[Theorem~2]{barcelocarbery2018} also show
\[
\Dmx{\overline B_R}\le\Mag(\overline B_R)
=\frac{R^3}{6}+R^2+2R+1.
\]
The coefficient of $R^2$ in \eqref{eq:largeR} is
approximately $0.928$. For $R=3,4,\ldots,10$, Appendix~\ref{app:tables} lists
the annulus width $b$ from \eqref{eq:root} and the maximum
diversity from \eqref{eq:masses}, next to the magnitude.

At small radius, the formula in Theorem~\ref{thm:ball}(i) expands to
$\Dmx{\overline B_R}=1+\frac43R+O(R^2)$, so
$\Mag(\overline B_R)-\Dmx{\overline B_R}=\frac23R+O(R^2)$.
Roff and Willerton's microscopic weighting of the unit three-ball
includes a normal-derivative term on the boundary sphere
\cite[Proposition~6.8]{roffwillerton2026}.
That term is absent from the subthreshold maximizing probability.
\end{remark}

\section{Equilibrium on planar disks}\label{sec:disk}

In the plane $\widehat K=2\pi(1+|\xi|^2)^{-3/2}$, so recovering a
measure from its exponential potential takes
$(1-\Delta)^{3/2}=(1-\Delta)(1-\Delta)^{1/2}$, and the square root is
nonlocal. Convolution with $L/(2\pi)$ inverts the square root, since
$\widehat L=2\pi(1+|\xi|^2)^{-1/2}$, and $L$ is the trace on the plane
$z=0$ of the Yukawa kernel $e^{-|X|}/|X|$, which is $4\pi$ times the
fundamental solution of $1-\Delta_3$ on $\R^3$. For a minimizer $\mu$,
by \eqref{eq:plane}, $J=(1-\Delta)U$ equals $L*\mu$, the trace of the
Yukawa potential of $\mu$ regarded as a charge on that plane, and
$J=\gamma$ almost everywhere on every open set where $U=\gamma$. Above the
threshold the central disk is held at constant potential in the
field of the charged outer circle, with screening length one. Without
the screening and the circle it is the classical electrified disk, whose
density blows up like the inverse square root of the distance to the
edge. As $R$ decreases to the threshold, the normalized interior
charge, rescaled to the unit disk, tends to the equilibrium charge of
that disk (Theorem~\ref{thm:threshold}), and above it the
planar density obeys the same edge law (Theorem~\ref{thm:edge}).

\begin{lemma}\label{lem:trace}
Let $a>0$ and let $f$ be a measurable function on $B_a\subset\R^2$ with
\[
|f(y)|\le C(a^2-|y|^2)^{-1/2}.
\]
Its three-dimensional Yukawa potential
\[
\int_{B_a}
\frac{e^{-|X-(y,0)|}}{|X-(y,0)|}f(y)\,dy
\]
is continuous on $\R^3$ and tends to zero at infinity. Its trace on
$z=0$ is $L*(f\,dx)$, with equality at every point, including
$\partial B_a$.
\end{lemma}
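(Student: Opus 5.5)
The plan is to prove continuity by uniform integrability of the kernel against the weight $|f|$, then obtain the trace identity by evaluating at $z=0$. Write $X=(x,z)$ with $x\in\R^2$ and put $Y(X)=\int_{B_a}k(X,y)f(y)\,dy$, where $k(X,y)=e^{-\rho}/\rho$ and $\rho=\sqrt{|x-y|^2+z^2}$. The trace statement is immediate once $Y$ is shown to be finite everywhere. At $z=0$ we have $\rho=|x-y|$, so the integrand is exactly $L(x-y)f(y)$, and $Y(x,0)=(L*(f\,dx))(x)$ pointwise, provided this integral converges absolutely. The substantive claims are therefore absolute convergence at every point, continuity, and decay.

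For continuity, use the Vitali convergence theorem, or equivalently dominated convergence after splitting near and far parts. Since $k(X,y)\le 1/|x-y|$, it suffices to show that
\[
\sup_{x\in\R^2}\int_{B_a\cap B_\delta(x)}\frac{(a^2-|y|^2)^{-1/2}}{|x-y|}\,dy\longrightarrow0\qquad(\delta\to0).
\]
Off the diagonal ($|x-y|\ge\delta$), $k$ is jointly continuous and bounded by $1/\delta$. The weight $(a^2-|y|^2)^{-1/2}$ is integrable on $B_a$, so that part is continuous in $X$ by dominated convergence. The near part is then uniformly small, and $Y$ is continuous as a uniform limit of continuous functions.

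The main obstacle is the displayed uniform estimate, which is hardest at edge points $|x|=a$, where the two singularities meet. I would use Hölder's inequality with exponents $p\in(1,2)$ on $|x-y|^{-1}$ and $p'>2$ on the weight. The difficulty is that the weight lies in $L^{q}$ only for $q<2$, since it behaves like $d^{-1/2}$ with $d=a-|y|$, a one-dimensional singularity. So Hölder fails directly, and instead I would estimate in local coordinates near the edge. Write $y$ via the normal distance $d=a-|y|$ and an arc-length variable $\theta$, so that $dy\approx d\,d\theta$ up to bounded factors near the circle, and $|x-y|\gtrsim |\theta-\theta_x|+|d-d_x|$. The integral is then bounded by
\[
\int_0^{\delta}\!\!\int_{|\theta|<\delta}\frac{d^{-1/2}}{|\theta|+|d-d_x|}\,d\theta\,dd\lesssim\int_0^\delta d^{-1/2}\log\Bigl(1+\frac{\delta}{|d-d_x|}\Bigr)dd.
\]
This tends to $0$ uniformly in $d_x$ as $\delta\to0$, because both $d^{-1/2}$ and the logarithm are in $L^{2}$ locally, uniformly under translation; by Cauchy--Schwarz the bound is $O(\delta^{1/2}\cdot\delta^{1/2}\log)$-small. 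Interior points, where the weight is bounded, are handled trivially by $\int_{B_\delta}|x-y|^{-1}dy=2\pi\delta$.

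For decay at infinity, use the same bound at the far part: for $|X|\ge 2a$, $k(X,y)\le 2e^{-|X|/2}/|X|$ for all $y\in B_a$, and the weight is integrable. Hence $|Y(X)|\le C'\,e^{-|X|/2}/|X|\to0$.
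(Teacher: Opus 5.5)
Your overall architecture matches the paper's. You dominate the Yukawa kernel by the Coulomb kernel, show that the near-diagonal part of the integral is small uniformly in $X$, get continuity of the truncated potentials by dominated convergence, and pass to the uniform limit. Decay and the pointwise trace identity then follow as you say.

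\emph{Error in the key estimate.} The one substantive estimate is justified incorrectly. Cauchy--Schwarz would need $d^{-1/2}\in L^2(0,\delta)$. That fails, because $d^{-1}$ is not integrable at $0$; you noted yourself a paragraph earlier that this singularity lies only in $L^q$ for $q<2$. The claimed rate $\delta\log(1/\delta)$ is also wrong. For $d_x=0$ the substitution $d=\delta u$ turns your bound into exactly $\delta^{1/2}\int_0^1u^{-1/2}\log(1+1/u)\,du$, so the true order is $\delta^{1/2}$.

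\emph{Repair.} Use H\"older with exponents $3/2$ and $3$ instead. On any interval $I$ of length at most $2C\delta$ one has $\|d^{-1/2}\|_{L^{3/2}(I)}\le c\,\delta^{1/6}$, since $d^{-3/4}$ is decreasing. By scaling, $\|\log(1+\delta/|d-d_x|)\|_{L^3(I)}\le c\,\delta^{1/3}$. Together these give $O(\delta^{1/2})$ uniformly in $d_x$.

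You should also integrate over $I=\{d\in(0,a):|d-d_x|<C\delta\}$ rather than $(0,\delta)$. This covers projections $x$ outside the disk ($d_x<0$). It also covers points whose distance to the circle exceeds $\delta$ but is small compared with $a$. Your ``interior'' remark does not reach those points, because there the weight on $B_\delta(x)$ is not bounded independently of $\delta$.

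\emph{Comparison with the paper.} The paper avoids multiplying two singular factors. It first proves the growth bound $\nu(B_s(X))\le C_a s^{3/2}$ for $\nu=|f(y)|\,dy$ on the plane $z=0$, uniformly in $X\in\R^3$ and $0<s<a/8$. The ingredients are these: a circle of radius $t\in[a/2,a)$ meets the ball in arcs of total length $O(s)$, the relevant radii fill an interval of length at most $2s$, and $\int_I(a-t)^{-1/2}\,dt=O(s^{1/2})$. Layer-cake integration then gives $\int_{|X-Y|<\delta}|X-Y|^{-1}\,d\nu(Y)\le 3C_a\delta^{1/2}$ for every $X$ at once, with no case distinction by the position of $X$. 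The paper reuses that near-field bound for the Coulomb equilibrium density and in the inner edge asymptotics. Once repaired, your coordinate computation proves the same $\delta^{1/2}$ bound and is a legitimate, more hands-on alternative.
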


\begin{proof}
Let $\nu$ be the measure $|f(y)|\,dy$ on the plane $z=0$. We first
prove the uniform estimate
\begin{equation}\label{eq:localmass}
\nu(B_s(X))\le C_a s^{3/2}
\quad(X\in\R^3,\ 0<s<a/8).
\end{equation}
Projecting to the plane reduces it to planar balls. In
$B_{a/2}$ the density is bounded and the contribution is $O(s^2)$.
For $a/2\le t<a$ a circle of radius $t$ meets a ball of radius
$s<a/8$ in arcs of total length at most a constant times $s$, and the
contributing radii stay in an interval $I$ of length at most $2s$, so, since
$a^2-t^2\ge a(a-t)$, polar integration bounds this part by
\[
C_a s\int_I(a-t)^{-1/2}\,dt\le C_a s^{3/2},
\]
uniformly over intervals $I\subset[a/2,a]$ of length at most $2s$.
Enlarging the constant to cover both parts leads to
\eqref{eq:localmass}.

Layer-cake integration then gives, uniformly in $X$ and for
$0<\delta<a/8$,
\begin{align}
\int_{|X-Y|<\delta}\frac{d\nu(Y)}{|X-Y|}
&=\frac{\nu(B_\delta(X))}{\delta}
  +\int_0^\delta\frac{\nu(B_s(X))}{s^2}\,ds
\le3C_a\delta^{1/2}.
\label{eq:nearfield}
\end{align}
Since the Yukawa kernel is bounded by the Coulomb kernel, removing its
singularity inside a ball of radius $\delta$ changes
the potential uniformly by $O(\delta^{1/2})$; with continuous
truncated kernels the potentials are continuous by dominated
convergence, hence so is their uniform limit. Decay at infinity comes
from compact support and integrability of $f$. The trace identity is
immediate from the defining integral, which is now finite at every
planar point.
\end{proof}

\begin{lemma}\label{lem:lift}
Let $0<a<R$ and let $\psi$ be a measurable radial function on $B_a$, with
$|\psi(y)|\le C(a^2-|y|^2)^{-1/2}$. Suppose that
$L*(\psi\,dx)=j_R=L*\sigma_R$ on $B_a$. Regard the disk and circle as subsets of
the plane $z=0$, and put
\[
W(X)=\int_{\partial B_R}
  \frac{e^{-|X-(y,0)|}}{|X-(y,0)|}\,d\sigma_R(y)
  -\int_{B_a}\frac{e^{-|X-(y,0)|}}{|X-(y,0)|}\psi(y)\,dy.
\]
Then $W=0$ continuously on the closed disk, $W>0$ on the complement
of the disk and circle, and $W(X)\to+\infty$ as $X$ approaches the
circle. Its planar trace $j_R-L*(\psi\,dx)$ is zero on
$\overline B_a$ and positive elsewhere, with value $+\infty$ on
$\partial B_R$. It belongs to $L^1(\R^2)$, with a logarithmic
singularity at $r=R$.
\end{lemma}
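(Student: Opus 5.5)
The plan is to treat $W$ as a screened potential in $\R^3$: it solves $(1-\Delta_3)W=0$ off the plane pieces carrying charge, namely off $\overline B_a\cup\partial B_R$ viewed in $\{z=0\}$. Positivity will then come from the maximum principle for $1-\Delta_3$ on the complement of the disk and circle.

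\emph{Regularity and boundary values.} By Lemma~\ref{lem:trace}, applied to $\psi$, the disk part of $W$ is continuous on $\R^3$, tends to zero at infinity, and its planar trace is $L*(\psi\,dx)$ pointwise, including on $\partial B_a$. The circle part is the Yukawa potential of a smooth curve measure. It is smooth off the circle and tends to zero at infinity. Approaching the circle from any direction, it behaves like $\frac{1}{2\pi R}\int \frac{ds}{\sqrt{\rho^2+s^2}}\sim\frac1{\pi R}\log(1/\rho)$, where $\rho$ is the distance to the circle, so it tends to $+\infty$. Its trace is $j_R$, which is continuous on $\overline B_a$ because $a<R$. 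The hypothesis $L*(\psi\,dx)=j_R$ on $B_a$ then extends by continuity to $\overline B_a$, giving $W=0$ on the closed disk, continuously in $\R^3$.

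\emph{Positivity.} Let $\Omega=\R^3\setminus(\overline B_a\cup\partial B_R)$. This set is open and connected, since the disk and circle are planar sets of codimension at least one that do not separate space. In $\Omega$, $W$ is smooth and satisfies $(1-\Delta)W=0$, because both kernels are $4\pi$ times the fundamental solution away from the sources. Suppose $\inf_\Omega W<0$. Then, since $W\to0$ at infinity, $W=0$ on the disk and $W\to+\infty$ at the circle, a negative minimum is attained at an interior point $X_0$. There $\Delta W\ge0$, so $W(X_0)=\Delta W(X_0)\ge0$, a contradiction. Hence $W\ge0$ in $\Omega$. The strong maximum principle for $1-\Delta$ (a nonnegative supersolution vanishing at an interior point vanishes identically on the component) then gives $W>0$ unless $W\equiv0$ on $\Omega$. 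The latter is impossible because $W\to+\infty$ near the circle. Restricting to $z=0$, the trace is zero on $\overline B_a$, positive on $\{a<|x|\}\setminus\partial B_R$, and equal to $+\infty$ on $\partial B_R$, since $j_R(R)=+\infty$ as noted earlier.

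\emph{Integrability.} The trace is $j_R-L*(\psi\,dx)$, and both terms are in $L^1(\R^2)$. Indeed $L\in L^1$, $\sigma_R$ is finite, and $\psi$ is integrable because $(a^2-|y|^2)^{-1/2}$ is integrable on $B_a$. The logarithmic singularity comes from the asymptotics of $j_R$ near $r=R$ computed above, while $L*(\psi\,dx)$ is bounded near $r=R$ since that region is separated from the disk.

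\emph{Main obstacle.} The main delicacy is applying the maximum principle on an unbounded domain whose boundary includes a codimension-two circle where $W$ blows up. The plan is to exhaust $\Omega$ by removing small tubes around the circle and a large ball. On the tube boundaries $W$ is large and positive, and on the sphere at infinity $|W|$ is small. The weak minimum principle on these truncated domains, followed by letting the tubes shrink and the ball grow, gives $W\ge0$ rigorously. One also needs continuity of $W$ up to the disk's edge from off the plane, and this is exactly what Lemma~\ref{lem:trace} supplies.
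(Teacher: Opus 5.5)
Your proposal is correct and follows the paper's proof essentially step for step: vanishing and continuity on the closed disk via Lemma~\ref{lem:trace}, logarithmic blow-up at the circle, a negative interior minimum contradicting $W=\Delta W$, the strong maximum principle on the connected complement, and Tonelli's theorem for integrability of the trace. The exhaustion by tubes and a large ball only spells out the minimum-attainment step the paper states directly. For connectivity, the relevant fact is not codimension but that the half-spaces $z>0$ and $z<0$ are joined through the planar region $|x|>a$ off the circle.
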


\begin{proof}
The circle potential is smooth near the closed disk
because $a<R$, and the assumed identity with Lemma~\ref{lem:trace}
shows that $W=0$ continuously on the whole closed disk. Near the circle the disk
potential stays bounded. After a rotation let $X=(r,0,z)$ and put
$d^2=(r-R)^2+z^2$; the squared distance to a source point at angle
$\theta$ is $d^2+4rR\sin^2(\theta/2)$, so near the circle the
circle potential is comparable to
\[
\int_{-\varepsilon}^{\varepsilon}(d^2+s^2)^{-1/2}\,ds,
\qquad d=\dist(X,\partial B_R),
\]
and tends to $+\infty$ as $d\downarrow0$.
Off the disk and circle $W$ is smooth with
$(1-\Delta_3)W=0$, and it tends to zero at infinity.
A negative value of $W$ would, with these boundary limits, force a
negative minimum at an interior point of that complement, where
$(1-\Delta_3)W<0$, a contradiction. The complement is connected. The strong
maximum principle and the positive circle singularity imply $W>0$
throughout it.

The asserted trace signs follow by restriction to the plane. For
$r$ near $R$ the squared distance from $re_1$ to a point at angle
$\theta$ on the circle is
$(r-R)^2+4rR\sin^2(\theta/2)$, with a local integral of logarithmic
growth, and the disk contribution is smooth near $r=R$.
Finally $L\in L^1(\R^2)$ and both source measures have finite total
variation, and Tonelli's theorem, applied to their absolute values, makes
the trace integrable.
\end{proof}

For the Coulomb kernel $\mathcal C(x)=|x|^{-1}$, put
$E_{\mathrm C}(\eta)=\iint\mathcal C(x-y)\,d\eta(x)\,d\eta(y)$
whenever the integral is defined.

\begin{lemma}\label{lem:coulomb}
The probability measure
\[
d\mu_{\mathrm C}(z)=\frac{\indicator{\{|z|<1\}}}{2\pi\sqrt{1-|z|^2}}\,dz
\]
has Coulomb potential $\pi/2$ on the closed unit disk. It uniquely
minimizes the energy with kernel $|x-y|^{-1}$ among probability
measures on that disk.
\end{lemma}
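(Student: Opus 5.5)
The plan has two parts. First compute the Coulomb potential of $\mu_{\mathrm C}$ on the closed disk and show it equals $\pi/2$. Then deduce unique minimality from a variational identity, using the positive definiteness of $|x|^{-1}$ on $\R^2$.

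\emph{Step 1: the potential.} I would lift to $\R^3$, where the unit disk in the plane $z=0$ is the degenerate ellipsoid given by the limit of oblate spheroids. The density $\propto(1-|z|^2)^{-1/2}$ is the classical conductor charge obtained by projecting uniform surface measure of the unit sphere onto the plane. Indeed, normalized area measure on $S^2$, projected vertically, has density $\frac{1}{2\pi\sqrt{1-|z|^2}}$ (two hemispheres, each with Jacobian $1/\sqrt{1-|z|^2}$, total mass $4\pi$ normalized). This alone does not give the potential, however.

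More directly, I would take Copson's formula for the electrified disk. For $|x|=\rho\le1$, write the potential in polar coordinates centered at $x$:
\[
\int\frac{d\mu_{\mathrm C}(z)}{|x-z|}=\frac1{2\pi}\int_0^{2\pi}\int_0^{s_+(\phi)}\frac{ds}{\sqrt{1-\rho^2-2\rho s\cos\phi-s^2}}\,d\phi .
\]
Here $s_+(\phi)$ is the positive root of $s^2+2\rho s\cos\phi-(1-\rho^2)=0$. The inner integral is an arcsine:
\[
\int_0^{s_+}\frac{ds}{\sqrt{A^2-(s+\rho\cos\phi)^2}}=\frac\pi2-\arcsin\frac{\rho\cos\phi}{A},\qquad A^2=1-\rho^2\sin^2\phi .
\]
The arcsine term is odd under $\phi\mapsto\phi+\pi$, so it integrates to zero, and the potential is $\frac1{2\pi}\cdot2\pi\cdot\frac\pi2=\pi/2$. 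At $\rho=1$ the same computation holds, with $A=|\cos\phi|$ and half the directions giving $s_+=0$. Either way the value is continuous up to the edge by the estimate of Lemma~\ref{lem:trace}, applied with the Coulomb kernel, whose proof used only the Coulomb bound.

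\emph{Step 2: uniqueness.} In $\R^2$ one has $\widehat{\mathcal C}(\xi)=2\pi/|\xi|$, positive. So for a finite signed measure $\eta$ with finite Coulomb energy,
\[
E_{\mathrm C}(\eta)=(2\pi)^{-2}\int2\pi|\widehat\eta|^2/|\xi|\,d\xi\ge0,
\]
with equality only if $\eta=0$, by the same Fourier argument as in Section~\ref{sec:kernels}. One can justify this via $|x|^{-1}=\pi^{-1/2}\int_0^\infty t^{-1/2}e^{-t|x|^2}dt$ and Fubini, which gives $E_{\mathrm C}(\eta)$ as a positive superposition of Gaussian energies.

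Let $\nu$ be a probability measure on the closed disk with $E_{\mathrm C}(\nu)<\infty$; measures of infinite energy are trivially not minimizers. The analogue of \eqref{eq:variation} gives
\[
E_{\mathrm C}(\nu)-E_{\mathrm C}(\mu_{\mathrm C})=E_{\mathrm C}(\nu-\mu_{\mathrm C})+2\int U^{\mathrm C}_{\mu_{\mathrm C}}\,d(\nu-\mu_{\mathrm C}).
\]
The last integral vanishes because the potential is the constant $\pi/2$ on the whole closed disk and both measures have mass one. Hence the difference is $E_{\mathrm C}(\nu-\mu_{\mathrm C})\ge0$, with equality only if $\nu=\mu_{\mathrm C}$.

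The main obstacle is rigor in Step 2: showing that $E_{\mathrm C}(\nu-\mu_{\mathrm C})$ is finite and that the cross terms split. This holds because $\mu_{\mathrm C}$ has bounded potential and $\nu$ has finite energy, so all cross integrals converge absolutely by Tonelli. The evaluation in Step 1 is classical, and only the edge case $\rho=1$ needs care.
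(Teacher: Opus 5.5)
Your proof is correct. Its skeleton is the paper's: the potential is constant on the closed disk, and the variational identity plus strict positive definiteness then give unique minimality. The difference is that you prove by hand the two ingredients the paper takes from the literature.

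\textbf{The potential.} The paper takes Copson's unit-potential density $1/(\pi^2\sqrt{1-|z|^2})$, normalizes it by its mass $2/\pi$, and gets the value $\pi/2$ for $0<|z|<1$. It reaches the edge only through the near-field bound \eqref{eq:nearfield} and the truncation argument of Lemma~\ref{lem:trace}. Your polar computation about the observation point is valid for every $\rho\in[0,1]$. At $\rho=1$ the inner integral equals $\pi$ for directions with $\cos\phi<0$ and $0$ for the others, so Tonelli gives the edge value $\pi/2$ with no continuity argument. Your appeal to Lemma~\ref{lem:trace} is therefore redundant, and the opening remark on projected sphere measure is never used.

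\textbf{Uniqueness.} The paper cites Fuglede twice: for strict positive definiteness of $|x-y|^{-1}$ on finite-energy signed measures, and for the criterion that a potential at least the energy throughout the set forces minimality. You instead prove positive definiteness with the subordination $|x|^{-1}=\pi^{-1/2}\int_0^\infty t^{-1/2}e^{-t|x|^2}\,dt$, the same Gaussian device used in Section~\ref{sec:kernels}. You then apply the analogue of \eqref{eq:variation} directly. The absolute convergence of every cross term follows, as you say, from $E_{\mathrm C}(\nu)<\infty$, $E_{\mathrm C}(\mu_{\mathrm C})=\pi/2$ and $\int U^{\mathrm C}_{\mu_{\mathrm C}}\,d\nu=\pi/2$.

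\textbf{Comparison.} The paper's route is shorter and rests on classical references. Yours is self-contained and can be checked line by line, at the cost of an explicit arcsine computation and the Fourier justification.
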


\begin{proof}
For the kernel $|x-y|^{-1}$ the density holding the unit disk at
unit potential is $1/(\pi^2\sqrt{1-|z|^2})$, counted on both faces
\cite[\S4(i)]{copson1947}, and dividing by its mass $2/\pi$ yields
$\mu_{\mathrm C}$, with Coulomb potential $\pi/2$ for
$0<|z|<1$. The density of $\mu_{\mathrm C}$ is at most
$(2\pi)^{-1}(1-|z|^2)^{-1/2}$, so the near-field bound
\eqref{eq:nearfield} and the truncation argument in the proof
of Lemma~\ref{lem:trace} make the potential continuous on
$\R^3$. Hence it equals $\pi/2$ on the closed disk.

The Coulomb kernel is strictly positive definite on finite-energy signed measures in $\R^2$
\cite[\S8.1]{fuglede1960}. A finite energy probability measure on a compact
set whose potential is at least its energy
throughout the set minimizes energy among probability measures on the
set, and strict positive definiteness makes the minimizer unique
\cite[Remark~2 after Theorem~2.4]{fuglede1960}. On the closed disk the
potential of $\mu_{\mathrm C}$ equals $\pi/2=E_{\mathrm C}(\mu_{\mathrm C})$.
\end{proof}

For the rest of this section take $N=2$ and minimize exponential
energy over probability measures on $\overline B_R$, with minimizer
$\mu_R$, $U=K*\mu_R$, $\gamma=\int U\,d\mu_R$
and $J=L*\mu_R$. Here $\sigma_R$ is normalized arclength and
$j_R=L*\sigma_R$.

For $R>R_2$, Proposition~\ref{prop:circle} excludes the
boundary regime. Denote the inner radius, interior density and outer
mass in Proposition~\ref{prop:support} by $a$, $f_R$ and $q$,
respectively. Contact on the central disk requires
\[
\int_{B_a}L(x-y)f_R(|y|)\,dy=\gamma-qj_R(x)
\quad\hbox{for almost every }x\in B_a.
\]

Copson solves the unscreened disk at prescribed potential by an Abel
construction \cite{copson1947}, and Agra, Trizac and Bocquet reduce the
screened constant-potential problem to a Fredholm equation by dual
integral methods, observing the same inverse-square-root edge law by a
local analysis \cite{agratrizacbocquet2004}.

The factor $1/\pi^2$ in $\mathcal T$ below makes
$\mathcal A_a\mathcal T1=1/(\pi^2\sqrt{a^2-r^2})$ the density holding
$B_a$ at unit Coulomb potential, as in the proof of
Lemma~\ref{lem:coulomb}.

\subsection{Abel inversion}\label{sec:abel}

For a trial inner radius $0<a<R$, consider
$L*(f\,dx)=v$ on $B_a$. Densities on $B_a$ are extended by zero
when forming whole-plane convolutions. The Abel inversion
separates the possible edge singularity from the regular unknown $g$, placing it in the
explicit term $g(a)/\sqrt{a^2- r^2}$. Once the Coulomb part has been inverted, screening
becomes an operator correction of norm less than one. The cosine kernel in the Abel reduction occurs in
\cite[eqs.~(36)--(38)]{seki2026}. For a smooth radial function $v$ on $\overline B_a$ define
\begin{equation}\label{eq:abeldata}
\begin{split}
(\mathcal Tv)(t)&=\frac1{\pi^2}\frac{d}{dt}
  \int_0^t\frac{r v(r)}{\sqrt{t^2-r^2}}\,dr,\\
h(x)&=\int_0^{\pi/2}\sin\theta\,e^{-|x|\sin\theta}\,d\theta,\\
(\mathcal P_ag)(t)&=\frac1\pi\int_0^a
  \bigl[h(t-s)+h(t+s)\bigr]g(s)\,ds.
\end{split}
\end{equation}
For $g\in C^2[0,a]$ with $g'(0)=0$, set
\begin{equation}\label{eq:abelinverse}
(\mathcal A_ag)(r)=\frac{g(a)}{\sqrt{a^2-r^2}}
-\int_r^a\frac{g'(t)}{\sqrt{t^2-r^2}}\,dt,
\quad 0\le r<a.
\end{equation}

\begin{lemma}\label{lem:abel}
The equation $(I-\mathcal P_a)g=\mathcal Tv$ has a unique continuous solution,
given by its uniformly convergent Neumann series. The operator $\mathcal P_a$ satisfies
\begin{equation}\label{eq:contraction}
\|\mathcal P_a\|_{C[0,a]\to C[0,a]}=\omega(a)
=1-\frac2\pi\int_0^{\pi/2}e^{-a\sin\theta}\,d\theta<1.
\end{equation}
The solution belongs to $C^2[0,a]$, satisfies $g'(0)=0$, and
$f=\mathcal A_ag$ solves $L*(f\,dx)=v$. The density is integrable and
$f(r)=O((a-r)^{-1/2})$ at the edge. Its $L$-potential is continuous
across $\partial B_a$, with value $v(a)$ there. For
$g^{(n)}=\sum_{j=0}^n\mathcal P_a^j\mathcal Tv$,
\begin{equation}\label{eq:error}
\|g-g^{(n)}\|_\infty\le
\frac{\omega(a)^{n+1}}{1-\omega(a)}\|\mathcal Tv\|_\infty.
\end{equation}
\end{lemma}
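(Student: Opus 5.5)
The plan is to derive the Abel reduction first, then treat the contraction bound and the regularity as consequences of it. For a radial density of the form $f=\mathcal A_ag$, write $L$ as the planar trace of the three-dimensional Yukawa kernel and use the standard representation of a radial disk density as a superposition over $t\in(0,a)$ of Abel-type layers, in the spirit of Copson. Concretely, I would use $e^{-|X|}/|X|=\int$ of cosine or plane-wave type factors and verify the following. The $L$-potential of $\mathcal A_ag$, evaluated at radius $r<a$ and then Abel-transformed via $\frac{1}{\pi^2}\frac{d}{dt}\int_0^t r(\cdot)(t^2-r^2)^{-1/2}\,dr$, equals $g(t)-(\mathcal P_ag)(t)$. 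The unscreened (Coulomb) part gives exactly $g(t)$, which is the normalization explained before the lemma: $\mathcal A_a\mathcal T1$ is the unit-potential Coulomb density. The screening correction $e^{-|X|}-1$ produces the smooth kernel $\frac1\pi[h(t-s)+h(t+s)]$, where $h$ comes from integrating $e^{-|x|\sin\theta}$ over the polar angle of the plane-wave decomposition. Since $\mathcal T$ is injective on smooth radial data (Abel inversion), $L*(f\,dx)=v$ on $B_a$ is equivalent to $(I-\mathcal P_a)g=\mathcal Tv$. The reverse direction follows by applying the inverse Abel transform to $\mathcal Tv$.

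Next comes the norm computation. The kernel is positive, so the operator norm on $C[0,a]$ is $\sup_t\frac1\pi\int_0^a[h(t-s)+h(t+s)]\,ds=\sup_t\frac1\pi\int_{t-a}^{t+a}h$. Because $h$ is even and decreasing in $|x|$, this is maximized at $t=0$, giving $\frac2\pi\int_0^a h(x)\,dx=\frac2\pi\int_0^{\pi/2}(1-e^{-a\sin\theta})\,d\theta=\omega(a)$. One has $\omega(a)<1$ trivially. The Neumann series then converges uniformly, the solution is unique in $C[0,a]$, and the tail estimate \eqref{eq:error} is the geometric bound $\sum_{j>n}\omega^j\|\mathcal Tv\|_\infty$.

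For regularity, I would bootstrap through $g=\mathcal Tv+\mathcal P_ag$. The function $\mathcal Tv$ is smooth on $[0,a]$ for smooth radial $v$: substituting $r=t\rho$ gives $\mathcal Tv(t)=\frac{1}{\pi^2}\frac{d}{dt}\bigl(t\int_0^1\rho v(t\rho)(1-\rho^2)^{-1/2}\,d\rho\bigr)$, and this is even-extendable, so its derivative vanishes at $0$. The operator $\mathcal P_ag$ is $C^2$ for continuous $g$ since $h$ is smooth away from $0$ and $h'$ is bounded with a logarithmic-type $h''$. Here I expect the main obstacle: $h''$ has a singularity at $0$, roughly $\int\sin^3\theta\,e^{-|x|\sin\theta}$ plus a jump of $h'$. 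I would handle it by writing $\frac{d}{dt}\int h(t\mp s)g(s)\,ds$ and integrating by parts to move one derivative onto $g$, which is already $C^1$ after one bootstrap step. Evenness of the kernel in $t$ gives $(\mathcal P_ag)'(0)=0$.

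Finally, the density bounds follow from \eqref{eq:abelinverse}. The first term is $O((a-r)^{-1/2})$, and the second is bounded by $\|g'\|_\infty\,\mathrm{arccosh}(a/r)$, which is integrable. Continuity of the $L$-potential up to $\partial B_a$, with value $v(a)$, follows from Lemma~\ref{lem:trace}, using the bound $|f|\le C(a^2-r^2)^{-1/2}$ near the edge. Near the center the logarithmic term is integrable in two dimensions, so the required bound holds after splitting off a bounded-density part handled directly.
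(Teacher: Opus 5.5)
Your proposal follows essentially the same route as the paper: Copson's Abel representation read through the transform of $L$ (in the paper, $\int_0^a f J_0(\xi r)r\,dr=\int_0^a g\cos(\xi t)\,dt$, $\mathcal T[J_0(\xi\,\cdot)]=\cos(\xi t)/\pi^2$, and the cosine transform of $1-\xi/\sqrt{1+\xi^2}$ equal to $h$), the row-sum norm maximized at $t=0$ by evenness and monotonicity of $h$, the regularity bootstrap through $g=\mathcal Tv+\mathcal P_ag$, and Lemma~\ref{lem:trace} plus injectivity of the Abel transform to get $L*(f\,dx)=v$ up to $\partial B_a$; note that injectivity must be applied to continuous data, since $L*(f\,dx)-v$ is a priori only continuous. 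Two small slips: $h''(x)=\int_0^{\pi/2}\sin^3\theta\,e^{-|x|\sin\theta}\,d\theta$ is bounded for $x\ne0$, so the only irregularity is the jump $-\pi/2$ of $h'$ at zero; and $g'(0)=0$ with $g\in C^2[0,a]$ gives $|g'(t)|\le Ct$, so the integral term of $\mathcal A_ag$ is at most $C\sqrt{a^2-r^2}$ and $|f(r)|\le C(a^2-r^2)^{-1/2}$ holds on all of $[0,a)$, making your logarithmic splitting at the center unnecessary.
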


\begin{proof}
Let $J_0$ denote the Bessel function of the first kind and order zero.
Termwise integration of its power series gives
\[
\int_0^t\frac{rJ_0(\xi r)}{\sqrt{t^2-r^2}}\,dr
=\frac{\sin(\xi t)}{\xi},
\]
with value $t$ at $\xi=0$. Substituting the Abel formula for $f$,
Fubini's theorem and integration by parts therefore give
\[
\int_0^a f(r)J_0(\xi r)r\,dr=\int_0^a g(t)\cos(\xi t)\,dt,
\qquad
\mathcal T[J_0(\xi\,\cdot)](t)=\frac{\cos(\xi t)}{\pi^2}.
\]
To invert the Coulomb multiplier insert $e^{-\varepsilon\xi}$
and use
\[
\int_0^\infty e^{-\varepsilon\xi}J_0(r\xi)\cos(t\xi)\,d\xi
=\Re\bigl(r^2+(\varepsilon-it)^2\bigr)^{-1/2}.
\]

The square root has positive real part. The angular representation
of $J_0$ gives this Laplace transform.
As $\varepsilon\downarrow0$, its right-hand side tends to
$(r^2-t^2)^{-1/2}$ for $t<r$ and to zero for $t>r$.
Its modulus is bounded by $|r^2-t^2|^{-1/2}$, an integrable
majorant in $t$ for fixed $r>0$. Fubini's theorem for
$\varepsilon>0$ and dominated convergence therefore give
\[
\mathcal C*(f\,dx)(r)=2\pi\int_0^{\min(r,a)}
  \frac{g(t)}{\sqrt{r^2-t^2}}\,dt,\quad r>0.
\]
One integration by parts makes
the cosine integral $O(\xi^{-1})$. For fixed $r>0$,
$J_0(r\xi)=O(\xi^{-1/2})$, so the Hankel integral for
$L*(f\,dx)(r)$ converges absolutely. The same exponential
factor justifies its interchanges, and dominated convergence allows
$\varepsilon\downarrow0$.

The multiplier $\widehat L(\xi)=2\pi/\sqrt{1+\xi^2}$ then turns
$L*(f\,dx)=v$ into
\[
g(t)+\frac2\pi\int_0^a g(s)
\int_0^\infty\left(\frac{\xi}{\sqrt{1+\xi^2}}-1\right)
  \cos(\xi t)\cos(\xi s)\,d\xi\,ds=\mathcal Tv(t).
\]
The inner integral is absolutely convergent. The substitution
$\tau=\sin^2\theta$ leads to the identity
\[
1-\sqrt{\frac{z}{1+z}}
=\frac1\pi\int_0^1\frac{\sqrt{\tau/(1-\tau)}}{z+\tau}\,d\tau,\quad z>0,
\]
and with the cosine transform of $(\xi^2+\tau)^{-1}$ we get
\[
\int_0^\infty\left(1-\frac{\xi}{\sqrt{1+\xi^2}}\right)\cos(\xi x)\,d\xi
=h(x).
\]
With the product-to-sum identity, the integral equation becomes
$(I-\mathcal P_a)g=\mathcal Tv$.

The function $h$ is positive and even, strictly decreasing on
$[0,\infty)$, with $\int_{\R}h=\pi$. Thus
\[
\int_0^a[h(t-s)+h(t+s)]\,ds
=\int_{t-a}^{t+a}h(u)\,du
\le\int_{-a}^a h(u)\,du.
\]
Equality holds at $t=0$. Integrating the defining formula for $h$
proves \eqref{eq:contraction} and summing the geometric series bounds the remainder as in
\eqref{eq:error}.

The kernel $h$ is Lipschitz. Its derivative jumps by $-\pi/2$ at zero and
its distributional second derivative is a continuous function minus
$(\pi/2)\delta_0$, so convolution with the continuous even extension
of $g$ on $[-a,a]$ has two continuous derivatives on $(0,a)$
with one-sided limits at the endpoints, and by the equation
$g\in C^2[0,a]$ with $g'(0)=0$. Since $|g'(t)|\le Ct$, formula
\eqref{eq:abelinverse} bounds $|f(r)|$ by $C(a^2-r^2)^{-1/2}$
on $[0,a)$. Hence $f$ is integrable, with the asserted edge bound. By
Lemma~\ref{lem:trace} the $L$-potential of $f$ is finite and
continuous on $\R^2$, hence $u=L*(f\,dx)-v$ is continuous on $[0,a]$.
Split $L*(f\,dx)$ into $\mathcal C*(f\,dx)$ and a Hankel integral
with multiplier $\xi(1+\xi^2)^{-1/2}-1=O(\xi^{-2})$.
The Abel identity applies to the first term, and $\mathcal T$ passes
under the integral in the second. Therefore
$\mathcal T(L*(f\,dx))=(I-\mathcal P_a)g=\mathcal Tv$, and
$\int_0^t ru(r)(t^2-r^2)^{-1/2}\,dr$ has zero derivative on $(0,a)$;
it is at most $t\|u\|_\infty$ in modulus, hence vanishes identically.
Integrating it against $t(s^2-t^2)^{-1/2}$ over $0<t<s$ leaves
$\frac\pi2\int_0^s ru(r)\,dr=0$ for every $s\le a$, so $u=0$ on
$[0,a]$, and the $L$-potential equals $v$ on the closed disk, with
value $v(a)$ on its edge.
\end{proof}

\subsection{Positivity of densities}

We expand $\mathcal Tj_R$ in powers of $t$. For complex $r$ with $|r|<R$ the
squared distance $(R-re^{i\theta})(R-re^{-i\theta})$ from $re_1$ to the
circle stays off $(-\infty,0]$, so $j_R$ extends holomorphically to that
disk and its Taylor series converges on $[0,R)$. Averaging the Taylor
series of $L$ over circles leads, for $0\le t<R$, to
\begin{equation}\label{eq:series}
\pi^2(\mathcal Tj_R)(t)=\sum_{n=0}^\infty
  \frac{(\Delta^nL)(R)}{(2n)!}\,t^{2n}.
\end{equation}
The coefficient of $r^{2n}$ in $j_R(r)$ is
$(\Delta^nL)(R)/(4^n(n!)^2)$, and $\pi^2\mathcal T$ multiplies
that monomial by $4^n(n!)^2/(2n)!$. Every coefficient in
\eqref{eq:series} is positive, since
\[
\Delta(e^{-r}r^{-m})
=e^{-r}\bigl(r^{-m}+(2m-1)r^{-m-1}+m^2r^{-m-2}\bigr),
\quad m\ge1.
\]
Consequently $\pi^2(\mathcal Tj_R)(0)=e^{-R}/R$ and $(\mathcal Tj_R)'(t)>0$ for
$0<t<R$.

\begin{lemma}\label{lem:positivity}
Let $0<a<R$ and $k>0$. If
$(I-\mathcal P_a)g=\mathcal T(1-kj_R)$ and $f=\mathcal A_ag$, then
\[
f\ge0\text{ on }(0,a)\Longleftrightarrow g(a)\ge0.
\]
Under either condition, $-g'(t)>0$ for $0<t\le a$ and
$f(r)>0$ for every $0\le r<a$.
\end{lemma}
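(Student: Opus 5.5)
The plan is to differentiate the Fredholm equation once in $t$, so that $g'$ satisfies an integral equation with a nonnegative kernel. The forcing is then signed by the monotonicity of $\mathcal Tj_R$ from \eqref{eq:series} and by the sign of $g(a)$.

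First I will compute the data. From \eqref{eq:abeldata}, $\mathcal T1=\pi^{-2}\,\frac{d}{dt}\int_0^t r(t^2-r^2)^{-1/2}\,dr=\pi^{-2}$. So the right side is $F(t)=\pi^{-2}-k(\mathcal Tj_R)(t)$. By the consequence of \eqref{eq:series}, $F'(t)<0$ for $0<t\le a$; note $a<R$. By Lemma~\ref{lem:abel}, $g\in C^2[0,a]$ with $g'(0)=0$, so we may differentiate $g=\mathcal P_ag+F$. In
\[
(\mathcal P_ag)'(t)=\frac1\pi\int_0^a\bigl[h'(t-s)+h'(t+s)\bigr]g(s)\,ds
\]
I write $h'(t-s)=-\partial_sh(t-s)$ and $h'(t+s)=\partial_sh(t+s)$ and integrate by parts in $s$. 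This is legitimate because $h$ is Lipschitz. The boundary terms at $s=0$ cancel, and we obtain
\[
g'(t)=\frac1\pi\int_0^a\bigl[h(t-s)-h(t+s)\bigr]g'(s)\,ds
+\frac{g(a)}\pi\bigl[h(t+a)-h(a-t)\bigr]+F'(t).
\]

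Next come the signs. The function $h$ is even and strictly decreasing on $[0,\infty)$. For $t,s>0$ we have $|t-s|<t+s$, so the kernel $h(t-s)-h(t+s)$ is nonnegative. It is also dominated by $h(t-s)+h(t+s)$, so the operator $\mathcal Q_a$ it defines on $C[0,a]$ has norm at most $\omega(a)<1$ by \eqref{eq:contraction}. For $0<t\le a$ we have $t+a>a-t\ge0$, so $h(t+a)-h(a-t)<0$.

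Hence if $g(a)\ge0$, the forcing $\phi=F'+\pi^{-1}g(a)[h(\cdot+a)-h(a-\cdot)]$ is continuous on $[0,a]$, is $\le0$, and is strictly negative on $(0,a]$. The equation $g'=\mathcal Q_ag'+\phi$ has the unique solution $g'=\sum_j\mathcal Q_a^j\phi$. Every term is $\le0$ because $\mathcal Q_a$ is positivity preserving, and the $j=0$ term is strictly negative on $(0,a]$. Therefore $-g'(t)>0$ for $0<t\le a$.

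Plugging this into \eqref{eq:abelinverse} shows that both terms of $f=\mathcal A_ag$ are nonnegative, the integral term strictly so for $r<a$. Hence $f(r)>0$ for all $0\le r<a$.

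For the converse, suppose $f\ge0$ on $(0,a)$. Since $g'$ is bounded and $\int_r^a(t^2-r^2)^{-1/2}\,dt\to0$ as $r\uparrow a$, we get $f(r)\sqrt{a^2-r^2}\to g(a)$. If $g(a)<0$, then $f$ would be negative near the edge. So $g(a)\ge0$, and the positivity conclusions follow from the previous step.

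The only delicate point I expect is justifying the differentiation under $\mathcal P_a$ and the integration by parts at $s=t$, where $h'$ jumps. I would handle it using the regularity already established in Lemma~\ref{lem:abel}: the $C^2$ property and the description of $h''$ as a continuous function minus $(\pi/2)\delta_0$. I would check directly that the boundary contributions at $s=t$ cancel, because $h$ itself is continuous there.
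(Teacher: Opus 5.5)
Your proposal is correct and follows the paper's proof essentially step for step. You differentiate $g=\mathcal Tv+\mathcal P_ag$ and integrate by parts in $s$, using $d(t,0)=0$, to get $(I-\mathcal Q_a)(-g')=-(\mathcal Tv)'+\pi^{-1}d(t,a)g(a)$ with the nonnegative kernel $d(t,s)=h(t-s)-h(t+s)$; you then invert by the positive Neumann series, read off positivity from the Abel formula, and obtain the converse from $\sqrt{a^2-r^2}\,f(r)\to g(a)$. The point you flag as delicate is harmless: $s\mapsto h(t\pm s)$ is Lipschitz and $g\in C^1$, so integration by parts for absolutely continuous functions applies across $s=t$ with no separate check.
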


\begin{proof}
Put $v=1-kj_R$, and define
\[
d(t,s)=h(t-s)-h(t+s),\qquad
(\mathcal Q_au)(t)=\frac1\pi\int_0^a d(t,s)u(s)\,ds.
\]
The kernel is nonnegative and strictly positive for $t,s>0$, and
$\|\mathcal Q_a\|\le\omega(a)<1$. Differentiating $g=\mathcal Tv+\mathcal P_ag$ and integrating
by parts in $s$ yields
\begin{equation}\label{eq:derivative}
(I-\mathcal Q_a)(-g')(t)=-(\mathcal Tv)'(t)+\frac{d(t,a)}\pi g(a).
\end{equation}
Here $d(t,0)=0$ and
$\partial_t[h(t-s)+h(t+s)]=-\partial_s d(t,s)$.
Since $\mathcal T1=1/\pi^2$, $-(\mathcal Tv)'=k(\mathcal Tj_R)'>0$ on
$(0,a]$. If $g(a)\ge0$ the right-hand side is therefore positive for
$t>0$, and
the positive Neumann inverse of $I-\mathcal Q_a$ shows $-g'(t)>0$;
both terms in \eqref{eq:abelinverse} are then
nonnegative, the integral positive for every $r<a$.
Conversely
$\sqrt{a^2-r^2}\,f(r)\to g(a)$ as $r\uparrow a$.
So $f\ge0$ forces $g(a)\ge0$.
\end{proof}

For data $1 - kj_R$, linearity lets us work with the two component solutions. For each trial radius compute
\begin{equation}\label{eq:profiles}
\begin{aligned}
g_1&=(I-\mathcal P_a)^{-1}\mathcal T1,&
\phi_a&=\mathcal A_ag_1,\\
g_2&=(I-\mathcal P_a)^{-1}\mathcal Tj_R,&
\psi_{a,R}&=\mathcal A_ag_2,
\end{aligned}
\end{equation}

and set
\begin{equation}\label{eq:potentials}
F_a=K*(\phi_a\,dx),\qquad
H_{a,R}=K*\sigma_R-K*(\psi_{a,R}\,dx).
\end{equation}

On the disk, the discrepancy $F_a+kH_{a,R}-1$ solves $(1-\Delta)u=0$ and radiality with
regularity at the origin confines it to a multiple of $I_0$. Choosing $k$ for contact at the
center cancels this mode on the entire disk, leaving outer contact and nonnegativity
of the Abel edge coefficient to select $a$.

\begin{lemma}\label{lem:comparison}
For every $0<a<R$, $\phi_a>0$ in $B_a$,
$F_a(x)<1$, and $H_{a,R}(x)>0$ for all $x\in\R^2$.
\end{lemma}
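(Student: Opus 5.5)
Three assertions are at stake. The first is positivity of $\phi_a$. The second is $F_a<1$ everywhere, which is a maximum-principle argument on the planar potential, using the structure $(1-\Delta)F_a=L*(\phi_a\,dx)$. The third is positivity of $H_{a,R}$, which I would obtain through the three-dimensional lift of Lemma~\ref{lem:lift}.

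For $\phi_a$, I apply Lemma~\ref{lem:positivity} in the degenerate case $k=0$, or rather its proof: with $v=1$ we have $\mathcal T1=1/\pi^2$, so $(\mathcal Tv)'=0$, and \eqref{eq:derivative} reads $(I-\mathcal Q_a)(-g_1')=d(\cdot,a)g_1(a)/\pi$. Positivity of $g_1(a)$ comes first. The kernel of $\mathcal P_a$ is nonnegative and $\mathcal T1>0$, so the Neumann series in Lemma~\ref{lem:abel} gives $g_1\ge\mathcal T1=1/\pi^2>0$. Then $-g_1'>0$ on $(0,a]$, and both terms of \eqref{eq:abelinverse} are positive, so $\phi_a>0$ on $B_a$.

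For $F_a<1$, Lemma~\ref{lem:abel} gives $J_a:=L*(\phi_a\,dx)=1$ on $\overline B_a$. Outside the disk $J_a$ is the trace of a positive three-dimensional Yukawa potential. That potential equals $1$ on the disk, is continuous by Lemma~\ref{lem:trace}, solves $(1-\Delta_3)=0$ off the disk, and decays at infinity. The maximum principle therefore gives $0<J_a<1$ off $\overline B_a$, so $J_a\le1$ on $\R^2$ with strict inequality outside the closed disk. Now $F_a=K*(\phi_a\,dx)$ is continuous, tends to $0$ at infinity, and satisfies $(1-\Delta)F_a=J_a$ in the distributional sense. Consider $u=F_a-1$, so that $(1-\Delta)u=J_a-1\le0$. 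If $u$ had a nonnegative maximum we would need an argument via $u=G*(J_a-1)$, and this gives the cleanest route: since $(1-\Delta)^{-1}$ is convolution with the positive kernel $(2\pi)^{-1}K_0$, we get
\[
F_a-1=\tfrac1{2\pi}K_0*(J_a-1),
\]
which is valid because $J_a-1$ is bounded and $1=\tfrac1{2\pi}K_0*1$. The integrand is $\le0$ everywhere and strictly negative on the open set $\R^2\setminus\overline B_a$. Since $K_0>0$, it follows that $F_a<1$ at every point.

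For $H_{a,R}$, the function $\psi_{a,R}$ is radial with $|\psi|\le C(a^2-r^2)^{-1/2}$, by the edge bound in Lemma~\ref{lem:abel} (which gives $|g'|\le Ct$), and $L*(\psi\,dx)=j_R$ on $B_a$. Lemma~\ref{lem:lift} then shows that $j_R-L*(\psi\,dx)\ge0$ on $\R^2$, that it is zero on $\overline B_a$, and that it is positive elsewhere and integrable. The same Bessel-kernel representation gives
\[
H_{a,R}=\tfrac1{2\pi}K_0*\bigl(j_R-L*(\psi\,dx)\bigr),
\]
using $(1-\Delta)K=L$ and the corresponding identities for the two measures. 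The right side is the convolution of a positive kernel with a nonnegative function that is positive on a set of positive measure, hence strictly positive at every $x$.

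The main obstacle is justifying $K=\tfrac1{2\pi}K_0*L$, together with the interchange of convolutions for the signed, merely integrable data with an edge singularity. I would verify the kernel identity on the Fourier side, where $\widehat{K_0}/(2\pi)\cdot\widehat L=2\pi(1+|\xi|^2)^{-3/2}=\widehat K$. Tonelli applied to absolute values then handles the interchange, since $\phi_a$ and $\psi_{a,R}$ are integrable by Lemma~\ref{lem:abel}.
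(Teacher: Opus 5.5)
Your proof is correct. Its skeleton matches the paper's: positivity of the Abel data gives $\phi_a>0$; then $L*(\phi_a\,dx)<1$ off the disk, combined with the strictly positive resolvent kernel of integral one, gives $F_a<1$; and Lemma~\ref{lem:lift} followed by the same resolvent gives $H_{a,R}>0$. The last part is the paper's argument, and your Fourier check of $K=\frac1{2\pi}K_0*L$ with the Tonelli/Fubini interchange supplies the detail behind the paper's single phrase about the positive resolvent.

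Two sub-steps are argued differently.

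\emph{Positivity of $\phi_a$.} The paper does not use \eqref{eq:derivative}. It observes that the convolution of two nonnegative even functions, nonincreasing on $[0,\infty)$, is again of that type: first for indicators of centered intervals, then by level-set representations. It applies this iterate by iterate to the Neumann series for $g_1$, started at $1/\pi^2$ and extended by zero outside $[-a,a]$. This shows $g_1$ is nonincreasing with $g_1(a)>0$. Your reuse of \eqref{eq:derivative} with $k=0$ is shorter given the machinery already built for Lemma~\ref{lem:positivity}, which precedes this lemma. It also gives the strict inequality $-g_1'>0$ on $(0,a]$, which is more than the Abel formula needs.

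\emph{The bound $L*(\phi_a\,dx)<1$ outside the disk.} The paper uses a one-line radial argument. For $r>a$ and $|y|<a$, the distance $|re_1-y|$ increases strictly in $r$, so the potential decreases strictly from its edge value $1$. Your three-dimensional maximum principle reaches the same conclusion and does not use radiality. However, it relies on continuity of the lift from Lemma~\ref{lem:trace}, which the paper's argument avoids.
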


\begin{proof}
The convolution of the indicators of two centered intervals is even
and nonincreasing on $[0,\infty)$, since it is their overlap length
after translation. Integrating the level-set representations extends
this conclusion to nonnegative even nonincreasing functions.
Apply that to the Neumann series for $g_1$, whose initial
term is $1/\pi^2$, extending each iterate by zero outside $[-a,a]$;
then $g_1$ is nonnegative and decreasing with $g_1(a)>0$, and the
Abel formula makes $\phi_a$ positive.
Its $L$-potential $J_\phi$ equals one on the disk and is continuous
at the edge. Outside the disk every source distance increases
strictly with the observation radius, hence $J_\phi<1$
there, and since the positive resolvent $(1-\Delta)^{-1}$ has a
strictly positive kernel of integral one,
$F_a=(1-\Delta)^{-1}J_\phi<1$ everywhere.

For the second sign apply Lemma~\ref{lem:lift} with
$\psi=\psi_{a,R}$. The planar trace $j_R-L*(\psi_{a,R}\,dx)$
is an integrable function, zero on the disk and positive off it, and
the strictly positive two-dimensional resolvent applied to it makes
$H_{a,R}$ positive.
\end{proof}

\subsection{Annular inequality}

\begin{lemma}\label{lem:annulus}
Let $0<a<R$ and let $\nu$ be a nonnegative finite radial measure
supported on $\overline B_a\cup\partial B_R$, with positive mass
on the outer circle. Put $U=K*\nu$ and $J=L*\nu$. Suppose
$U\in C^1$, $U=\gamma$ on $\overline B_a$, $U(R)=\gamma$,
and $J$ extends continuously from the annulus to $a$ with
$J(a)=\gamma$. Then $U(r)>\gamma$ for $a<r<R$.
After normalization, $\nu$ is the unique equilibrium measure
on $\overline B_R$.
\end{lemma}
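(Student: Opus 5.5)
The plan is to rerun the maximum principle argument of Proposition~\ref{prop:support} on the annulus $A=\{a<|x|<R\}$, now for the auxiliary function $J=(1-\Delta)U$. Afterwards we conclude with the variational criterion of Section~\ref{sec:kernels}.

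\emph{Step 1: sign of $J-\gamma$ in $A$.} The annulus carries no mass, so $J$ is smooth there. By \eqref{eq:plane} it is strictly subharmonic, since $\Delta L>0$ off the origin and every source point lies at positive distance. At the inner edge $J$ has the limit $J(a)=\gamma$ by hypothesis. At the outer edge the positive mass on $\partial B_R$ makes $J(r)\to+\infty$ as $r\uparrow R$, because $j(t,t)=+\infty$ in the plane and the one-sided limits come from monotone convergence or Fatou. So the maximum principle alone does not give $J<\gamma$ near $R$. Instead we use the radial monotonicity of $j(r,t)$ established before Lemma~\ref{lem:traces}. For $a<r<R$ the inner sources $t\le a$ contribute a strictly decreasing function of $r$, and the outer circle contributes a strictly increasing one. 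Hence $J$ is a sum of a decreasing and an increasing term. Combined with strict subharmonicity, this suggests that $J-\gamma$ changes sign at most once on $(a,R)$: negative near $a$ and positive near $R$.

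To make this rigorous, write the radial subharmonicity as $(rJ')'>0$. Then $rJ'$ is strictly increasing on $(a,R)$. Two cases follow. If $J'(a^+)\ge0$, then $J$ is increasing from $\gamma$. If $J'(a^+)<0$, then $J$ first decreases and then increases. In either case there is a $\rho\in[a,R)$ with $J<\gamma$ on $(a,\rho)$ and $J>\gamma$ on $(\rho,R)$; possibly $\rho=a$.

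\emph{Step 2: positivity of $u=U-\gamma$.} On $A$ we have $(1-\Delta)u=J-\gamma$, with boundary values $u(a)=0$ (and $u'(a)=0$, since $U\in C^1$ and $U$ is constant on the disk) and $u(R)=0$. First consider the region $(\rho,R)$, where $J>\gamma$. There $u$ is a supersolution. Suppose $u$ had a nonpositive minimum in the interior of $(a,R)$. If the minimum lies in $(\rho,R)$, then $(1-\Delta)u\le u\le0$ there, which contradicts $J-\gamma>0$. So any negative minimum lies in $(a,\rho]$.

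To treat that region, go back to the mechanism of Proposition~\ref{prop:support}. Near $a$ we expand $u$ from the contact data, $u(a)=u'(a)=0$, and $u''(a^+)=-(J(a^+)-\gamma)+\ldots$. This is the main obstacle: a subtle sign analysis is needed, because in $(a,\rho)$ the inequality $(1-\Delta)u<0$ pushes $u$ upward in the maximum principle sense.

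What we can show directly is that $u$ has no positive interior maximum in $(a,\rho)$. The natural route is then the following. On $(a,\rho)$, $u$ is a strict subsolution of $(1-\Delta)u<0$ with $u(a)=u'(a)=0$. Comparing with the radial $I_0$, $K_0$ solutions of $(1-\Delta)\varphi=0$, or simply integrating the radial ODE $(ru')'=r(u-(J-\gamma))$ from $a$, shows that $ru'>0$ as long as $u\ge0$. Indeed $(ru')'>0$ there, since $u\ge0$ and $J-\gamma<0$. So $u$ increases and is positive on $(a,\rho]$.

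On $(\rho,R)$, $u$ starts positive at $\rho$ and ends at $u(R)=0$. A nonpositive interior minimum there is excluded by the supersolution argument above. Hence $u>0$ on $(a,R)$, which is the first assertion.

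\emph{Step 3: equilibrium.} We now have $U\ge\gamma$ on $\overline B_R$, with equality on $\overline B_a\cup\partial B_R\supseteq\supp\nu$. Normalize $\nu$ to a probability measure $\nu/\nu(\R^2)$. Its potential is at least $\gamma/\nu(\R^2)$ on the ball, with equality on its support. By \eqref{eq:variation} and strict positive definiteness, it is the unique minimizer, that is, the unique equilibrium measure on $\overline B_R$.

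The step I expect to require the most care is the single-sign-change structure of $J-\gamma$ in Step 1. It must be obtained from $(rJ')'>0$ together with $J(a)=\gamma$ and $J\to+\infty$ at $R$. Step 2 is then an ODE integration from the contact edge.
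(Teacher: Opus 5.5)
Your proof is correct, but it reaches $U>\gamma$ on the annulus by a different route from the paper's. Write $h=J-\gamma$ and $u=U-\gamma$. The paper does not split the annulus at the crossing point. It takes the radial solution $v$ of $v''+v'/r-v=0$ with $v(R)=0$, $v'(R)=-1$, which is positive on $[a,R)$, and integrates the Wronskian identity $[r(vu'-v'u)]'=-rvh$ from $a$. The data $u(a)=u'(a)=0$ and $u(R)=v(R)=0$ give $\int_a^R rvh\,dr=0$. This one relation excludes the case $h>0$ throughout. Combined with the single crossing you also derive in Step~1, it makes every partial integral $\int_a^r svh\,ds$ negative, so $rv^2(u/v)'>0$ and $u/v$ increases from $0$.

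You use the crossing point $\rho$ directly instead: positivity of the Cauchy problem on $(a,\rho]$, then the minimum principle for the strict supersolution on $(\rho,R)$ with data $u(\rho)>0$, $u(R)=0$. Your version needs neither the auxiliary $v$ nor the justification of the boundary term at the singular endpoint $R$, that is, integrability of the logarithmic singularity of $h$ against $rv$ and the finite trace of $U'$. It uses $u(R)=0$ only as Dirichlet data. The paper's version handles the whole annulus with one identity and gives the stronger monotonicity of $u/v$. It also shows along the way that $J<\gamma$ just outside $\partial B_a$, so the case $\rho=a$ cannot occur. Your proof does not need that, but when $\rho=a$ you have $u(\rho)=0$, not $u(\rho)>0$. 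The minimum principle with zero data at both ends still gives $u>0$, so the conclusion stands.

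One step needs a line more. The claim that $ru'>0$ as long as $u\ge0$ presupposes $u\ge0$ on some interval $(a,a+\epsilon)$. That does not follow from $u(a)=u'(a)=0$ alone, since $h(a)=0$ as well. Variation of parameters with the modified Bessel functions closes it. For $a\le r\le\rho$, $u(r)=\int_a^r\bigl[I_0(r)K_0(s)-I_0(s)K_0(r)\bigr]\bigl(-s\,h(s)\bigr)\,ds$. Both the kernel and the forcing are positive for $a<s<r$, so $u>0$ on $(a,\rho]$ at once.
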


\begin{proof}
Set $h=J-\gamma$ and $u=U-\gamma$. On the annulus
$\Delta J>0$ by \eqref{eq:plane}, so
$t\mapsto h(e^t)$ is strictly convex, with endpoint values
$h(a)=0$ and $h(R-)=+\infty$, the latter from the
positive outer-circle mass. Also
\[
(1-\Delta)u=h,\qquad u(a)=u'(a)=u(R)=0.
\]
Let $v$ solve $v''+v'/r-v=0$ with $v(R)=0$ and $v'(R)=-1$; the
one-dimensional maximum principle shows $v>0$ on $[a,R)$.
The Wronskian identity
\[
\bigl[r(vu'-v'u)\bigr]'=-rvh
\]
has integrable right-hand side, since $h$ has only a logarithmic
singularity at $R$. Integrating and using the endpoint data and the
finite trace of $U'$ gives $\int_a^R rvh\,dr=0$.
By strict convexity in $\log r$, $h$ is either positive
throughout or negative first and positive after one crossing, and
the zero integral rules out the first possibility. Every partial
integral $\int_a^r sv(s)h(s)\,ds$, $r<R$, is therefore negative.
So $(u/v)'>0$, and since $u(a)=0$ we get $u>0$ on the annulus.
Equality on the
support with the equilibrium inequality proves optimality after
normalization, and uniqueness follows from strict positive definiteness.
\end{proof}

\subsection{Equilibrium construction}

\begin{theorem}\label{thm:disk}
Fix $R>R_2$. For every trial $0<a<R$, form
\eqref{eq:profiles}--\eqref{eq:potentials} and put
\[
k(a,R)=\frac{1-F_a(0)}{H_{a,R}(0)}>0.
\]
There is a unique $a\in(0,R)$ satisfying
\begin{equation}\label{eq:selection}
F_a(R)+k(a,R)H_{a,R}(R)=1,\qquad
g_1(a)-k(a,R)g_2(a)\ge0.
\end{equation}
With $k=k(a,R)$ and $g=g_1-kg_2$, define
\begin{equation}\label{eq:diskmass}
M=k+2\pi\int_0^a g(t)\,dt.
\end{equation}
The maximizing probability measure and maximum diversity are
\begin{equation}\label{eq:diskmeasure}
\mu_R=\frac{\mathcal A_ag(|x|)}{M}\indicator{B_a}(x)\,dx
+\frac{k}{M}\sigma_R,\qquad
\Dmx{\overline B_R}=M.
\end{equation}
Its equilibrium level is $M^{-1}$, and $0<k/M<1$.
Its restriction to $B_R$ is absolutely continuous, and its density
is real analytic and strictly positive in $B_a$.
The Neumann approximations satisfy the
error bound \eqref{eq:error}.
\end{theorem}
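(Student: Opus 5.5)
The proof runs in two directions. \emph{Existence} is inherited from the support classification. \emph{Uniqueness} of the selected $a$ comes from the equilibrium criterion.

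First, existence. Since $R>R_2$, Proposition~\ref{prop:circle} excludes $\mu_R=\sigma_R$. Proposition~\ref{prop:support} then gives the true inner radius $a_*\in(0,R)$, outer mass $q>0$, and a radial density $f_R$ on $B_a$ with $\mu_R(\partial B_{a_*})=0$. Rescale by $1/\gamma$, so the potential equals one on $\overline B_{a_*}$, and set $k=q/\gamma$.

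On $B_{a_*}$ the identity $(1-\Delta)U=J$ gives $J=\gamma$, so $L*(f_R\,dx)=\gamma-qj_R$ there. I would identify $f_R/\gamma$ with $\mathcal A_{a_*}(g_1-kg_2)=\phi_{a_*}-k\psi_{a_*,R}$. This needs uniqueness for the equation $L*(f\,dx)=v$ on the disk within the class of radial densities bounded by $C(a^2-r^2)^{-1/2}$. I would try to get it through the final step of Lemma~\ref{lem:abel}: apply $\mathcal T$ and use the injectivity of $I-\mathcal P_a$.

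The main obstacle is this identification. Proposition~\ref{prop:support} does not by itself supply the edge bound on $f_R$. If the bound is unavailable, I would argue the other way. Construct the candidate measure $\nu=\mathcal A_ag\,dx+k\sigma_R$ directly and verify the equilibrium criterion. Uniqueness of the minimizer then identifies $\nu/M$ with $\mu_R$ at the true radius, which bypasses the need for a priori regularity.

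Second, the contact conditions. For the candidate $\nu$, the discrepancy $u=F_a+kH_{a,R}-1$ satisfies $(1-\Delta)u=0$ in $B_a$, since its $L$-potential equals one there. By radiality, $u=cI_0$. The choice $k=k(a,R)$ forces $u(0)=0$, so $U\equiv1$ on $\overline B_a$. Note that $k>0$ by Lemma~\ref{lem:comparison}. The first relation in \eqref{eq:selection} is exactly outer contact. The second relation, by Lemma~\ref{lem:positivity}, is equivalent to $\mathcal A_ag\ge0$, and then the density is strictly positive on $[0,a)$. Lemma~\ref{lem:abel} gives continuity of $J$ at $a$ with value one and $U\in C^1$. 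Lemma~\ref{lem:annulus} then yields $U>1$ on the annulus and shows that $\nu/M$ is the equilibrium measure.

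Third, the masses and the level. The total mass is $k+2\pi\int_0^a r\,\mathcal A_ag\,dr$, and the Abel identity from Lemma~\ref{lem:abel} at $\xi=0$ converts the second term into $2\pi\int_0^a g$. This gives \eqref{eq:diskmass}. The normalized potential is $1/M$ on the support, so the level is $M^{-1}$, $\Dmx{\overline B_R}=M$, and $0<k/M<1$ because the interior mass is positive.

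Fourth, existence and uniqueness of $a$. Any $a$ satisfying \eqref{eq:selection} produces an equilibrium measure by the second step. Uniqueness of the equilibrium, together with the fact that the support is $\overline B_a\cup\partial B_R$ with $a$ read off the support, shows that at most one $a$ satisfies \eqref{eq:selection}. For existence, the first step shows that the true $a_*$ satisfies both conditions: outer contact holds because $U(R)=\gamma$ by Lemma~\ref{lem:boundary}, and the sign condition holds because $f_R\ge0$.

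The remaining claims follow quickly. Absolute continuity holds because the only singular part of $\mu_R$ lives on $\partial B_R$. Real analyticity of the density comes from \eqref{eq:density}, and strict positivity from Lemma~\ref{lem:positivity}. The Neumann error bound is \eqref{eq:error} applied to $g_1$ and $g_2$.
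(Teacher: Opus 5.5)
Your sufficiency and uniqueness steps match the paper: the $I_0$ cancellation, Lemma~\ref{lem:positivity}, Lemma~\ref{lem:annulus}, reading $a$ off the support, and the mass computation. The gap is in \emph{existence} of a radius satisfying \eqref{eq:selection}. Your Step~4 gets it from Step~1, that is, from the identification $f_R/\gamma=\phi_{a_*}-k\psi_{a_*,R}$ with $k=q/\gamma=k(a_*,R)$. Without that identity you cannot assert $F_{a_*}(R)+k(a_*,R)H_{a_*,R}(R)=1$ or $g_1(a_*)-kg_2(a_*)\ge0$. Those are statements about the Abel-constructed potentials, not about the true $U$.

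Your fallback of building $\nu=\mathcal A_ag\,dx+k\sigma_R$ directly does not bypass this. It presupposes an $a$ that already satisfies \eqref{eq:selection}. So it shows only that such an $a$ is unique and yields the equilibrium, not that one exists. You also supply no continuity or intermediate-value argument in $a$ to replace it. Your primary route, injectivity of $I-\mathcal P_a$, needs $f_R$ written in Abel form with an a priori edge bound. As you note yourself, Proposition~\ref{prop:support} does not give that bound.

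The missing idea is an energy uniqueness argument that needs no edge information on $f_R$.

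\begin{itemize}
\item On $B_{a_*}$ one has $0\le L*(f_R\,dx)=\gamma-qj_R\le\gamma$ almost everywhere. Integrating against the absolutely continuous measure $f_R\,dx$ shows the true interior measure has finite $L$-energy.
\item The Abel solution of the same equation, $\gamma\phi_{a_*}-q\psi_{a_*,R}$, has density bounded by $C(a_*^2-r^2)^{-1/2}$. Its Coulomb potential is constant on the disk, so it has finite absolute $L$-energy.
\item Their difference $\eta$ is absolutely continuous, supported on the disk, and satisfies $L*\eta=0$ almost everywhere there. Hence $\int (L*\eta)\,d\eta=0$.
\item The strictly positive multiplier $\widehat L(\xi)=2\pi(1+|\xi|^2)^{-1/2}$ then forces $\eta=0$. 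The Fourier energy identity is justified by truncating the Gaussian superposition for $L$.
\end{itemize}

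Once $f_R=\gamma\phi_{a_*}-q\psi_{a_*,R}$, one has $U/\gamma=F_{a_*}+(q/\gamma)H_{a_*,R}$. Contact at the origin gives $q/\gamma=k(a_*,R)$. Outer contact $U(R)=\gamma$ gives the equality in \eqref{eq:selection}. Finally, $f_R\ge0$ together with $\sqrt{a^2-r^2}\,f(r)\to g(a)$ gives the inequality.
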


\begin{proof}
By Lemma~\ref{lem:comparison} the denominator and numerator
defining $k$ are positive. On the central disk
$(1-\Delta)F_a=1$ and $(1-\Delta)H_{a,R}=0$, so as regular radial
solutions
\begin{equation}\label{eq:contactdisk}
F_a(r)=1+(F_a(0)-1)I_0(r),\qquad
H_{a,R}(r)=H_{a,R}(0)I_0(r).
\end{equation}
The definition of $k$ makes the potential one throughout the disk.

For existence take the unique equilibrium, and with
Proposition~\ref{prop:support} and \eqref{eq:density}
denote its inner radius, nonnegative density, outer mass and level by
$a_R,f_R,q_R,\gamma_R$; Proposition~\ref{prop:circle} ensures
$a_R>0$. On its central disk
\[
L*(f_R\,dx)=\gamma_R-q_Rj_R
\]
almost everywhere. The interior part of the measure has finite $L$-energy,
its nonnegative potential on the disk being bounded by
$\gamma_R$, and the Abel inverse of the same right-hand side
has finite absolute $L$-energy as well, since its absolute
density is bounded by a constant times
$(a_R^2-r^2)^{-1/2}$, whose unscreened potential on the disk
is the constant $\pi^2$. Both measures are absolutely continuous,
so the difference of the two interior measures, having zero
$L$-potential almost everywhere on the disk, has zero energy, and the
strictly positive multiplier $2\pi/\sqrt{1+|\xi|^2}$ forces that
difference to vanish. For these measures the Fourier energy identity
follows by truncating the positive Gaussian superposition
\eqref{eq:gauss} for $L$. Finite absolute energy permits passage to
the limit.
Thus
\[
\frac{f_R}{\gamma_R}
=\phi_{a_R}-\frac{q_R}{\gamma_R}\psi_{a_R,R}.
\]
Contact at the origin sets $q_R/\gamma_R$ equal to $k(a_R,R)$.
Outer contact proves the scalar equality in
\eqref{eq:selection}, and nonnegativity with the Abel edge limit proves its inequality.

Conversely, for a radius satisfying \eqref{eq:selection},
the density $f=\mathcal A_a(g_1-kg_2)$ is strictly positive in the
central disk by Lemma~\ref{lem:positivity}. The finite measure
$\nu=f\,dx+k\sigma_R$ has potential one on the disk by
\eqref{eq:contactdisk} and on the outer circle
by \eqref{eq:selection}, and by
Lemma~\ref{lem:abel} its $L$-potential
has trace one from both sides of the inner edge. Being atomless,
$\nu$ has $K*\nu\in C^1$ by dominated convergence applied to the
bounded gradient of $K$. Lemma~\ref{lem:annulus} then proves
the strict annular inequality and optimality after normalization.
Two selected radii would determine the same unique probability
measure, and the strictly positive interior densities force the
radii to coincide.

Integrating the Abel formula, we obtain
$\int_{B_a}f\,dx=2\pi\int_0^a g(t)\,dt$, hence
\eqref{eq:diskmass}. On its support the normalized potential is
$M^{-1}$, so the energy is $M^{-1}$ and the diversity is $M$.
\end{proof}

\subsection{Inner edge asymptotics}

\begin{theorem}\label{thm:edge}
Fix $R>R_2$ and let
\[
\mu_R=f_R(|x|)\indicator{B_a}(x)\,dx+q_R\sigma_R,
\qquad a=a_R,
\]
be the equilibrium probability measure from
Theorem~\ref{thm:disk}, and set $\gamma_R=I(\mu_R)$.
Let $g_R=(g_1-kg_2)/M$, with $k$ and $M$ from
Theorem~\ref{thm:disk}. The density has the Abel representation
\[
f_R(r)=\frac{g_R(a)}{\sqrt{a^2-r^2}}
  -\int_r^a\frac{g_R'(t)}{\sqrt{t^2-r^2}}\,dt.
\]
Then $g_R(a)>0$. With
$C_R=g_R(a)/\sqrt{2a}$, the potentials
$U_R=K*\mu_R$ and $J_R=L*\mu_R$ satisfy
\begin{align}
f_R(a-t)&=C_Rt^{-1/2}+O(t^{1/2}),\label{eq:densityedge}\\
J_R(a+s)&=\gamma_R-4\pi C_Rs^{1/2}+O(s^{3/2}),\label{eq:Jedge}\\
U_R(a+s)-\gamma_R&=\frac{16\pi C_R}{15}s^{5/2}
  +O(s^{7/2})\label{eq:Uedge}
\end{align}
as $s,t\downarrow0$.
\end{theorem}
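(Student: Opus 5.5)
The plan is to read off all three expansions from the Abel representation of Theorem~\ref{thm:disk}, and to obtain $g_R(a)>0$ by contradiction from the annular sign condition $U_R\ge\gamma_R$. By Theorem~\ref{thm:disk}, $f_R=\mathcal A_a g_R$ with $g_R=(g_1-kg_2)/M\in C^2[0,a]$, and the selection inequality \eqref{eq:selection} gives $g_R(a)\ge0$. Lemma~\ref{lem:positivity}, applied to $g_1-kg_2$ with data $1-kj_R$ and then scaled by $1/M$, gives $-g_R'(t)>0$ on $(0,a]$; in particular $g_R'(a)<0$.

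\textbf{Density at the edge.} In \eqref{eq:abelinverse} put $r=a-t$. The first term is $g_R(a)(2at-t^2)^{-1/2}=C_Rt^{-1/2}(1+O(t))$. The second term is bounded by $\|g_R'\|_\infty\int_r^a(t'^2-r^2)^{-1/2}\,dt'=O((a-r)^{1/2})$. Together these give \eqref{eq:densityedge}.

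\textbf{Expansion of $J_R$ outside the disk.} Split $J_R$ into three parts: the outer-circle part $q_Rj_R$, which is smooth near $r=a$; the Coulomb part of the interior density; and the Hankel remainder with multiplier $\xi(1+\xi^2)^{-1/2}-1=O(\xi^{-2})$, as in the proof of Lemma~\ref{lem:abel}. For $r>a$ the Coulomb part equals $2\pi\int_0^a g_R(t)(r^2-t^2)^{-1/2}\,dt$. Differentiating in $r=a+s$ and localizing at $t=a$, where $r^2-t^2\approx2a(s+a-t)$, the derivative is
\[
-2\pi g_R(a)\,a\int_0^\infty\bigl(2a(s+\tau)\bigr)^{-3/2}\,d\tau+O(1)
=-2\pi C_Rs^{-1/2}+O(1).
\]
Integrating from the edge value $J_R(a)=\gamma_R$, which Lemma~\ref{lem:abel} supplies, gives $J_R(a+s)=\gamma_R-4\pi C_Rs^{1/2}+c_1s+O(s^{3/2})$.

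\textbf{Main obstacle: showing $c_1=0$.} For this I will expand each piece of $J_R$ near $r=a$ in the form $A(s)+|s|^{1/2}\,\mathrm{sgn}$-type terms: an analytic part $A$ common to both sides, plus terms in $s_+^{k+1/2}$ coming from $g_R(t)$ and its Taylor expansion at $t=a$. This is a standard computation with the kernels $(r^2-t^2)_+^{-1/2}$. The Hankel remainder has a kernel two orders smoother, so it contributes only to $A$ and to terms $s_+^{3/2}$ and higher. Since $J_R\equiv\gamma_R$ for $s<0$, the analytic part $A$ must be constant, so no $s^1$ term survives. I expect this matching argument to be the delicate step.

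\textbf{Expansion of $U_R$.} Put $u=U_R-\gamma_R$ and $h=J_R-\gamma_R$. Then $(1-\Delta)u=h$ in the annulus with $u(a)=u'(a)=0$; the first condition is Lemma~\ref{lem:annulus} and the second holds because $U_R\in C^1$. Hence
\[
u''=u-u'/r-h=4\pi C_Rs^{1/2}+O(s^{3/2}),
\]
and integrating twice gives $u=\tfrac{16\pi C_R}{15}s^{5/2}+O(s^{7/2})$, which is \eqref{eq:Uedge}.

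\textbf{Strict positivity of $g_R(a)$.} Suppose $g_R(a)=0$. The same localization, now applied to the next term in the expansion of $g_R$ at $a$, which has coefficient $-g_R'(a)>0$, gives $h(a+s)=\kappa s^{3/2}+o(s^{3/2})$ with $\kappa$ a positive multiple of $-g_R'(a)$. Indeed, the Abel density now vanishes like $\sqrt{a-r}$, and I will check that the sign is positive. The ODE argument above then gives $u\approx-\tfrac{4\kappa}{35}s^{7/2}<0$ just outside $a$. This contradicts $U_R>\gamma_R$ on the annulus from Lemma~\ref{lem:annulus}, so $g_R(a)>0$.
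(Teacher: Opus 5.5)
Your proposal is correct and follows the paper's route. It uses the same splitting of $J_R$ into the Coulomb part and the $C^2$ remainder $G=(L-\mathcal C)*(f_R\indicator{B_a})+q_Rj_R$, a regular part that equals $\gamma_R$ inside the disk and so kills the linear term, the one-sided tail giving the $s^{1/2}$ term (or a positive $s^{3/2}$ term in $J_R-\gamma_R$ if $g_R(a)=0$), and two integrations of the radial equation.

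The paper makes your matching step explicit. It extends $g_R$ to a $C^2$ function $\widetilde g$ on $[0,a+\delta]$ and notes that $Z(r)=2\pi\int_0^{\pi/2}\widetilde g(r\sin\theta)\,d\theta+G(r)$ is $C^2$ and equals $\gamma_R$ for $r<a$, hence $Z(a+s)=\gamma_R+o(s^2)$, while $J_R(a+s)=Z(a+s)-2\pi\int_a^{a+s}\widetilde g(t)\,((a+s)^2-t^2)^{-1/2}\,dt$. Your part $A$ cannot be claimed analytic, since $g_R$ is only known to be $C^2$, so "$A$ is constant" is not justified as stated. What you can use is that a $C^2$ function constant on one side agrees with that constant to $o(s^2)$, which is exactly what the argument needs.
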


\begin{proof}
By Lemma~\ref{lem:abel}, $g_R\in C^2[0,a]$ with one-sided
derivatives at the endpoints and $|f_R(r)|\le C(a^2-r^2)^{-1/2}$, and
Lemma~\ref{lem:positivity} shows $g_R(a)\ge0$ and $g_R'(a)<0$.
The exterior asymptotics come first, with the possibility
$g_R(a)=0$ kept open.

Put
\[
G=(L-\mathcal C)*(f_R\indicator{B_a})+q_Rj_R.
\]
Near $\partial B_a$ the function $G$ is $C^2$.
Here $L-\mathcal C=-1+|x|/2+O(|x|^2)$ near the origin, so its first
derivatives are bounded and its second derivatives are bounded by
$C/|x|$, while the density bound and \eqref{eq:nearfield},
applied with $f=f_R$, give, uniformly for small $\delta$,
\[
\sup_x\int_{|x-y|<\delta}
\frac{|f_R(y)|\indicator{B_a}(y)}{|x-y|}\,dy
\le C\delta^{1/2}.
\]
The second-derivative integrals with a continuous cutoff near the
origin are continuous. The displayed estimate bounds the omitted part
uniformly by $C\delta^{1/2}$, so their limit is continuous. The circle term is smooth
near $\partial B_a$, as $a<R$.

Extend $g_R$ to a $C^2$ function $\widetilde g$ on
$[0,a+\delta]$. By the unscreened Abel identity of
Lemma~\ref{lem:abel},
\[
\mathcal C*(f_R\indicator{B_a})(r)
=2\pi\int_0^{\min(r,a)}
\frac{g_R(t)}{\sqrt{r^2-t^2}}\,dt.
\]
Hence the $C^2$ function
\[
Z(r)=2\pi\int_0^{\pi/2}\widetilde g(r\sin\theta)\,d\theta+G(r)
\]
equals $\gamma_R$ for $r<a$, and for small $s>0$
\begin{equation}\label{eq:tail}
J_R(a+s)=Z(a+s)
-2\pi\int_a^{a+s}\frac{\widetilde g(t)}{\sqrt{(a+s)^2-t^2}}\,dt,
\qquad Z(a+s)=\gamma_R+o(s^2).
\end{equation}
If $g_R(a)=0$, substituting
$\widetilde g(t)=g_R'(a)(t-a)+O((t-a)^2)$ gives
\[
J_R(a+s)-\gamma_R
=-\frac{8\pi g_R'(a)}{3\sqrt{2a}}s^{3/2}
+o(s^{3/2}).
\]
The leading coefficient is positive. On the annulus
$u=U_R-\gamma_R$ satisfies
\[
u''+\frac1ru'-u=\gamma_R-J_R,
\qquad u(a)=u'(a)=0.
\]
Integrating twice would make $u(a+s)$ negative for small $s>0$,
contrary to Proposition~\ref{prop:support}.
So $g_R(a)>0$.

Now \eqref{eq:densityedge} follows from the Abel formula.
In \eqref{eq:tail}, the constant term in the expansion of
$\widetilde g$ contributes the leading term,
\[
2\pi\int_a^{a+s}\frac{\widetilde g(t)}{\sqrt{(a+s)^2-t^2}}\,dt
=\frac{4\pi g_R(a)}{\sqrt{2a}}s^{1/2}+O(s^{3/2}),
\]
and \eqref{eq:Jedge} follows.
Integrating the radial equation twice from $a$, with $u(a)=u'(a)=0$,
we obtain
\[
u(a+s)
=4\pi C_R\int_0^s(s-t)t^{1/2}\,dt+O(s^{7/2})
=\frac{16\pi C_R}{15}s^{5/2}+O(s^{7/2}).
\]
\end{proof}

\subsection{Threshold asymptotics}

Let $U_\partial(R)=\frac2\pi\int_0^{\pi/2}e^{-2R\sin t}\,dt$ be the
circle value of Proposition~\ref{prop:circle}, now as a function
of $R$, put $\mathcal F(R)=U_\partial(R)-e^{-R}$, which vanishes at
$R_2$, and
\[
d_*=1-e^{-R_2},
\qquad c_*=e^{-R_2}\left(1-\frac1{R_2}\right).
\]

\begin{theorem}\label{thm:threshold}
Let $m_R=\mu_R(B_{a_R})$ and define the probability $\nu_R$ on the
closed unit disk by
\[
\mu_R=m_R(T_{a_R})_\#\nu_R+(1-m_R)\sigma_R,
\qquad T_a(z)=az.
\]
Then $R_2>1$, $\mathcal F'(R_2)>0$, and, as
$\varepsilon=R-R_2\downarrow0$,
\begin{align}
m_R&=\frac{\mathcal F'(R_2)}{d_*}\varepsilon
  +o(\varepsilon),\label{eq:masslimit}\\
a_R&=\frac{\pi\mathcal F'(R_2)}{2c_*d_*}\varepsilon
  +o(\varepsilon),\label{eq:radiuslimit}\\
\nu_R&\rightharpoonup\mu_{\mathrm C}.\label{eq:limitlaw}
\end{align}
The equilibrium level and maximum diversity satisfy
\begin{align}
\gamma_R&=U_\partial(R)-\frac{\mathcal F(R)^2}{d_*}
+o(\varepsilon^2),\label{eq:energylimit}\\
\Dmx{\overline B_R}-\frac1{U_\partial(R)}
&=\frac{e^{2R_2}\mathcal F'(R_2)^2}{d_*}
\varepsilon^2+o(\varepsilon^2).
\label{eq:diversitylimit}
\end{align}
\end{theorem}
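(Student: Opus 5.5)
The plan is a matched small-parameter expansion. As $R\downarrow R_2$ the inner disk shrinks, so the central part of $\mu_R$ behaves like a point charge for the exponential kernel and like a rescaled Coulomb equilibrium disk for the $L$-kernel. Three scalar contact conditions then give $m_R$, $\gamma_R$ and $a_R$ in turn: contact at the center, contact on the outer circle, and contact for $J$ on the inner disk.

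\emph{Preliminaries.} With $\chi$ from the proof of Proposition~\ref{prop:circle}, write $\mathcal F(R)=e^{-R}(\chi(R)-1)$. Strict convexity and the crossing from below at $R_2$ give $\chi'(R_2)>0$, hence $\mathcal F'(R_2)=e^{-R_2}\chi'(R_2)>0$. For $R_2>1$ it suffices, by the same convexity, to show $\chi(1)<1$, i.e.\ $\frac2\pi\int_0^{\pi/2}e^{1-2\sin t}\,dt<1$. I would verify this with an elementary bound such as $\sin t\ge 2t/\pi$ plus a crude refinement near $t=0$, or by numerical evaluation with rigorous error. This makes $c_*>0$.

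\emph{Limits at the threshold.} Next I show $a_R\to0$ and $m_R\to0$, arguing as in Proposition~\ref{prop:width}. Suppose along a subsequence $a_R\to a_0>0$ or $m_R\not\to0$. The measures converge weakly by compactness, and the potentials converge uniformly on $\overline B_R$ since $K$ is Lipschitz. The limit therefore satisfies the variational criterion at $R_2$, so it equals $\sigma_{R_2}$ by Proposition~\ref{prop:circle}. But $\sigma_{R_2}$ has no interior mass, which contradicts $m_R\not\to0$. If instead $m_R\to0$ with $a_R\to a_0>0$, the contact $U=\gamma$ on $B_{a_0}$ passes to the limit, whereas for $\sigma_{R_2}$ the contact set is $\partial B_R\cup\{0\}$ only.

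\emph{Shape of the rescaled interior measure.} Write $f_R=\mathcal A_a g_R$ as in Theorem~\ref{thm:edge}. Here $g_R$ solves $(I-\mathcal P_a)g_R=\mathcal T(\gamma_R-q_Rj_R)$. By \eqref{eq:contraction}, $\omega(a)=O(a)\to0$, and by \eqref{eq:series} the right side equals $\pi^{-2}(\gamma_R-q_Re^{-R}/R)+O(t^2)$. Hence, uniformly on $[0,a]$ and with derivative $g_R'=O(a)$,
\[
g_R(t)=g_R(0)\bigl(1+O(a)\bigr),\qquad g_R(0)=\pi^{-2}c_R\bigl(1+O(a)\bigr),\quad c_R:=\gamma_R-q_Re^{-R}/R .
\]
The Abel formula then gives $f_R(r)=g_R(0)(a^2-r^2)^{-1/2}(1+O(a))$ in $L^1$ sense, and integrating,
\[
m_R=2\pi\int_0^a g_R=2\pi a\,g_R(0)\bigl(1+O(a)\bigr).
\]
After the rescaling $T_a$, this proves $\nu_R\rightharpoonup\mu_{\mathrm C}$, using Lemma~\ref{lem:coulomb} to identify the normalized profile. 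Since $\gamma_R\to e^{-R_2}$ and $q_R\to1$, we get $c_R\to c_*$, and so $m_R=2a_Rc_*/\pi\,(1+o(1))$. That is \eqref{eq:radiuslimit}, once $m_R$ is known.

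\emph{Outer and central contact.} With $q=1-m$, expand
\[
U(0)=m\bigl(1-O(a)\bigr)+qe^{-R},\qquad U(R)=me^{-R}\bigl(1+O(a^2)\bigr)+qU_\partial(R).
\]
The second holds because the disk average of $e^{-|Re_1-y|}$ deviates from $e^{-R}$ only at second order: the linear term integrates to zero by symmetry. Setting both equal to $\gamma$ gives
\[
m\bigl(1-e^{-R}\bigr)=(1-m)\mathcal F(R)+O(ma),
\]
so $m=\mathcal F(R)/d_*+O(\varepsilon^2)$, which is \eqref{eq:masslimit}. Then
\[
\gamma=U_\partial-m\mathcal F+O(ma^2)=U_\partial-\mathcal F^2/d_*+O(\varepsilon^3),
\]
which is \eqref{eq:energylimit}. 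Inverting and using $U_\partial(R_2)=e^{-R_2}$ yields \eqref{eq:diversitylimit}.

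The main obstacle is the uniformity in the third step. The bounds $\|g_R-g_R(0)\|_\infty=O(a)\,g_R(0)$ and the control of $\mathcal Tj_R$ and of $\mathcal P_a$ must hold uniformly as $a\to0$; note that $\mathcal P_a$ acts on a shrinking interval, so I would rescale $t=a\tau$ and track the $O(a)$ norm. I would also keep careful track of the $O(ma)$ correction in $U(0)$. It affects $m$ only at order $\varepsilon^2$, but because it multiplies $\mathcal F=O(\varepsilon)$ it must be shown not to enter $\gamma$ at order $\varepsilon^2$.
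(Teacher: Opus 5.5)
Your proposal is correct, and it departs from the paper's proof at two points. The central/outer contact equation for $m_R$ is the same as the paper's.

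\textbf{The Coulomb limit.} The paper does not use the Fredholm solution here. It expands the pointwise identity $J_R=\gamma_R$ on the central disk with $|L-\mathcal C|\le1$ and integrates against $\mu_{\mathrm C}$ to get $\frac\pi2\,m_R/a_R\to c_*$. It then deduces $\mathcal C*\nu_R\to\pi/2$ uniformly and identifies every weak limit through convergence of Coulomb energies, lower semicontinuity and the uniqueness in Lemma~\ref{lem:coulomb}.

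You instead push the representation $f_R=\mathcal A_ag_R$, $(I-\mathcal P_a)g_R=\mathcal T(\gamma_R-q_Rj_R)$, established in the proof of Theorem~\ref{thm:disk}, through \eqref{eq:contraction} with $\omega(a)\le2a/\pi$. This is more quantitative. It gives $m_R/a_R=\frac{2c_R}{\pi}(1+O(a_R))$ and total variation convergence $\nu_R\to\mu_{\mathrm C}$ with an explicit rate, and it needs no appeal to Fuglede's theory. The paper's route needs only the pointwise contact identity for $J_R$ and soft potential theory.

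The uniformity you flag needs no rescaling. The bounds $\|\mathcal P_a\|\le2a/\pi$ and $|h'|\le\pi/4$ give $g_R-\mathcal T(\gamma_R-q_Rj_R)=O(a)$ and $g_R'=O(a)$ directly.

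\textbf{The equilibrium level.} The paper expands the energy $\gamma_R=(1-m_R)^2U_\partial(R)+2m_R(1-m_R)X_R+m_R^2Y_R$ and uses the cancellation $1-2e^{-R_2}+U_\partial(R_2)=d_*$. You read $\gamma_R$ off outer contact, $\gamma_R=U(R)=U_\partial(R)-m_R\mathcal F(R)+O(m_Ra_R^2)$. This is shorter. It shows that an $O(\varepsilon^2)$ error in $m_R$ enters $\gamma_R$ only at order $\varepsilon^3$, so your closing worry is already settled. It also sharpens the remainder in \eqref{eq:energylimit} to $O(\varepsilon^3)$.

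\textbf{Two small repairs.} First, the estimate $\chi(1)<1$ must actually be carried out. Jordan's inequality alone gives only $\chi(1)\le\sinh 1>1$. The paper's two chord bounds give $U_\partial(1)<(1+e^{-1}-2e^{-2})/3<e^{-1}$.

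Second, in the limit step prove $a_R\to0$ first. Use your contact argument, justified for instance by $K*\sigma_{R_2}(r)=e^{-R_2}+\frac{c_*}4r^2+O(r^4)$ with $c_*>0$. Weak convergence to $\sigma_{R_2}$ alone does not rule out interior mass accumulating near the outer circle. Once $a_R\to0$, $m_R\to0$ follows from the portmanteau theorem.
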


\begin{proof}
The two chord bounds
\[
\sin t\ge\frac{3t}{\pi}
\quad(0\le t\le\pi/6),\qquad
\sin t\ge\frac14+\frac{3t}{2\pi}
\quad(\pi/6\le t\le\pi/2)
\]
give
\[
U_\partial(1)<\frac{1+e^{-1}-2e^{-2}}3<e^{-1}.
\]
The last inequality uses $e<1+\sqrt3$.
The unique positive crossing from
Proposition~\ref{prop:circle} thus satisfies $R_2>1$.
Also $\chi(R)=e^RU_\partial(R)$ is strictly convex, with
$\chi(0)=\chi(R_2)=1$ and $\chi'(0)<0$, whence
$\chi'(R_2)>0$ and
$\mathcal F'(R_2)=e^{-R_2}\chi'(R_2)>0$.
In particular $c_*,d_*>0$.

Let $R_j\downarrow R_2$. All $\mu_{R_j}$ are supported in a fixed
compact disk, so a subsequence converges weakly to a probability
$\eta$ supported on $\overline B_{R_2}$. For every probability
$\nu$ on $\overline B_{R_2}$, let $T_jx=(R_j/R_2)x$.
Minimality and continuity of the energy kernel give
\[
I(\eta)=\lim_j I(\mu_{R_j})
\le\lim_j I((T_j)_\#\nu)=I(\nu).
\]
Uniqueness at $R_2$ implies $\eta=\sigma_{R_2}$. Every
subsequential limit is therefore the same, so
$\mu_R\rightharpoonup\sigma_{R_2}$ as $R\downarrow R_2$.
By their common Lipschitz bound the potentials converge uniformly on
compact sets. Hence $\gamma_R\to e^{-R_2}$.

The limiting circle potential has the expansion
\[
K*\sigma_{R_2}(r)
=e^{-R_2}+\frac{c_*}{4}r^2+O(r^4).
\]
A positive subsequential limit of $a_R$ would make this potential
constant on a nonempty disk, contradicting $c_*>0$.
So $a_R\to0$. With the central disk shrinking to the origin, weak
convergence to $\sigma_{R_2}$ implies $m_R\to0$ too.

By Lemma~\ref{lem:abel} the identity $J_R=\gamma_R$ holds
pointwise on the central disk, and since
$|e^{-u}-1|\le u$ and the circle's $L$-potential is smooth and radial
near the origin, it gives, uniformly for $|z|<1$,
\begin{equation}\label{eq:coulomblimit}
\frac{m_R}{a_R}(\mathcal C*\nu_R)(z)
=\gamma_R-(1-m_R)\frac{e^{-R}}R+O(m_R+a_R^2).
\end{equation}
Integrate against $\mu_{\mathrm C}$, using
Lemma~\ref{lem:coulomb} and Tonelli's theorem, to get
\[
\frac\pi2\frac{m_R}{a_R}
=\gamma_R-(1-m_R)\frac{e^{-R}}R+O(m_R+a_R^2)
\longrightarrow c_*.
\]
Therefore $m_R/a_R\to2c_*/\pi$, and
by \eqref{eq:coulomblimit}
$\mathcal C*\nu_R\to\pi/2$ uniformly on the open unit disk.
The measures $\nu_R$ have zero mass on its boundary, so
$E_{\mathrm C}(\nu_R)\to\pi/2$. Lower semicontinuity of Coulomb
energy \cite[Lemma~2.2.1(e)]{fuglede1960} and uniqueness in
Lemma~\ref{lem:coulomb} imply that every weak limit equals
$\mu_{\mathrm C}$. That is \eqref{eq:limitlaw}.

For the rates let
\[
A_R=\int e^{-a_R|z|}\,d\nu_R(z),\qquad
X_R=\int (K*\sigma_R)(a_R|z|)\,d\nu_R(z).
\]
The contact equations at the center and on the outer circle yield
\[
(1-m_R)\mathcal F(R)=m_R(A_R-X_R).
\]
Here $A_R=1+O(a_R)$ and $X_R=e^{-R}+O(a_R^2)$,
so $m_R/\mathcal F(R)\to1/d_*$, and the simple zero of $\mathcal F$
with the limit of $m_R/a_R$ proves
\eqref{eq:masslimit} and \eqref{eq:radiuslimit}.

Finally put
\[
Y_R=\iint e^{-a_R|z-w|}\,d\nu_R(z)\,d\nu_R(w)=1+O(a_R).
\]
The energy decomposes as
\[
\gamma_R=(1-m_R)^2U_\partial(R)
+2m_R(1-m_R)X_R+m_R^2Y_R.
\]
Since $a_R,m_R=O(\varepsilon)$, it becomes
\[
\gamma_R=U_\partial(R)-2m_R\mathcal F(R)
+m_R^2[1-2e^{-R}+U_\partial(R)]+O(\varepsilon^3).
\]

Substituting $m_R=\mathcal F(R)/d_*+o(\varepsilon)$ and
$1-2e^{-R_2}+U_\partial(R_2)=d_*$ gives
\eqref{eq:energylimit}, and taking reciprocals, with
$U_\partial(R_2)=e^{-R_2}$, leads to
\eqref{eq:diversitylimit}.
\end{proof}

Together with the boundary formula in
Proposition~\ref{prop:circle}, \eqref{eq:diversitylimit}
shows that $\Dmx{\overline B_R}$ is differentiable at $R_2$.
Its difference from the circle magnitude begins at second order
above the threshold.

\section{Discussion}\label{sec:disc}

How many interior support components can occur in dimensions $N\ge4$?
The outer sphere still carries positive mass, with an open annulus
separating it from any interior support (Lemma~\ref{lem:boundary}). We had hoped to extend the same support argument to higher dimensions.
The interior contact bound $J\le\gamma$ persists
(Lemma~\ref{lem:contact}), but $\Delta L$ in
\eqref{eq:laplacian} changes sign for $N\ge4$. The kernel calculation therefore no longer provides the subharmonicity used above to exclude mass-free regions between interior contact radii.
See Appendix~\ref{app:tables} for numerical experiments performed for $N=4,5$.

Maximum diversity is continuous in Gromov--Hausdorff distance on compact
positive definite metric spaces \cite{meckes2013}.
For a nonempty compact set $E\subset\R^N$ and a finite $h$-net
$E_h\subset E$,
\[
\Dmx{E_h}\le\Dmx{E}\le e^{2h}\Dmx{E_h}.
\]
A measurable nearest point map $T:E\to E_h$ satisfies
$|Tx-Ty|\ge |x-y|-2h$, so pushing a probability measure forward
increases its energy by at most a factor $e^{2h}$.
Minimizing and taking reciprocals proves the right inequality, and
inclusion proves the left.
With rigorous bounds for the finite optimization the estimate bounds
the maximum diversity value. Support transitions still require control
of the potential between the sampled points.

Leinster and Roff also ask about cubes.
On a box with the $\ell_1$ metric the exponential kernel factorizes,
and the positive weight measure is a product of interval weight
measures \cite{leinstermeckes2017survey}.
For the Euclidean metric, the proof of
Lemma~\ref{lem:contact} uses only that $U\ge\gamma$ near the
contact point, so $J\le\gamma$ at interior contact on any compact set,
and for $N=2,3$ the function $J$ is strictly subharmonic off of the support
whatever the set. The cube lacks the radial control of $J$ up to the boundary of a
mass-free region (Lemma~\ref{lem:traces}) and the radial monotonicity that
puts the whole boundary in the support (Lemma~\ref{lem:boundary}),
and its support is yet to be determined.

Does the interior support component remain separated from the outer boundary under
a small smooth perturbation of a three-dimensional ball with
$R>R_3$? A perturbed construction would require constant
potential on an unknown interior region and nonnegative surface masses on both
boundaries, with the equilibrium inequality between them. The potential excess
must be controlled up to the moving inner boundary, where it vanishes.

In the planar construction the contraction factor tends to one as the
inner radius grows. Direct Neumann iteration gets slow on large
disks, and the inverse square root edge law suggests putting the singular
factor into the numerical basis. Quantitative error bounds for the
quadrature and scalar root are needed as the inner radius grows.
The planar large-radius asymptotics also remain to be determined.

\backmatter

\section*{Declarations}

\subsection*{Funding}
No external funding was received for this work.

\subsection*{Competing interests}
The author declares no competing interests.

\subsection*{Ethics approval and consent to participate}
Not applicable.

\subsection*{Consent for publication}
Not applicable.

\subsection*{Data availability}
No external datasets were used.

\subsection*{Materials availability}
Not applicable.

\subsection*{Code availability}
Available upon request.

\subsection*{Author contribution}
Adam Dajani is the sole author and is responsible for the research and manuscript.

\subsection*{Use of generative AI tools}
No generative AI was used in this work.

\begin{appendices}
\section{Proof of analyticity}\label{app:analyticity}

\begin{proof}[Proof of Lemma~\ref{lem:analyticity}]
Set $g=H-c$, a tempered distribution given by a locally
integrable function vanishing almost everywhere in $B_a$; since
$\mathcal B(c)=c$ it suffices to compute $\mathcal Bg$ there.
Against a test function compactly supported in $B_a$ the first term of
\eqref{eq:semigroup} pairs to zero with $g$, and the test support
has positive distance from $\R^2\setminus B_a$, so Fubini in the
remaining heat integral yields
\[
\mathcal Bg(x)=-\int_{\R^2\setminus B_a}g(y)n(x-y)\,dy.
\]
The interchange is absolutely convergent because $n$
is bounded for $|z|\ge\delta>0$ and has an integrable exponential
tail, and $|g(y)|\le |H(y)|+|c|$.

For analyticity we extend this integral to a complex neighborhood.
Fix $x_0\in B_a$ and choose $\delta>0$ with
$\dist(x_0,\R^2\setminus B_a)\ge\delta$.
For $z\in\C^2$ with $|z-x_0|<\delta/4$ and real $y\notin B_a$
use the principal square root in
$d(z,y)=\sqrt{(z-y)\cdot(z-y)}$, with the complex bilinear dot product.
With $u=x_0-y$ the expression under the root differs from $|u|^2$
by less than $(9/16)|u|^2$ and stays in the right half-plane, and the
same estimate yields $\Re d(z,y)\ge (\sqrt7/4)|u|$ and
$|d(z,y)|\ge(\sqrt7/4)|u|$.
Multiplied by $|H(y)|+|c|$, the holomorphic kernel
$(1+d)e^{-d}/(2\pi d^3)$ has an integrable majorant, uniform on every
smaller complex ball. The integral in
\eqref{eq:analyticity} is consequently holomorphic there. Its
restriction to the real ball is real analytic.
\end{proof}

\section{Numerical values and experiments}\label{app:tables}

The left table is for $\R^3$
(Remark~\ref{rem:limits}) and the right one for the plane. All values in those two tables are rounded to four decimal
places. The planar values use
Chebyshev--Nystr\"om discretization of \eqref{eq:profiles} with 24, 36 and 48 nodes, graded angular quadrature, and a bracketed
solve of \eqref{eq:selection}. The computed potential agrees with $\gamma$ at five sampled central radii and on the outer circle, and exceeds it at nineteen sampled interior annular radii.

\begin{center}
\small
\begin{tabular}[t]{@{}crrr@{}}
\multicolumn{4}{@{}c@{}}{$N=3$}\\
\midrule
$R$ & $b$ & $\Dmx{\overline B_R}$ & $\Mag(\overline B_R)$\\
\midrule
3 & 2.4720 & 18.3297 & 20.5000\\
4 & 2.1334 & 32.6738 & 35.6667\\
5 & 1.9931 & 52.8648 & 56.8333\\
6 & 1.9115 & 79.9081 & 85.0000\\
7 & 1.8573 & 114.8054 & 121.1667\\
8 & 1.8184 & 158.5576 & 166.3333\\
9 & 1.7890 & 212.1648 & 221.5000\\
10 & 1.7659 & 276.6275 & 287.6667\\
\botrule
\end{tabular}\hspace{2.5em}%
\begin{tabular}[t]{@{}crrr@{}}
\multicolumn{4}{@{}c@{}}{$N=2$}\\
\midrule
$R$ & $a_R$ & $\Dmx{\overline B_R}$ & $\mu_R(\partial B_R)$\\
\midrule
1.4 & 0.0720 & 4.0153 & 0.9965\\
1.5 & 0.2396 & 4.3106 & 0.9855\\
2 & 0.8868 & 5.9325 & 0.9175\\
3 & 1.9861 & 9.9155 & 0.7829\\
\botrule

\end{tabular}
\end{center}

\begin{center}
\small
\begin{tabular}{@{}lrr@{}}

\multicolumn{3}{@{}c@{}}{Numerical testing for $N = 4,5$}\\
\midrule
Tested quantity & $N=4$ & $N=5$\\
\midrule
First interior sphere (boundary threshold $R_N$) & 4.103 & 5.483\\
Second interior sphere & 4.7925 & 6.8032\\
Third interior sphere & 4.9081 & 7.2158\\
Fourth interior sphere & 4.9272 & 7.3493\\
Fifth interior sphere & 4.9303 & 7.3922\\
Sixth interior sphere & -- & 7.4059\\

Apparent accumulation & 4.931 & 7.412\\
The shrink factor of subsequent birth intervals & 6 & 3.1\\
\botrule
\end{tabular}
\end{center}

For $N=4,5$ we solved the contact and stationarity equations for
finitely many spheres and followed the solutions in $R$.
Potential excess was evaluated at 1,000 radial samples and minimized
locally near sampled minima, with acceptance tolerance $-10^{-15}$.
The computations indicate that boundary measure is optimal up to the thresholds in the table,
the radii at which the potential of $\sigma_R$ at
the center, $e^{-R}$, falls to its value $U_\partial(R)$ on
$\partial B_R$. Just above each threshold the interior mass sits on a
single sphere whose radius grows from zero.

Further spheres appear at the center, and a minimization on a grid of 800 radii, without continuation, finds the
same number of spheres between successive birth radii. The intervals between successive birth radii shrink by the factors in the table, so the births appear to accumulate near the
listed accumulation radii. Up to those radii the computed support is a finite union of concentric spheres whose number grows with the radius. At larger radii the discrete minimizers spread mass over a central region. We were unable to distinguish concentric spheres from a central ball.

\end{appendices}

\bibliography{references}

\end{document}